\newcommand{\R}{{\mathbb R}}
\newcommand{\E}{{\mathbb E}}
\newcommand{\N}{{\mathbb N}}
\newcommand{\Aa}{{\mathcal A}}
\newcommand{\Bb}{{\mathcal B}}
\newcommand{\Cc}{{\mathcal C}}    %configuration space
\newcommand{\Dd}{{\mathcal D}}
\newcommand{\Ff}{{\mathcal F}}
\newcommand{\Hh}{{\mathcal H}}
\newcommand{\Kk}{{\mathcal K}}
\newcommand{\Ll}{{\mathcal L}}    %Loop space
\newcommand{\Mm}{{\mathcal M}}    %moduli space
\newcommand{\Pp}{{\mathcal P}}
\newcommand{\Ss}{{\mathcal S}}
\newcommand{\Uu}{{\mathcal U}}
\newcommand{\Xx}{{\mathcal X}}
\newcommand{\Yy}{{\mathcal Y}}
\newcommand{\Zz}{{\mathcal Z}}
\newcommand{\Om}{{\Omega}}
\newcommand{\om}{{\omega}}
\newcommand{\eps}{{\varepsilon}}
\newcommand{\pb}{{\mathbf p}}
\renewcommand{\a}{{\mathfrak a}}
\def\NABLA#1{{\mathop{\nabla\kern-.5ex\lower1ex\hbox{$#1$}}}}
\def\Nabla#1{\nabla\kern-.5ex{}_#1}
\newcommand{\la}{\langle}
\newcommand{\ra}{\rangle}
\begin{document}
\title{Probabilistic morphisms and Bayesian nonparametrics}

\author{J\"urgen Jost\inst{1} \and  H\^ong V\^an L\^e  \inst{2}\footnote{Research  of HVL was supported  by  GA\v CR-project 18-01953J and	 RVO: 67985840}
	\and Tat Dat  Tran \inst{1, 3}% etc
% \thanks is optional - remove next line if not needed
%\thanks{Research  of HVL was supported  by  GA\v CR-project 18-01953J and	 RVO: 67985840}%\emph{Present address:} Insert the address here if needed}%
}                     % Do not remove
%
%\offprints{}          % Insert a name or remove this line
%
\institute{Max-Planck-Institute for Mathematics in the Sciences,
	Inselstrasse 22, 04103 Leipzig, Germany \and Institute  of Mathematics of the Czech Academey of Sciences,
	Zitna 25, 11567  Praha 1, Czech Republic \and Mathematisches Institut, Universit\"at Leipzig, Augustusplatz 10, 04109 Leipzig, Germany}
\date{Received: date / Revised version: date}
% The correct dates will be entered by Springer
%
\abstract{
In this paper  we  develop  a  functorial language   of
probabilistic morphisms   and  apply  it to some basic  problems  in Bayesian
nonparametrics.
First  we extend and  unify   the Kleisli category of probabilistic  morphisms proposed by  Lawvere and  Giry  with the  category of   statistical models   proposed by Chentsov  and Morse-Sacksteder. Then we introduce  the notion of  a Bayesian statistical model  that formalizes  the notion  of a parameter space with a given prior distribution in Bayesian  statistics.  {We revisit  the   existence  of a posterior   distribution, using  probabilistic morphisms}. In particular, we give an explicit   formula for posterior  distributions of  the  Bayesian  statistical model,  assuming  that  the underlying parameter space
is a  Souslin space  and   the   sample space     is a  subset  in a complete  connected  finite dimensional  Riemannian  manifold.  Then we   give  a new  proof  of  the existence   of  Dirichlet
measures over      any measurable  space   using  a  functorial  property  of the   Dirichlet map constructed by Sethuraman. 
\PACS{{02.50.Tt}{Inference methods}}
}

\maketitle
\section{Introduction}\label{sec:intr}

Statistical inference is a basic tool for essentially all sciences when one wants to detect some structure behind observed data from some measurable sample space $\Xx$ and make predictions about future observations. Therefore, many useful techniques for making inferences have been developed. Beyond this practical aspect, of course, one also seeks a deeper understanding of the underlying principles. Formal methods for that have been developed in various disciplines, including statistical physics,   in particular,  using tools from category theory. We shall therefore now briefly recall the history of the categorical  approach  to mathematical    statistics, to the extent that it is relevant for our work.

When Lawvere  \cite{Lawvere1962} developed a categorical  approach  to mathematical    statistics, he  introduced the name {\it probabilistic mappings} for Markov kernels.   Lawvere  and   later  Giry \cite{Giry1982} proved  important  functorial  properties
of probabilistic  mappings.  Almost      at  the same time      Morse-Sacksteder \cite{MS1966} and Chentsov \cite{Chentsov1965}   developed  independently  categorical approaches   for  mathematical statistics,  namely the category of statistical models.  
The purpose  of our   paper   is to  unify     their approaches  and develop
the  functorial language  of probabilistic  mappings, renamed  in this paper as probabilistic morphisms  following a suggestion by Juan Pablo  Vigneaux, for  dealing with   important problems  in Bayesian nonparametrics.
In  particular,  we introduce the notion of a Bayesian statistical model  (Definition \ref{def:bstatmodel}) and  express the existence   of  posterior  distributions of the Bayesian statistical model in terms of    the equivalence between statistical models (Proposition \ref{prop:post}), generalizing  Parzygnat's theorem \cite[Theorem 2.1]{Parzygnat2020} for finite  sets.  We   also   prove  a  new  formula  for these posterior
distributions  without  the    dominated measure  condition (Theorem~\ref{thm:bayesm}).

Recall that a Souslin space  is a Hausdorff  space admitting   a  surjective     continuous   mapping    from a  complete  separable  metrizable space, i.e., a  Polish space. In particular, every Polish space   is  a Souslin   space,   and more generally, every    Borel  subset in a Souslin  space is a  Souslin  space \cite[Corolary 6.6.7, p. 22, vol.2]{Bogachev2007}.

\begin{remark}\label{rem:altsoulsin}  1) A Souslin space  need not  be  metrizable, see  e.g. \cite[Example 6.10.78, p. 64, vol. 2]{Bogachev2007}.

2) Every  topological space $\Xx$   has a structure  of   a  measurable  space, also denoted by $\Xx$, whose $\sigma$-algebra is  the Borel $\sigma$-algebra  $\Bb(\Xx)$. In \cite[Definition 16, p.46-III]{DM1978}  Dellacherie and Meyer  called  a measurable space  $(\Xx,\Sigma_{\Xx})$  Souslin,  if  $(\Xx,\Sigma_{\Xx})$ is isomorphic to   a measurable space   $(\Hh, \Bb (\Hh))$  where  $\Hh$ is a Souslin {\it metrizable} space.  Here,    we  call such an $(\Xx,\Sigma_{\Xx})$  a  {\it Souslin measurable space}. In \cite[Theorem 68,  p. 76-III]{DM1978}  Dellacherie and Meyer   proved  that, if $\Xx$  is a Souslin space, then $(\Xx, \Bb (\Xx))$  is a  Souslin measurable space. 

3) Since  any Borel  subset  in a Souslin space  is a Souslin  space,  any standard  Borel space (a measurable space admitting a bijective, bimeasurable correspondence with a Borel subset of a Polish space)  is   a Souslin  measurable  space.
\end{remark}

Our main theorem is then stated as follows:

\begin{theorem}[Main  Theorem]\label{thm:bayesm}
	Suppose   that  $\Xx$ is a  subset of   a connected complete finite dimensional     Riemannian  manifold $(M^n, g)$ with  the induced  metric   generated by $g$ on $M^n$,  and $(\Theta, \mu_\Theta, \pb , \Xx)$ is a Bayesian statistical model, where 
	$\Theta$ is  a Souslin space.  For    any $k \in \N ^+$ and ${\bf x}= (x_1, \cdots,  x_k)\in \Xx^k$
	let $D_r(x_i) $ denote the open  ball of radius $r$ centered at $x_i\in \Xx$.  For $\theta \in \Theta$ we let $p_\theta: = \pb (\theta) \in \Pp (\Xx)$. Let $\mu_\Xx : = \pb_* (\mu_\Theta)\in \Pp (\Xx)$ be the marginal  measure on $\Xx$.
	Then  there  exists a  measurable  subset  $S  \subset \Xx$  of zero $\mu_\Xx$-measure  independent  of ${\bf x}$ and
	a family of   posterior    distributions $\mu_{\Theta|\Xx} (\cdot|{\bf x})$  on $\Theta$ after seeing   data ${\bf x} \in \Xx^k$  
	such that
	\begin{equation}\label{eq:updatem}
	\mu_{\Theta|\Xx}(B|{\bf x}) = \lim_{ r\to 0}  \frac{ \int _B \Pi_{i =1}^k p_\theta (D_r (x_i)) d\mu_\Theta}{\int_\Theta\Pi_{i=1}^k  p_\theta (D_r (x_i)) d\mu_\Theta}
	\end{equation}
	for any $B$ in the Borel $\sigma$-algebra $\Bb(\Theta) = \Sigma_\Theta$, and 
	for    any  ${\bf x}\in (\Xx\setminus S)^k$.
	For ${\bf x}\in  \Xx^k \setminus  (\Xx\setminus S)^k$ we   take  an arbitrary value for  $\mu_{\Theta|\Xx} (\cdot | {\bf  x}) \in \Pp (\Theta) $.  
\end{theorem}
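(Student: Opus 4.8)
The plan is to read off the right-hand side of \eqref{eq:updatem} as a Radon--Nikodym derivative and to produce the limit by a Lebesgue--Besicovitch differentiation argument, the one genuinely new point being that no dominating measure is assumed, so the marginal $\mu_\Xx$ must itself serve as the reference measure. Concretely, the model $(\Theta,\mu_\Theta,\pb,\Xx)$ yields, for each fixed $k$, a joint measure on $\Theta\times\Xx^k$ by $\Lambda(A\times C)=\int_A p_\theta^{\otimes k}(C)\,d\mu_\Theta(\theta)$, whose $\Xx^k$-marginal is the evidence measure $m^{(k)}(C)=\int_\Theta p_\theta^{\otimes k}(C)\,d\mu_\Theta$; for fixed $B\in\Bb(\Theta)$ the set function $\Lambda_B(C)=\int_B p_\theta^{\otimes k}(C)\,d\mu_\Theta$ satisfies $\Lambda_B\ll m^{(k)}$. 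Existence of the posterior as a regular disintegration (Proposition~\ref{prop:post}, where the Souslin hypothesis on $\Theta$ is used) characterises it by $\mu_{\Theta|\Xx}(B\mid\mathbf{x})=\tfrac{d\Lambda_B}{dm^{(k)}}(\mathbf{x})$ for $m^{(k)}$-a.e.\ $\mathbf{x}$. Since $D_r(\mathbf{x}):=\prod_{i=1}^k D_r(x_i)$ is a product ball of equal radius, the numerator and denominator in \eqref{eq:updatem} are precisely $\Lambda_B(D_r(\mathbf{x}))$ and $m^{(k)}(D_r(\mathbf{x}))$, so the theorem is equivalent to the differentiation statement $\Lambda_B(D_r(\mathbf{x}))/m^{(k)}(D_r(\mathbf{x}))\to \tfrac{d\Lambda_B}{dm^{(k)}}(\mathbf{x})$ taken along these product balls. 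To keep the argument reference-measure free I would divide numerator and denominator by the $\theta$-independent factor $\mu_\Xx^{\otimes k}(D_r(\mathbf{x}))=\prod_i\mu_\Xx(D_r(x_i))$; because $\mu_\Xx=\int_\Theta p_\theta\,d\mu_\Theta$, one has $p_\theta\ll\mu_\Xx$ for $\mu_\Theta$-a.e.\ $\theta$, with a jointly measurable density $f_\theta=\tfrac{dp_\theta}{d\mu_\Xx}$, and the rescaled integrand becomes $\prod_i\tfrac{p_\theta(D_r(x_i))}{\mu_\Xx(D_r(x_i))}=\prod_i A_r f_\theta(x_i)$, where $A_r g(x)=\mu_\Xx(D_r(x))^{-1}\int_{D_r(x)}g\,d\mu_\Xx$ is the $\mu_\Xx$-averaging operator on $M^n$.

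Next I would establish the pointwise input with the null set already of product form. For each fixed $\theta$ the ordinary Lebesgue--Besicovitch differentiation theorem on $M^n$ (valid for small geodesic balls) gives $A_r f_\theta(x)\to f_\theta(x)$ for $x$ outside a $\mu_\Xx$-null set $N_\theta$. Applying Fubini to $\{(\theta,x):x\in N_\theta\}\subset\Theta\times\Xx$, which is $(\mu_\Theta\otimes\mu_\Xx)$-null, produces a single $\mu_\Xx$-null set $S\subset\Xx$ such that for every $x\in\Xx\setminus S$ one has $A_r f_\theta(x)\to f_\theta(x)$ for $\mu_\Theta$-a.e.\ $\theta$; enlarging $S$ by the $\mu_\Xx$-null complement of the support, I may also assume $\mu_\Xx(D_r(x))>0$ for all small $r$ when $x\notin S$. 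Thus for $\mathbf{x}\in(\Xx\setminus S)^k$ the integrand $\prod_i A_r f_\theta(x_i)$ converges, as $r\to 0$, to $\prod_i f_\theta(x_i)$ for $\mu_\Theta$-a.e.\ $\theta$ (a finite intersection of full-measure sets). It then remains to pass this convergence through $\int_B(\cdot)\,d\mu_\Theta$ and $\int_\Theta(\cdot)\,d\mu_\Theta$, after which the limits are $\tfrac{d\Lambda_B}{d\mu_\Xx^{\otimes k}}(\mathbf{x})=\int_B\prod_i f_\theta(x_i)\,d\mu_\Theta$ and $\tfrac{dm^{(k)}}{d\mu_\Xx^{\otimes k}}(\mathbf{x})=\int_\Theta\prod_i f_\theta(x_i)\,d\mu_\Theta$, whose quotient is $\tfrac{d\Lambda_B}{dm^{(k)}}(\mathbf{x})=\mu_{\Theta|\Xx}(B\mid\mathbf{x})$ wherever the denominator is positive. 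To get a bona fide probability measure $\mu_{\Theta|\Xx}(\cdot\mid\mathbf{x})\in\Pp(\Theta)$ rather than a statement for each individual $B$, I would run this for a fixed countable algebra generating $\Bb(\Theta)$ (available since $\Theta$ is Souslin, hence countably generated), intersect the corresponding sets $S$, and extract a regular version by the standard monotone-class completion; off $(\Xx\setminus S)^k$ the value is set arbitrarily, as in the statement.

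The main obstacle is the limit-interchange in the last step, which is genuinely a Lebesgue differentiation theorem for \emph{equal-radius product balls} against the product measure $\mu_\Xx^{\otimes k}$ on the noncompact product manifold $(M^n)^k$, and which must moreover come with a product-shaped exceptional set. The difficulty is twofold. First, $\mu_\Xx$ is an arbitrary Radon measure, so $\mu_\Xx^{\otimes k}$ need not be doubling and the Hardy--Littlewood route is unavailable; one must instead invoke a Besicovitch covering property for the $\ell^\infty$-product (``cylinder'') metric on $(M^n)^k$ at small scales, or equivalently dominate $\sup_r\prod_i A_r f_\theta(x_i)$ well enough to upgrade the a.e.\ convergence that is transparent on the dense class of tensor densities $\prod_i\psi_i$ to arbitrary densities, while controlling uniform integrability in $\theta$. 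The usual strong-maximal obstruction is sidestepped precisely because all $k$ radii are equal, but this has to be verified rather than quoted. Second, one must ensure that the denominator limit $\int_\Theta\prod_i f_\theta(x_i)\,d\mu_\Theta$ is strictly positive on $(\Xx\setminus S)^k$: a priori it can vanish on a $\mu_\Xx^{\otimes k}$-positive, and not obviously product-shaped, set of zero-evidence configurations, so the bookkeeping that keeps the exceptional set of the exact form $\Xx^k\setminus(\Xx\setminus S)^k$ with $\mu_\Xx(S)=0$ is the delicate part. Incompleteness or noncompactness of $M^n$ is harmless, since differentiation is local and every relevant statement is made near a $\mu_\Xx$-full set of points where small geodesic balls are essentially Euclidean.
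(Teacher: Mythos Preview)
Your plan is aimed at the right target---the ratio in \eqref{eq:updatem} is indeed $\Lambda_B(D_r(\mathbf{x}))/m^{(k)}(D_r(\mathbf{x}))$ and the goal is a Besicovitch differentiation statement---but you take a detour that the paper avoids, and you gloss over the one step where the Souslin hypothesis really does work.

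The paper never introduces the densities $f_\theta=dp_\theta/d\mu_\Xx$. For $k=1$ it simply sets $\nu_1=\mu_\Xx$, $\nu_2=\pb_*(1_B\mu_\Theta)$, observes $\nu_2\ll\nu_1$, and quotes a differentiation theorem for finite Borel measures on a complete Riemannian manifold (the companion result \cite{JLT2020}, itself resting on Federer's Besicovitch property \cite[2.8.9]{Federer1969}) to get $\nu_2(D_r(x))/\nu_1(D_r(x))\to (d\nu_2/d\nu_1)(x)$ off a $\nu_1$-null set $S_B$. Your ``main obstacle'' of interchanging $\lim_{r\to 0}$ with $\int_\Theta$ therefore never arises: the interchange is already baked into the differentiation of \emph{measures}. (In fact your own route collapses to this after one Fubini: $\int_B A_r f_\theta(x)\,d\mu_\Theta = A_r\bigl(\int_B f_\theta\,d\mu_\Theta\bigr)(x)$, so the pointwise-in-$\theta$ convergence and the uniform-integrability worry are red herrings for $k=1$.) For $k>1$ your concern about equal-radius product balls is legitimate; the paper argues instead a direct reduction to $k=1$ via the product structure of $\pb^k$.

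The genuine gap in your outline is the regularity step. Writing ``extract a regular version by the standard monotone-class completion'' is not enough: after restricting to a countable generating algebra $\Uu_\Theta$ and taking $S_0=\bigcup_n S_{B_n}$, you have for each $x\notin S_0$ only a \emph{finitely} additive set function $B\mapsto p(B,x)$ on $\Uu_\Theta$, and a monotone-class argument will not manufacture countable additivity. The paper uses the Souslin hypothesis here in an essential way: since every Borel measure on a Souslin space is Radon and concentrated on a countable union of metrizable compacta, one can choose a countable compact class $\{C_{n,l}\}$ inside $\Uu_\Theta$ with $C_{n,l}\subset B_n$ and $\mu_\Theta(B_n\setminus C_{n,l})<1/l$; an integral comparison then forces $p(B_n,x)=\sup_l p(C_{n,l},x)$ for every $n$, off a further null set $S_1$. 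A compact-class criterion for countable additivity (Bogachev, \cite[Theorem~1.4.3]{Bogachev2007}) then yields $\sigma$-additivity of $p(\cdot,x)$ on $\Uu_\Theta$, hence a genuine probability measure on $\Bb(\Theta)$. This, rather than the mere existence of a countable generating algebra, is where the Souslin assumption is doing the work.
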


Furthermore,   we show  that  the  singular  set $S$ in Theorem \ref{thm:bayesm}   remains a    zero measure   set  with respect  to the
updated  marginal  measure (Proposition \ref{prop:cons}). Thus  our formula  is consistent  for Bayes   inference
and can be  applied  to    Bayesian nonparametrics  when the  dominated      measure condition    usually       does not hold. See,  e.g. \cite{GV2017},  for a discussion of   problems  in Bayesian nonparametrics when the assumption of the dominated      measure condition is dropped off and therefore we cannot  apply  the classical Bayes  formula.

{Bayes statistics  and  the Bayes  formula  have been   treated
	in  several papers devoted to categorical  approaches  to   probability and statistics, \cite{CJ2019},   \cite{Fritz2020}, \cite{Parzygnat2020}
	but the  related results  in the   cited papers  are  ``synthetic"   as emphasized by Fritz  \cite{Fritz2020},   in the sense that   existence  and  uniqueness theorems therein are formal/abstract,  in  contrast to our Main Theorem, which is   ``analytic", in the sense that  our formula is explicit. To our knowledge,  such an explicit  Bayes formula  did not exist in the classical literature  on  probability and  mathematical statistics, except  for the classical   Bayes formula under the assumption  of  dominating  measures \cite[Theorem 1.31, p. 16]{Schervish1997}, see also (\ref{eq:schervish}).}

In our  paper  we also  provide  many  examples  to demonstrate  the
advantage  of the functorial language of probabilistic morphisms. In particular
we  give  a short  functorial  proof  of the  existence of  Dirichlet measures (also known as Dirichlet processes) over any  measurable space using the Sethuraman construction.    Thus	 our functorial  proof  adds  a
demonstration of the  advantage  of the   functorial language of probabilistic  morphisms. We would   like  to refer the reader to
\cite{Fritz2020}  for   a generalization of  the category of probabilistic  morphisms  and their  applications,  in particular,  Fritz's reconsideration  of several   classical   results  in  mathematical statistics, using  the  categorical approach.  We refer  the reader  to \cite{Le2020} for further applications of probabilistic  morphisms  to
the	category of statistical models  and their  natural geometry.

To give an overview of the remainder  of  our paper let us fix  some notations. For a   measurable space $\Xx$, which  sometimes  we shall denote   by $(\Xx, \Sigma_\Xx)$  if we wish to specify  the underlying $\sigma$-algebra $\Sigma_\Xx$, let $\Ss(\Xx)$, $\Mm(\Xx)$  and $\Pp(\Xx)$  denote  the space of  finite
signed measures  on $\Xx$, the space of  non-negative measures  and the space of  probability measures on $\Xx$, respectively. We also set $\Mm^* (\Xx): = \Mm(\Xx)\setminus \{0\}$  and denote by $L(\Xx)$ \footnote{In \cite[p. 74]{Chentsov1972}  Chentsov  used the notation $L(\Xx, \Sigma_\Xx)$  which  is    equivalent  to our notation,  and in \cite[p. 371, vol. 2]{Bogachev2007}   Bogachev used  the notation $\Ll^\infty_{\Sigma_\Xx}$ instead of our    notation $L(\Xx)$.} 
the space of bounded measurable  functions on $\Xx$. Furthermore, we denote by $1_A$ the indicator  (characteristic)  function of  a measurable  set $A$. For  a $\sigma$-additive
measure $\mu$ we denote by $\mu^*$   its outer measure.

In the second  section    we   extend   the Kleisli category  of  probabilistic morphisms  proposed by  Lawvere
\cite{Lawvere1962} and Giry  \cite{Giry1982}   and unify   it   with  the  category of statistical models   proposed  by Chentsov  \cite{Chentsov1965}, \cite{Chentsov1972}   and by Morse-Sacksteder \cite{MS1966}. In the third  section, using the results in the
second section, we introduce the notion of a Bayesian statistical model that formalizes  the notion
of a parameter space with a prior distribution in Bayesian statistics. Then   we   express  the  existence  of a posterior  distribution  using  functorial language  and   give a 
formula  for the     posterior  distribution without   the dominated measure model  condition. This will depend on precise measure theoretical arguments, including a version of the Besicovich covering lemma for Riemannian manifolds proved in \cite[2.8.9]{Federer1969}, see also \cite{JLT2020}.  In the fourth section we    give  a new  proof  of  the existence of   Dirichlet
measures over      any measurable  space   using  a  functorial  property  of the   Dirichlet map constructed by Sethuraman.  

\section{Probabilistic morphisms and category of statistical models}\label{sec:stoch} 
In this section,  first, we extend   Lawvere's  natural   $\sigma$-algebra on $\Pp(\Xx)$ to  the spaces $\Ss(\Xx)$,  $\Mm(\Xx)$, $\Mm^*(\Xx)$, $L(\Xx)$.
We prove   their important  properties (Propositions \ref{prop:sigmainduced}, \ref{prop:st}, \ref{prop:markov},  \ref{prop:tauv}) that shall  be  needed  in later  sections. 
Then we    extend   related   results due  to Lawvere, Chentsov, Giry  and Morse-Sacksteder (Theorem \ref{thm:cat}, Remark \ref{rem:diag}) concerning the  Kleisli category  of probabilistic mappings and show   that   the concept of   the category of statistical models introduced   by  Chentsov  and Morse-Sacksteder  
fits   into the category  of probabilistic
morphisms  (Proposition \ref{prop:suffs}, Theorem  \ref{thm:sprob}, Remark \ref{rem:suffs}).

\subsection{Natural $\sigma$-algebras on $\Ss(\Xx)$,  $\Mm(\Xx)$,  $\Pp(\Xx)$  and $L(\Xx)$}\label{subs:wtop}

Let $\Xx$ be a measurable space. Since we want to integrate not only on $\Xx$ itself, but also on spaces of measures on $\Xx$, we need to construct $\sigma$-algebras on those spaces. For that purpose,   let $\Ff_s(\Xx)$ denote
the  linear space of simple (step) functions on $\Xx$. Such a function is a finite linear combination of indicator functions of measurable subsets of $\Xx$, and therefore, the space $\Ff_s(\Xx)$ depends on the $\sigma$-algebra $\Sigma_\Xx$. We shall now use this space to construct induced $\sigma$-algebras on our spaces of measures on $\Xx$.  There  is a natural  homomorphism $I: \Ff_s(\Xx) \to \Ss^*(\Xx):= Hom (S(\Xx), \R), \, f \mapsto  I_f$,
defined  by  integration:  $I_f (\mu):= \int_\Xx f d\mu$  for $f \in \Ff_s(\Xx)$ and $\mu \in \Ss(\Xx)$. 
Following Lawvere  \cite{Lawvere1962},  we shall denote by $\Sigma_w$   the smallest $\sigma$-algebra  on $\Ss(\Xx)$  such that $I_f$  is measurable  for all $f\in \Ff_s (\Xx)$.   We also  denote  by $\Sigma_w$  the restriction  of  $\Sigma_w$
to $\Mm (\Xx)$, $\Mm^* (\Xx)$  and  $\Pp (\Xx)$. Since  $I_f : \Pp(\Xx) \to  \R$ is bounded, if  $f$ is bounded,  by the  Lebesgue  dominated convergence theorem, the $\sigma$-algebra $\Sigma_w$  on $\Pp (\Xx)$ is    the smallest $\sigma$-algebra
on  $\Pp (\Xx)$ such that   $I_f$ is measurable for all $f \in L(\Xx)$.

\begin{remark}
	\label{rem:sigmaw}
	Lawvere \cite{Lawvere1962}  and later  authors \cite{GH1989}, \cite{Kallenberg2017}\footnote{Kallenberg in \cite[p.1]{Kallenberg2017} defined  $\Sigma_w$ on the  space   of all locally finite measures  on $\Xx$  as in \cite{Lawvere1962}.}
	defined  the  $\sigma$-algebra  on $\Pp(\Xx)$ as the  smallest      $\sigma$-algebra for which the evaluation  map $ev_A: \Pp \to [0,1], \, \mu \mapsto \mu(A),$ is   measurable  for all $A \in  \Sigma_\Xx$. It is not hard  to see  that  their definition  is  equivalent  to ours,  since  $\{ I_{1_A}|\,  A \in \Sigma_\Xx\}$ generate  the vector space $\Ff_s$.  The latter space leads  directly to the  space $L(\Xx)$. 
\end{remark}

For a   topological  space $\Xx$   we  shall  take as $\Sigma_\Xx$ the natural  Borel  $\sigma$-algebra  $\Bb(\Xx)$, unless  otherwise specified. Let  $C_b(\Xx)\subset L(\Xx)$ be the space 
of bounded  continuous functions  on $\Xx$. We denote by $\tau_w$ 
the smallest topology on  $\Ss(\Xx)$  such that 
for any  $f\in C_b (\Xx)$    the map $I_f: (\Ss(\Xx), \tau_w)\to \R$ is continuous. We also denote by
$\tau_w$ the     restriction  of $\tau_w$  to $\Mm(\Xx)$ and $\Pp(\Xx)$, which  is also called  the  weak topology. 
It is known that $(\Pp(\Xx), \tau_w)$ is separable, metrizable  if and only if  $\Xx$ is  \cite[Theorem 3.1.4, p. 104]{Bogachev2018},
\cite[Theorem 6.2, p.43]{Parthasarathy1967} \footnote{If $\Xx$ is infinite, then $(\Ss(\Xx), \tau_w)$ is non-metrizable \cite[p. 102]{Bogachev2018}  (warning: Bogachev's $\Mm(\Xx)$ is our $\Ss(\Xx)$).}.   If $\Xx$ is separable and metrizable then  the Borel $\sigma$-algebra on $\Pp(\Xx)$ generated by $\tau_w$  coincides with $\Sigma_w$ \cite[Theorem 2.3]{GH1989}. 

\begin{proposition}\label{prop:sigmainduced} 	(1) Assume that     $\Sigma _\Xx$   has a  countable generating      algebra  $\Aa_\Xx$. Then  $\Mm(\Xx)$ is a measurable  subset  of $\Ss(\Xx)$, and $\Pp (\Xx)$ and $ \Mm^* (\Xx)$ are   measurable  subsets  of $\Mm(\Xx)$. 
	
	(2) The  addition  $\a: (\Mm(\Xx) \times \Mm(\Xx),\Sigma_w\otimes \Sigma_w) \to (\Mm(\Xx), \Sigma_w), \:  (\mu, \nu)\mapsto \mu +\nu$, is  a measurable map.
	If $\Xx$ is  a  topological space, then the map  $\a$ is $\tau_w$-continuous, i.e., continuous in the $\tau_w$-topology.	
\end{proposition}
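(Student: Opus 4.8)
The plan is to realize each of the three sets in Part~(1) as the preimage of a Borel set under \emph{countably many} evaluation maps $ev_A$, and to deal with the addition map in Part~(2) by testing it against the generators $I_f$ of $\Sigma_w$ (respectively of $\tau_w$).

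For Part~(1) the natural description is $\Mm(\Xx) = \{\mu \in \Ss(\Xx) : \mu(A) \ge 0 \text{ for all } A \in \Sigma_\Xx\}$. Written as $\bigcap_{A \in \Sigma_\Xx} ev_A^{-1}([0,\infty))$ this is an uncountable intersection, and this is exactly where the countable generating algebra $\Aa_\Xx$ is needed. First I would prove the reduction
\begin{equation*}
\Mm(\Xx) = \bigcap_{A \in \Aa_\Xx} ev_A^{-1}([0,\infty)),
\end{equation*}
i.e.\ that a finite signed measure $\mu$ is non-negative as soon as $\mu(A) \ge 0$ for every $A$ in the generating algebra. The key tool is a standard approximation lemma: for a finite measure $\lambda$ on $\Sigma_\Xx$ and the pseudometric $d_\lambda(A,B) = \lambda(A \triangle B)$, the algebra $\Aa_\Xx$ is dense in $(\Sigma_\Xx, d_\lambda)$ (see e.g.\ \cite{Bogachev2007}). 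Applying this with $\lambda = |\mu|$ the total variation, which is finite, every $A \in \Sigma_\Xx$ admits a sequence $B_n \in \Aa_\Xx$ with $|\mu|(A \triangle B_n) \to 0$, whence $|\mu(A) - \mu(B_n)| \le |\mu|(A \triangle B_n) \to 0$; if all $\mu(B_n) \ge 0$ then $\mu(A) \ge 0$. Since $\Aa_\Xx$ is countable and each $ev_A$ is $\Sigma_w$-measurable by definition (Remark~\ref{rem:sigmaw}), the displayed intersection is a countable intersection of measurable sets, so $\Mm(\Xx) \in \Sigma_w$.

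With $\Mm(\Xx)$ measurable, the remaining two inclusions follow from the measurability of the single map $ev_\Xx$: one has $\Pp(\Xx) = \Mm(\Xx) \cap ev_\Xx^{-1}(\{1\})$, and, since a non-negative measure of total mass zero is the zero measure, $\{0\} = \Mm(\Xx) \cap ev_\Xx^{-1}(\{0\})$, so that $\Mm^*(\Xx) = \Mm(\Xx) \setminus \{0\}$ is measurable in $\Mm(\Xx)$. For Part~(2) I would use that $\Sigma_w$ is by construction the initial $\sigma$-algebra for the family $\{I_f : f \in \Ff_s(\Xx)\}$, so a map $g$ into $(\Mm(\Xx), \Sigma_w)$ is measurable iff $I_f \circ g$ is measurable for every $f$. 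For $g = \a$ this is immediate from additivity of the integral:
\begin{equation*}
I_f(\a(\mu,\nu)) = \int_\Xx f\, d(\mu+\nu) = I_f(\mu) + I_f(\nu) = (I_f \circ \pi_1 + I_f \circ \pi_2)(\mu,\nu),
\end{equation*}
a sum of two $(\Sigma_w \otimes \Sigma_w)$-measurable functions, hence measurable. The topological claim is the exact analogue: $\tau_w$ is the initial topology for $\{I_f : f \in C_b(\Xx)\}$, so $\a$ is $\tau_w$-continuous iff each $I_f \circ \a$ is continuous, and the same identity exhibits $I_f \circ \a = I_f \circ \pi_1 + I_f \circ \pi_2$ as a sum of continuous functions on the product.

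The only genuine obstacle is the reduction step in Part~(1): everything else is a formal consequence of the universal (initial) properties of $\Sigma_w$ and $\tau_w$, whereas verifying that non-negativity is detected on the countable algebra $\Aa_\Xx$ genuinely uses the finiteness of $|\mu|$ through the approximation lemma, and it is precisely here that the countable-generation hypothesis enters.
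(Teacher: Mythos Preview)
Your proposal is correct and follows essentially the same approach as the paper: in Part~(1) the paper writes $\Mm(\Xx) = \bigcap_{A \in \Aa_\Xx} \langle A, \R_{\ge 0}\rangle$ (your $ev_A^{-1}([0,\infty))$), $\Mm^*(\Xx) = \Mm(\Xx) \cap \langle \Xx, \R_{>0}\rangle$, and $\Pp(\Xx) = \Mm(\Xx) \cap \langle \Xx, 1\rangle$, and in Part~(2) it reduces via $I_f \circ \a$ to the measurability/continuity of addition on $\R_{\ge 0}$, exactly as you do. In fact you go further than the paper by explicitly justifying, via the approximation lemma for $|\mu|$, why non-negativity on the generating algebra $\Aa_\Xx$ forces non-negativity on all of $\Sigma_\Xx$---a step the paper asserts without argument.
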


\begin{proof} 1.
	The  $\sigma$-algebra $\Sigma_w$ on $\Ss(\Xx)$  is generated  by subsets  $\la A, B^*\ra: = I^{-1}_{1_A}  (B^*)$  where  $A \in \Sigma_\Xx$  and $B^* \in \Bb (\R)$.
	We have 
	$$\Mm(\Xx) =  \cap _{ A  \in \Aa_\Xx}\la  A, \R_{\ge 0}\ra,$$
	because an element of $\Mm(\Xx)$ has to be nonnegative on every $A  \in \Aa_\Xx$. When  $\Sigma _\Xx$   has a  countable generating      algebra  $\Aa_\Xx$, this is a countable intersection, implying the measurability of $\Mm(\Xx)$.
	And since 
	$$\Mm^* (\Xx) = \Mm (\Xx) \cap  \la   \Xx, \R_{> 0} \ra $$
	and
	$$ \Pp (\Xx) = \Mm (\Xx)\cap  \la \Xx, 1 \ra $$
	we then also obtain  the measurability   of $\Mm^*(\Xx)$ and $\Pp (\Xx)$. This proves (1).

	2. To prove the  measurability  of the map $\a$ it suffices to show that for any
	$f \in \Ff_s(\Xx)$ the composition $I_f \circ \a: \Mm(\Xx)\times \Mm(\Xx) \to \R_{\ge 0}$ is measurable. Using the formula
	$$I_f\circ  \a (\mu, \nu) = I_{1_\Xx}(f\cdot \mu) + I_{1_\Xx}(f \cdot \nu),$$
	we reduce  the measurability  of $I_f \circ \a$  to the   measurability of the map
	$\a: \R_{\ge 0} \times \R_{\ge 0} \to \R_{\ge 0}, (x, y) \mapsto (x+y),$ which is well-known. 
	
	Similarly we prove the continuity  of the  map $\a$, if $\Xx$ is a   topological space. This proves  Proposition \ref{prop:sigmainduced}(2). 
\end{proof}

Since  the inclusions  $\Mm(\Xx) \to \Ss(\Xx)$, $\Mm^* (\Xx) \to \Ss(\Xx)$ and  $\Pp (\Xx) \to \Ss(\Xx)$) are   measurable maps, any measure  on  $\Mm(\Xx)$, on $\Mm^* (\Xx)$  and  on $\Pp(\Xx)$  is  obtained by restricting  a measure  on $\Ss(\Xx)$  to 
$\Mm(\Xx)$, $\Mm^*(\Xx)$ and $\Pp(\Xx)$, respectively.  By  Proposition \ref{prop:sigmainduced}   the restriction of any measure  on $\Ss(\Xx)$ to  $\Mm(\Xx)$, $\Mm^*(\Xx)$  and $\Pp^*(\Xx)$)  is a measure on
$\Mm (\Xx)$,  $\Mm^*(\Xx)$  and $\Pp^*(\Xx)$, respectively  if $\Sigma_\Xx$   is countably generated. 

\subsection{Probabilistic  morphisms and associated  functors}\label{subs:propbm}

\begin{definition}\label{def:probmap}  {\it A probabilistic  morphism}
	from   a measurable space $\Xx$ to a measurable  space $\Yy$  (or an arrow   from $\Xx$ to $\Yy$)  is a
	measurable  mapping  from  $\Xx$ to $ (\Pp (\Yy), \Sigma_w)$.
\end{definition}

\begin{remark}\label{def:kleisli}  Definition \ref{def:probmap}
	agrees  with the definition  of  morphisms  in  the Kleisli category  of the
	Giry probability  monad, see  Theorem \ref{thm:cat} and Remark \ref{rem:diag}  below.  (At this stage   we  have not  defined   the category where   a  probabilistic morphism   is   a morphism).  Furthermore  we     would like  to emphasize   that    a   probabilistic morphism   from $\Xx \leadsto \Yy$  {\it is not}   a  mapping   from $\Xx$ to $\Yy$  as it assigns to $x\in \Xx$ not a single point $y\in \Yy$, but rather a probability distribution on $\Yy$. 
	A probabilistic morphism  is  the same as a Markov kernel, see also Remark \ref{rem:diag}.
\end{remark}

We shall  denote by $\overline{T}: \Xx \to  (\Pp(\Yy), \Sigma_w)$ the measurable  mapping  defining/generating a probabilistic  morphism $T: \Xx \leadsto \Yy$. 
Similarly, for a measurable mapping $\pb: \Xx \to \Pp(\Yy)$  we shall denote by $\underline{\pb}: \Xx \leadsto \Yy$ the generated probabilistic  morphism.   Note that    a   probabilistic  morphism  is  denoted  by a curved  arrow  and  a measurable mapping by a straight arrow.

\begin{example}\label{ex:prob1}
	
	(1) Assume that $\Xx$ is separable and metrizable. Then the  identity mapping $Id_\Pp:(\Pp(\Xx), \tau_ww \to (\Pp (\Xx), \tau_w)$ is  continuous, and hence 
	measurable w.r.t.  the Borel $\sigma$-algebra $\Sigma_w = \Bb(\tau_w)$. Consequently $Id_\Pp
	$  generates    a probabilistic morphism $ev: (\Pp (\Xx), \Bb(\tau_w))  \leadsto  (\Xx, \Bb(\Xx))$  and we write  $\overline {ev} = Id_\Pp$.  Similarly, for any measurable space $\Xx$,   we also  have   a probabilistic morphism  $ev: (\Pp(\Xx), \Sigma_w) \leadsto \Xx$  generated  by     the  measurable mapping $\overline {ev} = Id_\Pp$.

	(2) In   generative models of supervised learning,   we   wish to know  the stochastic  correlation   between   a label $y$ in a sample space $\Yy$  and      an
	input $x$ in  a sample space  $\Xx$ that expresses the uncertain nature of the relationship between an input and its label. 
	Such a stochastic correlation is  expressed  via  a   probability measure  $\mu_{\Xx \times \Yy} \in \Pp(\Xx \times \Yy)$,  assuming usually 	$\Sigma_{\Xx \times \Yy}= \Sigma_\Xx\otimes \Sigma_\Yy$. Denote by $\Pi_\Xx$  and $\Pi_\Yy$  the   natural projections    from $\Xx \times \Yy$ to $\Xx$ and $\Yy$, respectively. One  is  then interested     in  the conditional     probability  
	\begin{equation}\label{eq:xiy}
	\mu_{\Yy|\Xx}(y \in B| x): = \frac{ d(\Pi_\Xx)_* (1_{B \times \Xx}\mu_{\Xx \times \Yy}
		)}{d(\Pi_\Xx)_* \mu_{\Xx \times \Yy}}(x)
	\end{equation}
	for  $B \in \Sigma_\Yy$ and $x\in \Xx$.  Here  the  equality  (\ref{eq:xiy})   should be understood as an  equivalence  class of  functions  in $L^1 (\Xx, (\Pi_\Xx)_* \mu_{\Xx\times \Yy})$.
	For   simplification and practical purposes, we   assume that
	the  conditional   probability  $\mu_{\Yy|\Xx} (\cdot | x)$  is regular, i.e.,   there is  a function  on $\Xx$  that  represents  the equivalence class  of  $\mu_{\Yy|\Xx} (\cdot | x)$, and therefore we   shall    denote  it  also by $\mu_{\Yy|\Xx} (\cdot | x)$ abusing notation,  such that  $\mu_{\Yy|\Xx} (\cdot | x)$
	is a measure  on $\Yy$ for all $x\in \Xx$,   and  the  function $ x\mapsto   \mu_{\Yy|\Xx} (B|x)$ is   measurable   for  every  $B\in \Sigma_\Yy$. In this  case
	the map $\overline T: \Xx \to \Pp(\Yy), x\mapsto  \mu_{\Yy|\Xx} (\cdot|x)$, is a measurable map.  Thus   under   this  regularity condition  the problem  of  supervised  learning with generative models  is   equivalent  to  the  problem of   finding     a  probabilistic  morphism $T: \Xx \leadsto \Yy$  describing  the correlation between   labeled  data  $y$ and  input data $x$.

	(3)	 We would like to present here a non-trivial construction of probabilistic morphisms, which comes from the theory of random mappings \cite{Kifer1986}, \cite{Kifer1988}. Consider a probability space $(\Omega, \mathcal{F},\mathbb{P})$ and a {\it random mapping} $T$, which is a measurable mapping $T: \Omega \times \Xx \to \Yy$ (in case $\Xx = \Yy$, the random mapping $T$ is the source to generate a random dynamical system \cite{Arnold1998}).\\
	Such a random mapping $T: \Omega \times \Xx \to \Yy$ then generates a measurable mapping $\overline{T}: \Xx \to \Pp(\Yy)$, defined by
	\begin{equation}\label{randommap}
	\overline{T}(x)(B) := \mathbb{P} \{\omega \in \Omega: T(\omega,x) \in B\} = \mathbb{P}\{ T^{-1}(\cdot,x)(B)\},\quad \forall B \in \Sigma_\Yy.  
	\end{equation}
	Relation \eqref{randommap} shows that once a probabilistic sample space $(\Omega, \mathcal{F},\mathbb{P})$ is given, any random mapping would generate a probabilistic morphism. However, since probabilistic morphisms  need not be constructed this way, the concept of probabilistic morphisms can be seen as a generalization of random mappings.\\ 
	We are also interested in the inverse problem on the representation of probability measures, which can be formulated as follows: Given a probabilistic morphism
	$T: \Xx \leadsto \Yy $, is there a probabilistic sample space $(\Omega, \mathcal{F},\mathbb{P})$ such that $T$ can be written as a random mapping from $\Omega \times \Xx$ to $\Yy$?  This question appears in several contexts in Ergodic Theory \cite{Kifer1986}, \cite{Kifer1988}. The answer is affirmative if $\Yy$ is a compact, oriented, connected manifold of class $C^{2}$ and $\Xx$ is a $C^0$ manifold with some additional assumptions on the decay of the densities, where one can choose $\Omega := \Yy$ (see details in \cite[Theorem A]{JMPR2019}).
\end{example}

Given  a    probabilistic  morphism   $T:  \Xx \leadsto \Yy$,   we define   a   linear  map
$T^*: L(\Yy) \to L(\Xx)$   as follows
\begin{equation}\label{eq:adjoint1}
T ^*(f)  (x): = I_f (\overline T (x))  = \int _\Yy f d  \overline T (x).
\end{equation}

Formula  (\ref{eq:adjoint1}) coincides  with the  classical formula  (5.1)  in \cite[p. 66]{Chentsov1972} for  the transformation  of  a  bounded measurable  function  $f$ under  a Markov morphism  $T$.\footnote{Chentsov  called   the  induced transformation  $P_*(T)$ (see Remark \ref{rem:l0}(2))  of a probabilistic morphism  $T$  a Markov morphism \cite{Chentsov1965}.}
In particular, if $\kappa: \Xx \to \Yy$ is a measurable mapping,  then   we have $ \kappa^*(f) (x) = f(\kappa(x))$, since  $\overline  \kappa = \delta\circ \kappa$.

Further,   we   define  a linear  map $S_*(T): \Ss(\Xx) \to \Ss(\Yy)$  as follows  \cite[Lemma 5.9, p. 72]{Chentsov1972}
\begin{equation}\label{eq:markov1}
S_*(T) (\mu) (B): = \int_{\Xx}\overline T (x) (B)d\mu (x)
\end{equation}
for any    $\mu \in \Ss (\Xx)$  and  $B \in \Sigma_\Yy$.

\begin{remark}
	\label{rem:l0}  (1) If  $\kappa: \Xx \to \Yy$  is a measurable mapping then
	$\kappa ^* (\Ff_s (\Yy))\subset \Ff_s (\Xx)$.  For  a general probabilistic
	morphism   $T: \Xx \leadsto \Yy$,  we don't have  the inclusion $ T^* (\Ff_s (\Yy))\subset \Ff_s (\Xx)$, since the   cardinality $\#( T^*  (1_B)(x) |\,  x\in \Xx)$ can  be infinite.  For   example, consider $T = ev: \Xx: = \Pp (\Zz) \leadsto \Yy :=\Zz$  for some   measurable space $\Zz$ and  $ \emptyset  \not= B \not = \Zz$. Then  $ev^* (1_B) (\mu)  = \mu (B)$  for any $\mu \in \Xx = \Pp (\Zz)$. Therefore   $\# (ev ^* (1_B) (\mu)|\,  \mu\in  \Pp(\Zz)) = \# (\mu (B)|\mu \in\Pp(\Zz)) = \infty$, if  $\Zz  = [0,1]$ and  $B = [0,1/2]$.
	
	(2)  Lawvere   also  defined    $P_*(T): \Pp (\Xx) \to \Pp(\Yy)$  by  (\ref{eq:markov1})  for
	$\mu \in \Pp(\Xx)$.
	
\end{remark}

\begin{definition}	\label{def:dual} 
	- We denote  by $\Sigma_*$  the  smallest $\sigma$-algebra on  $L(\Xx)$  such that   for any $\mu \in \Pp(\Xx)$ the  evaluation function $ev_\mu : L(\Xx) \to \R, h \mapsto I_h (\mu)$,  is measurable.	
	
	-	We denote  by $\Sigma_{ev}$ the smallest  $\sigma$-algebra on    $L(\Xx)$  such that 
	for all  $x \in  \Xx$  the evaluation map   $ev_x: L(\Xx) \to \R, h \mapsto h(x),$  is measurable.
	
\end{definition} 

\begin{example}\label{sigmas}
	Identifying  $L (\Om_k)=C_b(\Om_k)$  with   $\Ss(\Om_k)$ via the Euclidean  metric,   and  noting that  the $\sigma$-algebra	$\Sigma_w$ on $\Ss(\Om_k)$     is the  usual   Borel $\sigma$-algebra  on 
	$\R^k = \Ss(\Om_k)$, cf. \cite[Theorem 2.3]{GH1989},  it is not hard to see  that  the $\sigma$-algebra	$\Sigma_*$ on $L(\Om_k)= \R^k$  coincides with the Borel $\sigma$-algebra  on $\R^k$.
	\end{example}

\begin{remark}\label{rem:Lxsigma}  (1)  For any $x \in  \Xx$ denote  by  $\delta_x \in \Pp(\Xx)$ the Dirac 
	measure concentrated at $x$.   Giry proved  that the map  $\delta: \Xx \to (\Pp(\Xx), \Sigma_w), x \mapsto \delta_x,$ is a measurable
	mapping\cite{Giry1982}.   Furthermore, for $x\in \Xx$ and $h \in L(\Xx)$   we have $ev_{\delta(x)}(h)= ev_x(h)$.

	(2) The  requirement  that  the evaluation maps  $ev_x$ are measurable   is natural  in the theory of measurable mappings  \cite{Aumann1961}  with applications in stochastic processes \cite{Baudoin2014}. 
\end{remark}

\begin{proposition}\label{prop:st}  For any  measurable space $\Xx$ we have  $\Sigma_{ev} \subset \Sigma_*$. If  $\Xx$ is  a separable metrizable  space then   $\Sigma_{ev} = \Sigma_*$.
\end{proposition}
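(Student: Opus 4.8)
The plan is to treat the two inclusions separately: the first is formal and holds for arbitrary $\Xx$, while the second is an approximation argument in which separability and metrizability are essential.

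For the inclusion $\Sigma_{ev} \subset \Sigma_*$ I would show that every generator $ev_x$ of $\Sigma_{ev}$ is already $\Sigma_*$-measurable. By Remark \ref{rem:Lxsigma}(1) the Dirac map $\delta : \Xx \to \Pp(\Xx)$ is well defined and $ev_{\delta(x)}(h) = ev_x(h)$ for every $h \in L(\Xx)$; that is, $ev_x = ev_{\delta_x}$ with $\delta_x \in \Pp(\Xx)$. Since by Definition \ref{def:dual} the $\sigma$-algebra $\Sigma_*$ is the smallest one on $L(\Xx)$ making each $ev_\mu$ with $\mu \in \Pp(\Xx)$ measurable, the particular map $ev_{\delta_x}$ is $\Sigma_*$-measurable, hence so is $ev_x$. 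As this holds for all $x \in \Xx$ and $\Sigma_{ev}$ is by definition the smallest $\sigma$-algebra rendering all $ev_x$ measurable, we conclude $\Sigma_{ev} \subset \Sigma_*$.

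For the reverse inclusion, assuming $\Xx$ separable and metrizable, it suffices to prove that each generating map $ev_\mu$, $\mu \in \Pp(\Xx)$, is $\Sigma_{ev}$-measurable; then $\Sigma_* \subset \Sigma_{ev}$ and equality follows. The idea is to realize $ev_\mu$ as a pointwise limit of functionals built only from point evaluations. I would fix a countable dense subset of $\Xx$ and a sequence of finite Borel partitions $\{A^n_1, \dots, A^n_{k_n}\}$ with mesh tending to $0$, selecting in each cell $A^n_i$ a representative point $x^n_i$ from the dense set, and then define $\Phi_n : L(\Xx) \to \R$ by $\Phi_n(h) = \sum_{i=1}^{k_n} \mu(A^n_i)\, h(x^n_i)$. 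Each $\Phi_n$ is a finite linear combination of the point evaluations $ev_{x^n_i}$ with constant coefficients $\mu(A^n_i)$, hence $\Sigma_{ev}$-measurable; equivalently $\Phi_n(h) = I_h(\mu_n)$ for the discrete measure $\mu_n = \sum_i \mu(A^n_i)\,\delta_{x^n_i}$. If $\Phi_n(h) \to I_h(\mu) = ev_\mu(h)$ for every $h \in L(\Xx)$, then $ev_\mu$ is a pointwise limit of $\Sigma_{ev}$-measurable functions and is therefore itself $\Sigma_{ev}$-measurable. Separability guarantees all sample points lie in a single countable set, so only countably many of the evaluations $ev_x$ are ever invoked, matching the structural fact that $\Sigma_{ev}$ is the trace of the coordinate (cylinder) $\sigma$-algebra; metrizability is what makes the shrinking-mesh partitions meaningful.

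The main obstacle is exactly the convergence $\Phi_n(h) \to \int_\Xx h\, d\mu$. For bounded continuous $h$ it is the weak convergence $\mu_n \to \mu$, which the control on the mesh delivers; the delicate step is to propagate this to the whole space $L(\Xx)$ of bounded measurable functions, and to verify that the limiting functional does not depend on the chosen partitions or sample points. I expect to handle this by first establishing convergence on a class generating the relevant structure and then extending via a monotone-class and dominated-convergence argument. Once the convergence is secured and the independence of choices is checked, the pointwise-limit argument of the previous paragraph closes the proof and yields $\Sigma_* \subset \Sigma_{ev}$, hence $\Sigma_{ev} = \Sigma_*$.
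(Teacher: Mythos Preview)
Your strategy matches the paper's almost exactly. For the first inclusion the paper argues precisely via $ev_x = ev_{\delta_x}$. For the second, the paper likewise approximates $\mu$ by discrete measures $\mu^k = \sum_i \mu(B_{k,i})\,\delta_{x_{k,i}}$ built from countable Borel partitions of shrinking diameter, and then asserts that $\mu^k(h) \to \mu(h)$ for every $h \in L(\Xx)$, concluding that $ev_\mu$ is a pointwise limit of $\Sigma_{ev}$-measurable maps.

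You are right to isolate the passage from $C_b(\Xx)$ to all of $L(\Xx)$ as the crux, but the monotone-class repair you sketch cannot succeed. With $\Xx = [0,1]$, $\mu$ Lebesgue, and $D$ the countable set of all chosen sample points $\{x^n_i\}$, one has $\Phi_n(1_D) = \sum_i \mu(A^n_i) = 1$ for every $n$, whereas $\int 1_D\, d\mu = 0$; thus the class $\{h \in L(\Xx) : \Phi_n(h) \to \int h\, d\mu\}$ omits $1_D$ and is not closed under the bounded pointwise limits a functional monotone-class argument would require. The obstruction is structural rather than a matter of choosing better partitions: any $\Sigma_{ev}$-measurable functional on $L(\Xx)$ must factor through the restriction $h \mapsto h|_C$ for some \emph{countable} $C \subset \Xx$ (the standard countable-coordinate property of the product $\sigma$-algebra on $\R^{\Xx}$), whereas for a diffuse $\mu$ the map $h \mapsto \int h\, d\mu$ visibly does not --- compare $h_1 = 0$ and $h_2 = 1_{\Xx \setminus C}$, which agree on $C$ but have integrals $0$ and $1$. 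The gap you sensed is therefore genuine, is shared by the paper's own argument, and cannot be closed along this line.
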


\begin{proof}
	Let $\Xx$ be a measurable space   and $x \in \Xx$. Since the  restriction  of   the evaluation  mapping $ev_x$  to 
	$L(\Xx)$ is equal to  the evaluation mapping $ev_{\delta (x)}$,  we  obtain  immediately  the first  assertion of Proposition \ref{prop:st}.
	
	%Then  the convex hull  $Conv(\delta(x)|\, x\in \Xx)$ of Dirac measures  is dense in the    space $(\Pp(\Xx), \tau_w)$
	%\cite[Lemma 23]{Le2017}.
	
	To prove the  second assertion  it suffices   to show    that  the evaluation  mapping
	$$ ev_{\mu} :(L(\Xx), \Sigma_{ev}) \to \R,\: h \mapsto  I_h(\mu)  $$
	is measurable  for any $\mu \in \Pp(\Xx)$.  	Since  $\Xx$ is a  separable  metrizable space, by \cite[Example 2.2.7, p. 55]{Bogachev2018}, for any  $k \in \N$ there    exist a partition    of $\Xx$  in  countably many Borel parts, $\Xx = \cup_{i =1} ^\infty B_{k,i}$  and  a sequence   of points $x_{k,i} \in B_{k,i}$  such that  the sequence
$$\mu^k = \sum _{ i =1} ^\infty \mu (B_{k,m}) \delta _{x_{k, i}}$$
converges weakly to $\mu$. 
	
%We write
%	$\mu = \sum_{i=1}^n   c_i  \delta_{x_i}$  for
%	$c_i > 0$  and $n \in \N_+\cup \infty$. Let  $\mu^k : = \sum _{i=1}^k   c_i \delta_{x_i}, \, k\le n$. 
Then  for any  $h \in L(\Xx)$  we have
	$$\mu(h) = \lim_{k \to \infty} \mu^k (h).$$
	Since  $ev_{\mu^k}:  (L(\Xx), \Sigma_{ev}) \to \R$ is measurable, it follows that  $ev_\mu:  (L(\Xx), \Sigma_{ev}) \to \R$ is measurable. This completes the proof of Proposition \ref{prop:st}.
	
\end{proof}

\begin{proposition}\label{prop:markov}  Assume that  $T: \Xx  \leadsto \Yy$  is a probabilistic   morphism. 
	
	(1) Then $T$ induces   a bounded linear   map  $S_*(T): \Ss(\Xx) \to \Ss(\Yy)$ w.r.t. the total variation norm $|| \cdot || _{TV}$. The restriction $M_*(T)$  	of $S_*(T)$ to  $\Mm(\Xx)$  and $P_*(T)$  of $S_*(T)$ to  $\Pp (\Xx)$, maps  $\Mm(\Xx)$ to  $\Mm(\Yy)$   and $\Pp (\Xx)$ to
	$\Pp(\Yy)$, respectively. 
	
	(2) If  $T$  is a measurable mapping, then   $S_*(T):  (\Ss(\Xx), \Sigma_w) \to (\Ss(\Yy), \Sigma_w)$ is a  measurable map. It is  $\tau_w$-continuous  if  $T$ is a continuous  map between separable metrizable spaces.  Hence   the maps $M_*(T)$ and $P_*(T)$
	are   measurable  and   they are     $\tau_w$-continuous  
	if  $T$ is a  continuous map  between separable metrizable spaces.
	
	(3)  If $T_1: \Xx \to \Yy$ and $ T_2: \Yy \to \Zz$ are measurable  mappings then  we have
	\begin{equation}\label{eq:fm}
	S_*(T_2 \circ T_1)= S_*(T_2)\circ S_*(T_1), \: P_*(T_2 \circ T_1)= P_*(T_2)\circ P_*(T_1).
	\end{equation}
	
	(4)  The map $T$ also induces measurable  linear  mappings $T^*:  (L(\Yy), \Sigma_*) \to  (L(\Xx), \Sigma_*)$ and $T^*: (L(\Yy), \Sigma_{ev}) \to (L(\Xx), \Sigma_{ev})  $.  
\end{proposition}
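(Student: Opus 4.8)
The whole proof rests on one duality identity between $S_*(T)$ and $T^*$, namely
\begin{equation}\label{eq:dualityplan}
I_g\bigl(S_*(T)(\mu)\bigr)=I_{T^*(g)}(\mu),\qquad \mu\in\Ss(\Xx),\ g\in L(\Yy),
\end{equation}
which I would establish first. For $g=1_B$ it is immediate from (\ref{eq:adjoint1}) and (\ref{eq:markov1}), since both sides equal $\int_\Xx\overline T(x)(B)\,d\mu(x)$; it then extends to simple $g$ by linearity and to all $g\in L(\Yy)$ by uniform approximation of $g$ by simple functions together with dominated convergence (legitimate because $\overline T(x)$ is a probability measure and $\mu$ is finite). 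Along the way one records that $x\mapsto\overline T(x)(B)=I_{1_B}(\overline T(x))$ is measurable, being the composition of the $\Sigma_w$-measurable map $\overline T$ with the $\Sigma_w$-measurable evaluation $I_{1_B}$ on $\Pp(\Yy)$.

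For (1), well-definedness of $S_*(T)(\mu)$ as a finite signed measure follows by checking countable additivity: for disjoint $(B_n)$ one has $\sum_n\overline T(x)(B_n)=\overline T(x)(\bigcup_n B_n)\le 1$, so the partial sums are dominated and one may interchange summation with $\int\!\cdot\,d|\mu|$. Boundedness in total variation comes from testing against a partition $\{B_i\}$ of $\Yy$: $\sum_i|S_*(T)(\mu)(B_i)|\le\int_\Xx\sum_i\overline T(x)(B_i)\,d|\mu|(x)=\int_\Xx 1\,d|\mu|=\|\mu\|_{TV}$, so $\|S_*(T)\|\le 1$. The mapping properties are then read off directly: nonnegativity of $\overline T(x)(B)$ gives $S_*(T)(\Mm(\Xx))\subset\Mm(\Yy)$, and $S_*(T)(\mu)(\Yy)=\int_\Xx\overline T(x)(\Yy)\,d\mu=\mu(\Xx)$ gives $S_*(T)(\Pp(\Xx))\subset\Pp(\Yy)$.

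For (2), to prove $\Sigma_w$-measurability of $S_*(T)$ it suffices, by definition of $\Sigma_w$ on $\Ss(\Yy)$, that $I_g\circ S_*(T)$ be measurable for every $g\in\Ff_s(\Yy)$; by (\ref{eq:dualityplan}) this map equals $I_{T^*(g)}$, and $I_h$ is $\Sigma_w$-measurable for every $h\in L(\Xx)$ because $h$ is a uniform limit of simple functions $h_n$ and $I_{h_n}\to I_h$ pointwise on $\Ss(\Xx)$. For $\tau_w$-continuity when $T$ is a continuous map of separable metrizable spaces, I would test against $g\in C_b(\Yy)$: here (\ref{eq:dualityplan}) holds with $T^*(g)=g\circ T\in C_b(\Xx)$, so $I_g\circ S_*(T)=I_{g\circ T}$ is $\tau_w$-continuous by the very definition of $\tau_w$, and continuity follows since $\tau_w$ on $\Ss(\Yy)$ is the initial topology of the $I_g$, $g\in C_b(\Yy)$. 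The statements for $M_*(T),P_*(T)$ follow by restriction. Part (3) is the easiest: for a measurable map $T_1$ one has $\overline{T_1}=\delta\circ T_1$, whence $S_*(T_1)(\mu)(B)=\int_\Xx 1_B(T_1(x))\,d\mu=\mu(T_1^{-1}(B))$, i.e.\ $S_*(T_1)=(T_1)_*$ is the pushforward, and the two functoriality identities are just $(T_2\circ T_1)^{-1}=T_1^{-1}\circ T_2^{-1}$ applied to $\mu$.

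For (4), linearity of $T^*$ is clear, and $T^*(f)\in L(\Xx)$ because $|T^*(f)(x)|\le\|f\|_\infty$ and $T^*(f)=I_f\circ\overline T$ is measurable. For $\Sigma_*$-measurability it suffices that $ev_\mu\circ T^*$ be $\Sigma_*$-measurable on $L(\Yy)$ for each $\mu\in\Pp(\Xx)$; by (\ref{eq:dualityplan}), $ev_\mu(T^*(f))=I_{T^*(f)}(\mu)=I_f(P_*(T)(\mu))=ev_{P_*(T)(\mu)}(f)$, which is $\Sigma_*$-measurable by the definition of $\Sigma_*$ since $P_*(T)(\mu)\in\Pp(\Yy)$. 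The $\Sigma_{ev}$-measurability is the one genuinely delicate point, and I expect it to be the main obstacle: it reduces to showing that $ev_x\circ T^*\colon f\mapsto I_f(\overline T(x))=ev_{\overline T(x)}(f)$ is $\Sigma_{ev}$-measurable on $L(\Yy)$ for each $x$, and here the $\Sigma_*$-argument is not enough because $\Sigma_{ev}\subset\Sigma_*$ may be strict. The way around is to realize $ev_{\overline T(x)}$ as a pointwise limit of finite combinations of point evaluations: approximating $\overline T(x)$ by discrete measures $\nu_k=\sum_i\overline T(x)(B_{k,i})\,\delta_{y_{k,i}}$ exactly as in the proof of Proposition \ref{prop:st} gives $I_f(\overline T(x))=\lim_k\sum_i\overline T(x)(B_{k,i})\,ev_{y_{k,i}}(f)$, a pointwise limit of $\Sigma_{ev}$-measurable functionals and hence $\Sigma_{ev}$-measurable. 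This is precisely the step that forces one to invoke the approximation machinery (and the attendant separable-metrizability hypothesis) behind Proposition \ref{prop:st}, rather than the soft duality argument that handles every other claim.
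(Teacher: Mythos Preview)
Your proof is correct and follows essentially the same route as the paper, organized around the same duality $I_g(S_*(T)\mu)=I_{T^*g}(\mu)$, which the paper also uses (though it does not isolate it as a lemma). Two points of comparison are worth recording.

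For (2), the paper exploits more directly the hypothesis that $T$ is a measurable \emph{mapping}: then $T^*(g)=g\circ T$, so $T^*(\Ff_s(\Yy))\subset\Ff_s(\Xx)$ and $I_{T^*g}$ is $\Sigma_w$-measurable by the very definition of $\Sigma_w$, with no approximation step needed. Your argument via uniform approximation of $T^*g\in L(\Xx)$ by simple functions is also correct and in fact proves more---it gives $\Sigma_w$-measurability of $S_*(T)$ for an arbitrary probabilistic morphism, not just for mappings---but the paper's route is shorter for the statement as written.

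For (4), you are right that $\Sigma_{ev}$-measurability is the delicate point. The paper's proof reduces, as yours does, to showing that $ev_{\overline T(x)}:(L(\Yy),\Sigma_{ev})\to\R$ is measurable for each $x$, but then simply asserts this follows ``since $ev_x(h)=ev_{\delta(x)}(h)$'', which is not by itself a justification when $\overline T(x)$ is not a Dirac mass. Your fix via discrete approximation of $\overline T(x)$, borrowing the mechanism of Proposition~\ref{prop:st}, is the natural completion of the argument; it carries the separable-metrizability hypothesis on $\Yy$, and the paper is tacitly relying on the same device (under that hypothesis Proposition~\ref{prop:st} gives $\Sigma_{ev}=\Sigma_*$, which is what makes the paper's one-line deduction go through).
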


\begin{proof} 1. Proposition \ref{prop:markov} (1) is   due  to  Chentsov \cite[Lemma 5.9, p.72]{Chentsov1972}. 
	
	2. Assume that  $T$ is a measurable mapping.
	To prove that  $ S_*(T)$ is a measurable  mapping, it suffices  to show   that
	for any  $f \in \Ff_s (\Yy)$ the composition $I_f \circ  S_*(T) : \Ss (\Xx) \to 
	\R$ is  measurable.  The latter  assertion  follows  from the   identity
	$I_f \circ  S_*(T) = I_{ T^* f}$, taking into account  that $T^*(f) \in \Ff_s (\Xx)$.  In the same    way  we  prove that  $S_*(T)$ is $\tau_w$-continuous, if $T$ is continuous  map between separable metrizable spaces.
	The statement  concerning $M_*(T)$ and  $S_*(T)$  is  a consequence  of the  first two  assertions of  Proposition \ref{prop:markov}(2).
	Note  that  the measurability of $P_*(T)$  has been  first  noticed by Lawvere  \cite{Lawvere1962}. 
	
	3. The first  identity  in (\ref{eq:fm})  is obvious,  the second  one  is a consequence  of the   first one.
	
	4.   The  linearity of  $ T^*$ and the inclusion $ T^* (L (\Yy) )  \subset L (\Xx)$ have been  proved in   \cite[Corollary, p. 66]{Chentsov1972}. Here  we  provide an alternative  shorter   proof of the  inclusion  relation.
	Let $ T ^*$   be defined by (\ref{eq:adjoint1})  and $f \in L(\Yy)$.  Since
	$$\sup _{x\in \Xx}| T^*(f) (x) |\le \sup_{y \in \Yy}|f(y)|$$
	the  function $T^* (f)$ is bounded on $\Xx$.
	
	Next  we shall  show that   
	for any $f \in L (\Yy)$ the map $ T^*(f):\Xx \to \R$  is measurable, i.e., for   any   Borel  set $I \subset \R$  we have $T^*(f)^{-1}(I) \in \Sigma_\Xx$. Note that
	$$T^*(f)^{-1} (I) = \{ x \in \Xx| \, T^* (f)(x) \in I , \text{ i.e., } I_f (\overline T (x)) \in I\} . $$
	Since $I_f: \Pp (\Yy) \to \R$ is measurable, the  set $I_f ^{-1}(I)$  is measurable.
	Since $\overline T: \Xx \to \Pp(\Yy)$ is measurable,  the  set
	$(T ^*(f))^{-1}(I) = \overline T^{-1} (I_f ^{-1} (I))$  is measurable.  Thus 
	$ T^* (L (\Yy) )  \subset L (\Xx)$. 	
	
	Now  we shall show that   $T^*: (L(\Yy), \Sigma_*)\to  (L(\Xx), \Sigma_*)$  is a measurable  map.  It  suffices  to show  that   for any $\mu \in \Pp (\Xx)$ the composition $ev_\mu \circ T^*: L(\Yy) \to \R$  is  a measurable map.  Let $I \subset \R$ be a Borel subset. Then 
	$$ ev_\mu ^{-1} ( T^*) ^{-1}(I) = \{   g \in L(\Yy)|\: \int_\Xx \int _\Yy  g  d \overline T (x)\, d\mu (x)\in I \}.$$
	Setting
	$$ \overline T _\mu : = \int_\Xx  \overline  T (x) d\mu \in \Pp (\Yy)$$ we get
	\begin{equation}
	\label{eq:meas1}
	ev^{-1}_\mu (T^*)^{-1}(I) =  ev _{\overline T _\mu} ^{-1} (I)
	\end{equation}
	which  is a measurable   subset  of $L(\Yy)$. Hence  $T^*: (L(\Yy), \Sigma_*)\to  (L(\Xx), \Sigma_*)$  is a measurable  map. 
	
	From (\ref{eq:meas1})  we also   deduce  that  $T^*: (L(\Yy), \Sigma_{ev})\to  (L(\Xx), \Sigma_{ev})$  is a measurable  map,
	since $ev_x(h) = ev_{\delta(x)}(h)$ for  all  $x\in \Xx$ and $h \in L(\Xx)$.
	%\color{black}
	This completes  the proof of  Proposition \ref{prop:markov}.
\end{proof}

\begin{proposition}\label{prop:tauv} (1) The projection $\pi_\Xx: \Mm^*(\Xx) \to \Pp (\Xx), \mu \mapsto \mu(\Xx)^{-1}\cdot \mu$, is a measurable  retraction.  If $\Xx$ is a topological space then $\pi_\Xx$ is $\tau_w$-continuous.
	
	(2)  If     $\Sigma_\Xx$  is countably generated, then  we have the following commutative diagram
	$$
	\xymatrix{\Pp^2 (\Xx) \ar[r]^{i_{p,m}} \ar[d]^{P_* (i_{p,m})} & \Mm(\Pp (\Xx))\ar[r]^{i_{m,s}}\ar[d]^{M_*( i_{p,m})} & \Ss(\Pp(\Xx))\ar[d]^{S_*( i_{p,m})}\\
		\Pp(\Mm(\Xx))\ar[r]^{i_{p,m}}\ar[d]^{P_*(i_{m,s})} &  \Mm^2(\Xx) \ar[r]^{i_{m,s}}\ar[d]^{M_*(i_{m,s})} & \Ss(\Mm(\Xx))\ar[d]^{S_*(i_{m,s})}\\
		\Pp(\Ss(\Xx)) \ar[r]^{i_{p,m}} & \Mm(\Ss(\Xx))\ar[r]^{i_{m,s}}&  \Ss^2(\Xx)
	}
	$$
	where all arrows are  measurable embeddings.   If  $\Xx$ is a  separable  metrizable space then the arrows are $\tau_w$-continuous embeddings.
\end{proposition}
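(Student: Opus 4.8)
The plan is to treat the two assertions separately, reducing everything to the generating maps $I_f$ and to the functorial facts already recorded in Propositions \ref{prop:sigmainduced} and \ref{prop:markov}. For part (1), I would first observe that $\pi_\Xx$ restricts to the identity on $\Pp(\Xx)\subset\Mm^*(\Xx)$, so it is automatically a retraction onto $\Pp(\Xx)$; only measurability (resp.\ continuity) remains. Since $\Sigma_w$ on $\Pp(\Xx)$ is generated by the maps $I_f$, $f\in\Ff_s(\Xx)$, it suffices to show each $I_f\circ\pi_\Xx$ is measurable, and this follows from the identity $I_f(\pi_\Xx(\mu))=I_f(\mu)/I_{1_\Xx}(\mu)$: both $I_f$ and $I_{1_\Xx}$ are measurable on $\Mm^*(\Xx)$ and the denominator $I_{1_\Xx}(\mu)=\mu(\Xx)$ is strictly positive there, so the quotient is measurable. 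If $\Xx$ is a topological space, the same identity with $f\in C_b(\Xx)$ (and the constant $1_\Xx\in C_b(\Xx)$) shows $I_f\circ\pi_\Xx$ is $\tau_w$-continuous, since division is continuous where the denominator does not vanish; hence $\pi_\Xx$ is $\tau_w$-continuous.

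For part (2), the first preparatory step is to propagate countable generation: if $\Sigma_\Xx=\sigma(\Aa_\Xx)$ for a countable algebra $\Aa_\Xx$, then $\Sigma_w$ on $\Ss(\Xx)$, and hence its restrictions to $\Mm(\Xx)$ and $\Pp(\Xx)$, is countably generated, being generated by the countable family $\{ev_A:A\in\Aa_\Xx\}$. I would verify this by a Dynkin/monotone-class argument: the collection of $A\in\Sigma_\Xx$ for which $ev_A$ is measurable with respect to $\sigma(\{ev_{A'}:A'\in\Aa_\Xx\})$ is a $\lambda$-system (using complementation via $ev_\Xx$, finite additivity, and continuity from below of signed measures) containing the $\pi$-system $\Aa_\Xx$, hence equal to $\Sigma_\Xx$. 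This lets me apply Proposition \ref{prop:sigmainduced}(1) at the second level, with base spaces $\Pp(\Xx)$, $\Mm(\Xx)$, $\Ss(\Xx)$, so that each horizontal inclusion $\Pp(Z)\hookrightarrow\Mm(Z)\hookrightarrow\Ss(Z)$ has measurable image. The second observation is that all three vertical operations $P_*(j)$, $M_*(j)$, $S_*(j)$ along a fixed base inclusion $j$ (either $i_{p,m}$ or $i_{m,s}$) are restrictions of the single map $\lambda\mapsto\lambda\circ j^{-1}$, by Proposition \ref{prop:markov}(1).

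Commutativity of each of the four squares is then immediate: the horizontal arrows leave the underlying measure unchanged (they only reinterpret it in a larger space of measures), while the vertical arrows act by the single pushforward $\lambda\mapsto\lambda\circ j^{-1}$ independently of whether $\lambda$ is viewed as probability, nonnegative, or signed, so the two composites around a square coincide. For the embedding claim I would argue arrow by arrow. The horizontal inclusions are injective and measurable, with measurable image by the step above and with subspace $\sigma$-algebra agreeing with $\Sigma_w$ by definition of the restriction, hence are measurable isomorphisms onto their images. The vertical arrows $F_*(j)$ are measurable by Proposition \ref{prop:markov}(2); they are injective because $j$ is an injection with measurable image and measurable inverse; their image is $\{\lambda:\lambda(\text{complement of the image of }j)=0\}=ev^{-1}(\{0\})$, which is measurable; and the inverse is the pushforward $F_*(j^{-1})$, again measurable by Proposition \ref{prop:markov}(2). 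Hence every arrow is a measurable embedding.

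Finally, in the separable metrizable case each base inclusion $j$ is a topological embedding for $\tau_w$, since $\tau_w$ on the smaller space is precisely the subspace topology induced from the larger one. To obtain $\tau_w$-continuity of $F_*(j)$ I would argue directly rather than through metrizability of the target: for $\phi\in C_b$ of the target base one has $I_\phi\circ F_*(j)=I_{\phi\circ j}$ with $\phi\circ j$ bounded continuous, so $I_\phi\circ F_*(j)$ is $\tau_w$-continuous, and the same applied to $j^{-1}$ gives continuity of the inverse. This direct route is exactly what circumvents the fact that $(\Ss(\Xx),\tau_w)$ is non-metrizable for infinite $\Xx$, so Proposition \ref{prop:markov}(2) cannot be invoked verbatim on the outer columns. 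The main obstacle, in my view, is the preparatory propagation of countable generation together with the verification that every image and every inverse is measurable; once these are in place, both commutativity and $\tau_w$-continuity follow formally from Propositions \ref{prop:sigmainduced} and \ref{prop:markov}.
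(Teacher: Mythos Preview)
Your proof follows essentially the same route as the paper's---reducing both parts to the generating maps $I_f$ and invoking Propositions \ref{prop:sigmainduced} and \ref{prop:markov}---but you are considerably more careful: you supply the Dynkin argument propagating countable generation of $\Sigma_\Xx$ to $\Sigma_w$ (which the paper tacitly assumes when it applies Proposition \ref{prop:sigmainduced}(1) at the second level), you verify the embedding property (injectivity, measurable image, measurable inverse) explicitly rather than leaving it implicit, and you correctly observe that Proposition \ref{prop:markov}(2) as stated requires separable metrizable source and target while $(\Ss(\Xx),\tau_w)$ is non-metrizable for infinite $\Xx$, circumventing this via the identity $I_\phi\circ F_*(j)=I_{\phi\circ j}$. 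The paper's proof is terse on all three points, treating them as routine; your additions are genuine and correct refinements rather than a different strategy.
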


\begin{proof} 1. Clearly $\pi_\Xx$ is a retraction. The map $\pi_\Xx$ is measurable, since 
	the  mapping  $I_{1_\Xx}: \Mm^*(\Xx) \to \R_{> 0}, \: \mu \mapsto \mu(\Xx),$ is measurable, and hence  the map $(I_{1_\Xx})_{-1}:\Mm^* (\Xx)\to \R_{> 0}, \: 
	\mu \mapsto  \mu(\Xx)^{-1},$ is also measurable. Similarly we prove
	that $\pi_\Xx$ is continuous in  the $\tau_w$-topology, if $\Xx$, $\Yy$ are topological spaces.
	
	2. The measurability of the  horizontal mappings $i_{p,m}$ and $i_{m,s}$ follows from
	Proposition \ref{prop:sigmainduced}(1).   If $\Xx$ is a separable metrizable   space,  these  maps
	are inclusion  maps, and hence   they are continuous by definition.
	
	The measurability  and the continuity of the vertical mappings $S_*(i_{p,m})$ and
	$S_*{(i_{m,s})}$
	follow  from the  measurability and the continuity of the maps $i_{p,m}$ , $i_{m, s}$, respectively,  and  Proposition \ref{prop:markov}(2).  The measurability  and the continuity of  the vertical mappings $P_*(i_{p,m})$,
	$S_*{(i_{m,s})}$  and  $M_*(i_{p,m})$, $M_*(i_{m,s})$  follow from the  corresponding
	assertion concerning  the functor $P_*$,  the measurability and  the continuity for the horizontal mappings, respectively.
	
	3. The  commutativity  of the diagram  is   obvious. This completes  the  proof of Proposition  \ref{prop:tauv}.
\end{proof}

Let us recall the following  result due to Giry.

\begin{lemma}\label{lem:ev2} ( \cite{Giry1982}) Let $ev_P: \Pp^2(\Xx)\to  \Pp(\Xx)$ be   defined  by
	\begin{equation}\label{eq:evS} 
	ev_P( \nu_\Pp) (A): = \int_{\Pp(\Xx)}I_{ 1_A}(\mu) \,  d\nu_\Pp (\mu)
	\end{equation}
	for  all $\nu _\Pp \in \Pp^2(\Xx)$ and $A \in \Sigma_\Xx$. 
	
	(1)  The  composition  $ev_P \circ  \delta:  \Pp (\Xx) \to \Pp (\Xx)$ 
	is  the identity  map.
	
	(2) Assume  that $\kappa: \Xx \to \Yy$  is a measurable mapping. Then we have the following commutative diagrams 
		$$
	\xymatrix{  	\Pp^2 (\Xx)\ar[r] ^{P_*^2 (\kappa)}  \ar[d]^{ ev_P} &  \Pp^2(\Yy)\ar[d] ^{ev_P}\\
		\Pp(\Xx)\ar[r]^{P_*(\kappa)}& \Pp(\Yy).
	}
	$$
	
	(3)  The   mapping  $ev_P$ is a   measurable mapping.  It  is 
	$\tau_w$-continuous, if  $\Xx$  is  a separable  metrizable  space. 
\end{lemma}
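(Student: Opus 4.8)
The plan is to dispatch the three assertions in turn, handling (1) and (2) by direct computation and reducing (3) to a single Fubini-type identity. For assertion (1), I would use that $\delta(p)=\delta_p\in\Pp^2(\Xx)$ is the Dirac measure concentrated at $p\in\Pp(\Xx)$, so the defining formula (\ref{eq:evS}) collapses to an evaluation:
$$ev_P(\delta_p)(A)=\int_{\Pp(\Xx)}I_{1_A}(\mu)\,d\delta_p(\mu)=I_{1_A}(p)=p(A)$$
for every $A\in\Sigma_\Xx$. Hence $ev_P(\delta_p)=p$, i.e. $ev_P\circ\delta=Id_{\Pp(\Xx)}$.

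For assertion (2), recall that for the measurable map $\kappa$ one has $P_*(\kappa)(\mu)=\mu\circ\kappa^{-1}$, and $P_*^2(\kappa)=P_*(P_*(\kappa))$ is the pushforward along the measurable map $P_*(\kappa)\colon\Pp(\Xx)\to\Pp(\Yy)$ (Proposition \ref{prop:markov}(2)). I would evaluate both composites on an arbitrary $B\in\Sigma_\Yy$. Along the top-then-right route, the change-of-variables formula for pushforward measures gives
$$ev_P(P_*^2(\kappa)(\nu_\Pp))(B)=\int_{\Pp(\Yy)}\nu(B)\,d(P_*^2(\kappa)(\nu_\Pp))(\nu)=\int_{\Pp(\Xx)}(P_*(\kappa)(\mu))(B)\,d\nu_\Pp(\mu)=\int_{\Pp(\Xx)}\mu(\kappa^{-1}(B))\,d\nu_\Pp(\mu),$$
while along the left-then-bottom route
$$(P_*(\kappa)(ev_P(\nu_\Pp)))(B)=ev_P(\nu_\Pp)(\kappa^{-1}(B))=\int_{\Pp(\Xx)}\mu(\kappa^{-1}(B))\,d\nu_\Pp(\mu).$$
The two expressions agree for all $B$, so the square commutes.

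For assertion (3), the technical heart is the identity $I_f\circ ev_P=I_{I_f}$ on $\Pp^2(\Xx)$, i.e. $\int_\Xx f\,d(ev_P(\nu_\Pp))=\int_{\Pp(\Xx)}I_f(\mu)\,d\nu_\Pp(\mu)$ for $f\in L(\Xx)$. For $f=1_A$ this is exactly (\ref{eq:evS}); I would extend it to $\Ff_s(\Xx)$ by linearity and then to all of $L(\Xx)$ by a monotone class / dominated convergence argument. This last step is the one place I expect to need care: one must check that $\mu\mapsto I_f(\mu)$ lies in $L(\Pp(\Xx))$ and justify interchanging the two integrations. Granting the identity, the rest is formal. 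Taking $f=1_A$ gives $I_{1_A}\in L(\Pp(\Xx))$, so $I_{1_A}\circ ev_P=I_{I_{1_A}}$ is $\Sigma_w$-measurable on $\Pp^2(\Xx)$; since the maps $\{I_{1_A}\}_{A\in\Sigma_\Xx}$ generate $\Sigma_w$ on $\Pp(\Xx)$ (Remark \ref{rem:sigmaw}), $ev_P$ is measurable. For $\tau_w$-continuity when $\Xx$ is separable metrizable, take $f\in C_b(\Xx)$; then $I_f\in C_b(\Pp(\Xx))$ by definition of $\tau_w$, and $(\Pp(\Xx),\tau_w)$ is itself separable metrizable, so $I_f\circ ev_P=I_{I_f}$ is $\tau_w$-continuous on $\Pp^2(\Xx)$. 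As such maps generate $\tau_w$ on $\Pp(\Xx)$ as $f$ ranges over $C_b(\Xx)$, the map $ev_P$ is $\tau_w$-continuous.
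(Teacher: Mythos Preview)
Your proof is correct. The paper does not give its own proof of this lemma but attributes it to Giry \cite{Giry1982}, noting only that Giry's argument for Polish spaces carries over to separable metrizable spaces; your approach---direct verification of (1) and (2) by unwinding the definitions, and reducing (3) to the Fubini-type identity $I_f\circ ev_P=I_{I_f}$ so that measurability and $\tau_w$-continuity follow from the generating families for $\Sigma_w$ and $\tau_w$---is precisely the standard (Giry) argument.
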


Note that  Giry    considered   the smaller category of Polish spaces   but  his  proof   is also valid for the  category of separable  metrizable   spaces.

\begin{theorem}\label{thm:cat} (1) Probabilistic morphisms  $T$ are morphisms in the category of  measurable  spaces $\Xx$. Furthermore 
	$(\delta, M_*)$ and  $(\delta, P_*)$ are  faithful functors  from the category  of  measurable spaces  whose morphisms are probabilistic  morphisms to the   category of         nonnegative  finite  measure spaces     and    the category of probability measure spaces  respectively, whose  morphisms are measurable mappings.  If $\overline T: \Xx \to \Pp(\Yy)$ is a continuous mapping  between  separable metrizable spaces then   $M_*(T): \Mm (\Xx) \to \Mm(\Yy)$ and  $P_*(T): \Pp (\Xx) \to \Pp(\Yy)$ are $\tau_w$-continuous. 
	
	(2)  If $\nu \ll \mu\in  \Mm^*(\Xx)$ then  $M_*(T)(\nu)  \ll M_*(T) (\mu)$.
\end{theorem}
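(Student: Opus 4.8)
The plan is to first equip the measurable spaces together with probabilistic morphisms with the structure of a category, and then to check that $P_*$ and $M_*$ transport this structure faithfully and, under the extra hypothesis, continuously. For the composite of $T_1\colon\Xx\leadsto\Yy$ and $T_2\colon\Yy\leadsto\Zz$ I would set $\overline{T_2\circ T_1}:=ev_P\circ P_*(\overline{T_2})\circ\overline{T_1}\colon\Xx\to\Pp(\Zz)$, which is measurable by Lemma~\ref{lem:ev2}(3) and Proposition~\ref{prop:markov}(2); unwinding (\ref{eq:evS}) yields the Chapman--Kolmogorov formula $\overline{T_2\circ T_1}(x)(C)=\int_\Yy\overline{T_2}(y)(C)\,d\overline{T_1}(x)(y)$ for $C\in\Sigma_\Zz$. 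The identity arrow on $\Xx$ is the probabilistic morphism $\underline{\delta}_\Xx$ generated by the Dirac map $\delta$ of Remark~\ref{rem:Lxsigma}(1).

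The analytic tool used throughout is the integration identity: for a probabilistic morphism $T\colon\Xx\leadsto\Yy$, a measure $\mu$ on $\Xx$, and a bounded measurable $g$ on $\Yy$, one has $\int_\Yy g\,d\big(S_*(T)\mu\big)=\int_\Xx\big(\int_\Yy g\,d\overline{T}(x)\big)\,d\mu(x)=\int_\Xx T^*(g)(x)\,d\mu(x)$. For $g=1_B$ this is exactly the definition (\ref{eq:markov1}), and it extends to all bounded measurable $g$ by linearity and monotone convergence, the inner integral $x\mapsto T^*(g)(x)$ being bounded and measurable by Proposition~\ref{prop:markov}(4). Granting this identity, the unit laws follow from Lemma~\ref{lem:ev2}(1) together with the naturality $P_*(\kappa)\circ\delta=\delta\circ\kappa$, and associativity of $\circ$ reduces, for a third arrow $T_3\colon\Zz\leadsto\Ww$ and $D\in\Sigma_\Ww$, to the iterated--integral identity $\int_\Yy\big(\int_\Zz g\,d\overline{T_2}(y)\big)\,d\overline{T_1}(x)=\int_\Zz g\,d\overline{T_2\circ T_1}(x)$ applied to $g(z)=\overline{T_3}(z)(D)$. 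I expect this verification of associativity---tracking the joint measurability that legitimizes the interchange---to be the main bookkeeping obstacle; once the integration identity is available, everything else is formal.

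Functoriality of the two assignments---$\Xx\mapsto\Pp(\Xx)$, $T\mapsto P_*(T)$, and $\Xx\mapsto\Mm(\Xx)$, $T\mapsto M_*(T)$---then follows: $P_*(\underline{\delta}_\Xx)=\mathrm{id}_{\Pp(\Xx)}$ by the computation $\int_\Xx\delta_x(B)\,d\mu(x)=\mu(B)$, while $P_*(T_2\circ T_1)=P_*(T_2)\circ P_*(T_1)$ is precisely the integration identity combined with the Chapman--Kolmogorov formula, and the same computations apply to $M_*$. Faithfulness is immediate from $P_*(T)(\delta_x)(B)=\overline{T}(x)(B)$, i.e. $P_*(T)\circ\delta_\Xx=\overline{T}$: two arrows with the same image under $P_*$ (resp. $M_*$, using $\delta_x\in\Mm(\Xx)$) have the same generating map and hence coincide.

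For the continuity claim, assume $\overline{T}\colon\Xx\to\Pp(\Yy)$ is continuous between separable metrizable spaces. By the defining property of $\tau_w$ it suffices to show that $\mu\mapsto I_f\big(P_*(T)\mu\big)$ is $\tau_w$-continuous for each $f\in C_b(\Yy)$; but the integration identity gives $I_f\big(P_*(T)\mu\big)=I_{T^*f}(\mu)$ with $T^*f=I_f\circ\overline{T}$, which lies in $C_b(\Xx)$ since $I_f$ is $\tau_w$-continuous and $\overline{T}$ is continuous, whence $I_{T^*f}$ is $\tau_w$-continuous; the argument for $M_*$ is identical. Finally, for part (2), if $M_*(T)(\mu)(B)=\int_\Xx\overline{T}(x)(B)\,d\mu(x)=0$ then the nonnegative integrand $\overline{T}(x)(B)$ vanishes for $\mu$-almost every $x$; as $\nu\ll\mu$ it vanishes $\nu$-almost everywhere as well, so $M_*(T)(\nu)(B)=\int_\Xx\overline{T}(x)(B)\,d\nu(x)=0$, which proves $M_*(T)(\nu)\ll M_*(T)(\mu)$.
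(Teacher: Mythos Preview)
Your proof is correct, but it proceeds along a different axis than the paper's. The paper works categorically: it invokes Giry's result that $(P_*,\delta,ev_P)$ is a monad, then proves the functor identity $P_*(T_3\circ T_2)=P_*(T_3)\circ P_*(T_2)$ via the monad laws, chiefly the factorization $P_*(T)=ev_P\circ P_*(\overline{T})$ and the associativity identity $ev_P\circ P_*(ev_P)=ev_P\circ ev_P$, together with the naturality of $ev_P$; associativity of $\circ$ is then derived from functoriality. Continuity is obtained from the same factorization, composing the $\tau_w$-continuity of $P_*(\overline{T})$ (Proposition~\ref{prop:markov}(2)) with that of $ev_P$ (Lemma~\ref{lem:ev2}(3)). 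For part~(2) the paper again uses $P_*(T)=ev_P\circ P_*(\overline{T})$ to reduce to the evident fact $ev_P(\nu)\ll ev_P(\mu)$.

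You instead take the integration identity $\int g\,d(S_*(T)\mu)=\int T^*(g)\,d\mu$ as the engine and push through associativity and functoriality by direct iterated-integral (Chapman--Kolmogorov) computations; your continuity argument, showing $T^*f=I_f\circ\overline{T}\in C_b(\Xx)$ so that $I_f\circ P_*(T)=I_{T^*f}$ is $\tau_w$-continuous, is a clean one-line alternative to the paper's factorization; and your proof of~(2) via $\mu$-a.e.\ vanishing of $\overline{T}(\cdot)(B)$ is more elementary than the paper's. What the paper's route buys is an explicit display of the monad structure and its reuse of Lemma~\ref{lem:ev2}; what yours buys is self-containedness and avoidance of the diagram chase. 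Faithfulness is argued identically in both.
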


\begin{proof}  1. The statement of Theorem   \ref{thm:cat}(1)  concerning    the functor  $P_*$ without mentioning its faithfulness is a consequence  of  Giry's theorem  stating  that  the  triple
	$(P_*, \delta, ev_P)$ is a    monad  in the   category  of measurable
	spaces  whose morphisms  are  measurable mappings and    in the category  of Polish  spaces  whose morphisms are    continuous  mappings,  respectively,  see also   the remark  before  Theorem \ref{thm:cat}  and Remark  \ref{rem:diag} below) \cite[Theorem 1, p. 70]{Giry1982} and his    observation  that  the Kleisli category of the monad $(P_*, \delta, ev_P)$ is  the  category  of  measurable  spaces whose morphisms are    probabilistic morphisms. In other words, Giry's theorem  says  that   Theorem
	\ref{thm:cat}  is valid  in the subcategory  of measurable  spaces  whose morphisms are  measurable mappings,  $\delta  $ is  a natural transformation  of the  functor $Id_\Xx$ to the functor $P_*$ on this   subcategory and $ev_P$  is   a natural  transformation  of the functor $P_*^2 $  to  the the functor $P_*$ on  the same subcategory.  Note that the last  assertion  is  equivalent  to  the statement  of Lemma \ref{lem:ev2}(2)
	(the associativity of $ev_P$  follows from the  identity (\ref{eq:assm}) below).
	
	Taking into account  Giry's theorem,  the statement  of  Theorem \ref{thm:cat}   concerning  the functoriality (without faithfullness) of $P_*$  is  a  consequence   of  \cite[Theorem 1, p. 143]{MacLane1994}   on the structure  of  the Kleisli category  of  a monad.  
	Since  the proof  of \cite[Theorem 1, p.143]{MacLane1994}  is  only sketched,   we shall   prove Proposition \ref{prop:funct2} below,  which proves       the functoriality  of     $P_*$. Our  proof  has also a different  strategy than  the one  sketched in \cite[p. 143]{MacLane1994}.  Since  $M_*(T)(c \cdot \mu) = c \cdot  M_* ( T)(\mu)$ for any $c \in \R_{\ge 0}$ and  $\mu \in \Pp (\Xx)$,  the functoriality of   $M_*(T)$   is a consequence  of the   functoriality  of  $P_*(T)$,  and   the measurability of   $M_*(T)$
	is a  consequence  of  measurability of  $P_*(T)$.  We   provide  furthermore   a  new categorical proof for  the  associativity of   the composition of   the Markov kernels (Proposition \ref{prop:ass}).  
	
	\begin{proposition}\label{prop:funct2}   Let $ T_i: \Xx_i \leadsto \Xx_{i+1}$  be   probabilistic morphisms for $i = 2,3$. Then we have
		\begin{equation}\label{eq:pind}
		P_*(T_3 \circ T_2)= P_*(T_3) \circ P_*(T_2).
		\end{equation}
	\end{proposition} 
	\begin{proof} The  proof consists  of  two steps.  In the first  step, using (\ref{eq:markov1})  we shall   prove relations  (\ref{eq:pinduce1}) and (\ref{eq:compose})  below. (If we  want to construct  the Kleisli category of a monad, we  take  these  formulas  as  defining relations for the functor $P_*$  and for  the  composition rule of morphisms in the Kleisli category.)  Let $T$  and  $T_i : \Xx_i \leadsto \Xx_{i+1}$  be  probabilistic morphisms for $i =1, 2$. Then
		
		\begin{equation}\label{eq:pinduce1} 
		P_*(T) =  ev_P \circ P_*(\overline T),
		\end{equation}

		\begin{equation}\label{eq:compose}
		\overline{T_2\circ T_1}  = P_*( T_2) \circ \overline{ T_1}. 
		\end{equation}	 
		
		Note that   (\ref{eq:pinduce1}) is  a consequence  of  the  following  straightforward  computation for any $B \in \Sigma_\Yy$  and any $\mu \in \Pp (\Xx)$, 
		$$ ev _P \circ P_* (\overline T)  (\mu) (B) = \int_{\Pp (\Yy)}I_{1_B} dP_* (\overline T)\mu = \int_{\Xx}\overline T (x) (B)d\mu (x)= P_*(T) (\mu) (B).$$ 
		
		We leave  the reader  to verify   (\ref{eq:compose}), which   has been  verified  by Giry   \cite{Giry1982} as we   mentioned   above,    (we prefer  to  consider  (\ref{eq:compose}) as  a defining relation  and hence  we  did not recall  the known composition rule for  Markov kernels in this paper).
		
		In the second  step we examine  the following  diagram:
		\begin{eqnarray}\label{eq:*diagr2}
		\xymatrix{
			& & &  \Pp ^3 (\Xx_4)\ar[d]^{P_*(ev_P)}\\
			&  & \Pp^2  (\Xx_3)\ar[ur]^{P_* ^2(\overline {T_3})} \ar[r]^{P_* ^2 (T_3)}\ar[d]^{ev_P} & \Pp^2 (\Xx_4)\ar[d]^{ev_P}\\
			& \Pp(\Xx_2)\ar[ur]^{P_*(\overline {T_2})}\ar[r]^{P_* (T_2)} \ar@{~>}[d]^{ev} &
			\Pp(\Xx_3)\ar[ur]^{P_*(\overline {T_3})}\ar[r]^{P_* (T_3)}  \ar@{~>}[d]^{ev} & 
			\Pp(\Xx_4) \ar@{~>}[d]^{ev}\\
			X_1 \ar[ur]^{ \overline {T_1}}\ar@{~>}[r] ^{T_1} & X_2 \ar[ur]^{\overline {T_2}}\ar@{~>}[r] ^{T_2}&  X_3 \ar[ur]^{\overline {T_3}}\ar@{~>}[r] ^{T_3} & \Xx_4,
		}
		\end{eqnarray}
		By  (\ref{eq:pinduce1}), (\ref{eq:compose}), and using  (\ref{eq:fm}), we have 
		\begin{eqnarray}
		P_*(T_3\circ T_2)= ev_P \circ P_* (\overline{T_3 \circ T_2})\nonumber\\
		= ev_P \circ P_* ( P_*(T_3) \circ \overline{T_2})\nonumber\\
		= ev_P  \circ P_* (ev_P) \circ  P^2_* (\overline T_3)\circ P_* (\overline T_2).\label{eq:comm3}
		\end{eqnarray}
		
		\begin{lemma}\label{lem:funct2} (\cite[p. 71]{Giry1982})  We have   the following  identity
			\begin{equation}\label{eq:assm}
			ev_P\circ P_*(ev_P)  = ev_P \circ ev_P.
			\end{equation}
		\end{lemma}
		From Lemma \ref{lem:ev2} (3)  we  obtain immediately  the following identity
		\begin{equation}\label{eq:commse}
		ev_P\circ  P^2_*   (\overline T)= P_* (\overline T)\circ  ev_P.
		\end{equation}

		Plugging  (\ref{eq:assm})  and (\ref{eq:commse}) into  (\ref{eq:comm3}), 
		taking into account  (\ref{eq:pinduce1}),  we obtain  immediately  the first  identity  of  (\ref{eq:pind}), which also implies  the  second one. This completes  the proof  of  Proposition  \ref{prop:funct2}.
	\end{proof}
	
	\begin{proposition}\label{prop:ass} Let $ T_i: \Xx_i \leadsto \Xx_{i+1}$  be   probabilistic morphisms  for $i =1, 2,3$. Then we have
		\begin{equation}\label{eq:ass}
		\overline{T_3\circ (T_2 \circ T_1)} = \overline{(T_3 \circ T_2) \circ T_1}.
		\end{equation}
	\end{proposition}
	\begin{proof} Proposition \ref{prop:ass} follows from known  properties  of Markov  kernels. Here  we give another  short algebraic (categorical)  proof. Straightforward computations  using (\ref{eq:compose}) and (\ref{eq:pinduce1})  yield
		\begin{equation}\label{eq:compose1}
		\overline{T_3 \circ (T_2 \circ T_1)} = P_*(T_3) \circ \overline{(T_2 \circ T_1)} = P_*(T_3) \circ P_*(T_2) \circ \overline{T_1},
		\end{equation}
		\begin{equation}
		\label{eq:compose2}
		\overline{(T_3 \circ T_2)\circ T_1} = P_*(T_3 \circ T_2) \circ \overline{T_1}.
		\end{equation}
		By Proposition \ref{prop:funct2}  the LHS of (\ref{eq:compose1})   equals the LHS of  (\ref{eq:compose2}). This  completes  the proof  of  Proposition \ref{prop:ass}.
	\end{proof}                    
	Combining  Proposition \ref{prop:funct2} with Propositions \ref{prop:ass}  and  \ref{prop:markov}, we prove  the    assertion  of  Theorem \ref{thm:cat}(1)   concerning the functoriality  of $(\delta, P_*)$ and $(\delta, M_*)$.
	
	Clearly  $\delta: \Xx \to \Pp (\Xx)$  is an injective map.  Using  the following  equality
	$$ S_*(T)(\delta_x) = \overline{T}   (x) \in \Pp (\Yy)$$
	for any $x\in \Xx$
	we conclude   that  $P_*$ and $M_*$  are injective   functors. This completes  the proof  of  Theorem \ref{thm:cat}(1).
	%\color{black}
	
	\
	
	2.  Theorem  \ref{thm:cat}(2)  has been  proved by Morse-Sacksteder \cite[Proposition 5.1]{MS1966}.   We now present an alternative  proof that is   a bit  shorter.  By  the     (\ref{eq:pinduce1})  and   the   validity  of  Theorem \ref{thm:cat}(2)  in the case $T$  is a   measurable  mapping, it suffices  to verify    that $ev_P(\nu) \ll ev_P (\mu)$  which is obvious.
	This completes  the proof  of Theorem \ref{thm:cat}.
\end{proof}

%\color{blue}

\begin{remark}\label{rem:diag}
	
(1)	Theorem  \ref{thm:cat} suggests  that   to study   the category $PMeas$  of   measurable  spaces  whose morphisms are probabilistic morphims  we can  consider  its embedding into  the  category  $(\Pp(\Xx),\Sigma_w)$  whose morphisms are
	generated  by measurable  mappings  between  $\Xx$  and $\Yy$, which  are    the restriction of   linear bounded  mappings  between  Banach spaces $\Ss(\Xx)$  and $\Ss(\Yy)$  endowed with the total variation norm.

	(2) Lawvere   introduced  the $\sigma$-algebra $\Sigma_w$  on $\Pp(\Xx)$ and defined the probabilistic  map $ev$ in terms of Markov kernels  \cite{Lawvere1962}.  He coined the term  of a ``probabilistic mapping" and noted  that it is equivalent to the notion of  a  Markov kernel.  He also  defined   $P_*(T)$     by  a formula equivalent to (\ref{eq:markov1}).  
	In \cite{Giry1982} Giry   considered the  category   of measurable spaces  whose morphisms  are measurable mappings  and the category  of Polish spaces $\Xx$  whose morphisms are continuous mappings. He noticed that  the space
	$(\Pp(\Xx), \tau_w)$ is again Polish  cf. \cite[Theorem 2.3]{GH1989}.
	He proved Theorem \ref{thm:cat}  for  the case that  $P$ is a measurable mapping and $\Xx$ is a Polish  space. 	The triple $(P_*, \delta, ev_P)$ is now called  {\it  Giry's probability monad}. 
	Giry noticed that the Kleisli category  of  the  probability 
	monad $(P_*,\delta, ev_P)$ is the category  of  measurable  spaces
	whose morphisms are  probabilistic  morphisms. We   refer the  reader  to \cite{FP2018}, \cite{FP2018} for further investigation  of probability monads. Panagaden \cite{Panangaden1999}, \cite{Panangaden2009}    extended  the Giry theory to the category SRel
	(stochastic  relations)  whose  objects  are measurable spaces and
	whose  morphisms  are transition  subprobabilities, but he did not use the concept of  $\sigma$-algebra on the space of   subprobabilities.
	Recently Fritz  extended   the  category  of  probabilistic morphisms    to the Markov category \cite{Fritz2020}, using high-level axioms,   following works of Golubtsov, especially \cite{Golubtsov2002}, and the work by Cho and Jacobs \cite{CJ2019}. Parzygnat  extended  Fritz's theory  further \cite{Parzygnat2020}, in particular,  to  include  quantum probabilities.

	(3)  Chentsov  called the   category  of  Markov kernels        {\it the   statistical category} and their morphisms -  {\it Markov morphisms} \cite{Chentsov1965, Chentsov1972}.
	Chentsov also  showed  that  $S_*(T)$ is a linear  operator  with  $|| S_*(T)|| =1 $  for any  Markov kernel $T$ and   that  $S_*(T)$  sends  a probability measure to a probability measure \cite[Lemma 5.9, p.72]{Chentsov1972}.   Markov morphisms $S_*(T)$   have been  investigated  further  in \cite{AJLS2015}, \cite{AJLS2017}, \cite{AJLS2018}.
	
	(4) Historically,   Blackwell   was  the first      who  considered    probabilistic morphisms, which he        called {\it stochastic mappings}, between parameterized statistical  models,   which  he called  statistical    experiments \cite{Blackwell1953}. LeCam  \cite{LeCam1964}  used    the  equivalent  terminology   {\it randomized mapping}    and  Chentsov \cite{Chentsov1965}  used   the equivalent  notion of   {\it transition  measure}.
\end{remark}

\subsection{Statistical models and sufficient  probabilistic morphisms}\label{sec:suff}

Given a probabilistic morphism $T$  (for instance,  a  measurable mapping $\kappa$), we shall also use the short notation $T_*$ and $\kappa_*$  for $M_*(T)$ and  $M_* (\kappa)$, respectively.

\begin{definition}
	\label{def:cats} {\it  A  statistical model}  is a pair $(\Xx, P_\Xx)$ where   $\Xx$ is a  measurable space and $P_\Xx\subset \Pp(\Xx)$.  {\it The category of statistical  models}    consists  of  statistical   models as its objects   whose   morphisms $\varphi: (\Xx, P_\Xx) \leadsto (\Yy, P_\Yy)$  are probabilistic  morphisms  $T:\Xx\leadsto \Yy$    such that $T_*(P_\Xx)\subset  P_\Yy$.  A morphism  $T: (\Xx, P_\Xx) \leadsto  (\Xx, P_\Xx)$ will be called {\it a unit},  if $T_*: P_\Xx \to P_\Xx$ is the  identity.  Two   statistical  models
	$(\Xx, P_\Xx)$ and $(\Yy, P_\Yy)$ are called  {\it equivalent}, if there   exist  morphisms  $T_1: (\Xx, P_\Xx) \leadsto (\Yy, P_\Yy)$   and $T_2: (\Yy, P_\Yy) \leadsto (\Xx, P_\Xx)$ such that  $T_1 \circ  T_2 $ and $T_2\circ T_1$ are units.  In this  case
	$T_1$  and $T_2$  will be called {\it equivalences}.
\end{definition}

\begin{remark}
	\label{rem:cats}  (1)  The Kleisli category of Giry's probability monad, also called  the Kleisli category  of probabilistic   morphisms,    can be realized  as  a
	subcategory  of the category  of   statistical models  by assigning  each   measurable space
	$\Xx$ the pair $(\Xx, \Pp(\Xx))$.  
	
	(2) The  notion of  statistical models  and their morphisms  in Definition \ref{def:cats} is almost equivalent  to the notion of   statistical    systems and their   morphisms, called  statistical operations,    introduced by Morse-Sacksteder \cite{MS1966}, except    that  Morse-Sacksteder  allowed     morphisms  that need  not to be  induced  from  probabilistic morphisms (they  considered  conditional probability as defining statistical operations),  and almost    coincides  with    the Markov category  of  family of probability  distributions introduced  by Chentsov \cite[\S 6, p. 76]{Chentsov1972},   except   that    Chentsov considered   only morphisms between    statistical models   whose   probability  sets $P_\Xx, P_\Yy$   are  parameterized   by   the same   set $\Theta$.
\end{remark}
Among       probabilistic morphisms
defining equivalences  between statistical models,  there  is an important  class  of   sufficient  morphisms,  introduced by Morse-Sacksteder, which  we reformulate as   follows.

\begin{definition}\label{def:suff} (cf. \cite{MS1966}) 
	A morphism  $T: (\Xx, P_\Xx) \leadsto (\Yy, P_\Yy)$  between statistical models   will be called {\it sufficient}  if  there exists a  probabilistic
	morphism $\underline \pb : \Yy \leadsto \Xx$ such that  for all $\mu \in P_\Xx$   and  $h \in L(\Xx)$ we have
	\begin{equation}
	\label{eq:suff}
	T_*(h \mu) = \underline{\pb}^*(h)T_*(\mu) \text{, i.e., }  \underline{\pb} ^* (h) = \frac{d T_* (h \mu)}{d T_* (\mu)}\in  L^1(\Yy, T_*(\mu)).
	\end{equation}
	In this case  we shall  call        $T: \Xx \leadsto \Yy$  {\it a  probabilistic morphism   sufficient  for $\Pp_\Xx$}  and   we  shall call  the  measurable  mapping $\pb: \Yy \to \Pp(\Xx)$  defining    the probabilistic  morphism $\underline {\pb}: \Yy \leadsto \Xx$
	{\it a conditional  mapping   for  $T$}.
\end{definition}

\begin{remark}\label{rem:suff0}  (1)  We call $\pb: \Yy \to \Pp (\Xx)$    a conditional mapping  because  of  the  interpretation  of $\pb$  as a regular  conditional   probability  in the case  $T$  is   a   measurable mapping, see  (\ref{eq:suffs})  below.
	
	(2) As in  Remark \ref{rem:sigmaw} (1),   we  note that    to  prove the sufficiency of  a probabilistic  morphism $T$   w.r.t. $P_\Xx \subset \Pp(\Xx)$  it suffices  to verify  (\ref{eq:suff})  for all $\mu \in P_\Xx$ and for  all  $h=  1_A$ where  $A \in \Sigma_\Xx$.
\end{remark}

\begin{example}
	\label{ex:suff}
	Assume  that $\kappa:(\Xx, P_\Xx) \leadsto (\Yy, P_\Yy)$  is a   sufficient morphism, where $\kappa: \Xx \to \Yy$  is a  statistic.  Let   $\pb:  \Yy \to \Pp (\Xx), \,  y \mapsto  \pb _y,$ be a conditional  mapping for $\kappa$. By (\ref{eq:adjoint1}),  $\underline\pb^* (1_A)(y)  =  \pb_y(A)$,  and  we  rewrite   (\ref{eq:suff})
	as follows
	\begin{equation}
	\label{eq:suffs}
	\pb_y(A)=  \frac{d\kappa_*(1_A\mu)}{d\kappa_*\mu}\in L^1(\Yy, \kappa_*(\mu)) .
	\end{equation}
	The RHS of (\ref{eq:suffs}) is  the conditional  measure of $\mu$  applied to  $A$  w.r.t. the  measurable mapping $\kappa$.  The   equality (\ref{eq:suffs})  implies  that this   conditional  measure is regular  and independent of  $\mu$. 
	Thus  the notion of sufficiency of $\kappa$  for  $P_\Xx$  coincides with the classical notion of   sufficiency  of      $\kappa$  for  $P_\Xx$, see e.g.,  \cite[p. 28]{Chentsov1972}, \cite[Definition 2.8, p. 85]{Schervish1997}.  We   also note that the equality  in (\ref{eq:suffs})  is understood  as  equivalence class  in $L^1(\Yy,  \kappa_*(\mu))$ and hence    every
	statistic $\kappa '$  that  coincides  with a    sufficient statistic   $\kappa$   except  on a zero $\mu$-measure   set,   for all  $\mu \in P_\Xx$,  is also   a sufficient  statistic  for  $P_\Xx$.
\end{example}

\begin{example}
	\label{ex:FN} (cf. \cite[Lemma 2.8, p. 28]{Chentsov1972}) Assume  that  $\mu \in \Pp(\Xx)$ has a regular  conditional    distribution    w.r.t.   to a statistic 
	$\kappa: \Xx \to \Yy$, i.e., there  exists  a measurable  mapping
	$\pb : \Yy \to \Pp(\Xx), y \mapsto \pb_y,$  such that
	\begin{equation}\label{eq:cond1}
	\E_\mu ^{\sigma(\kappa)}(1_A | y) = \pb_y(A) 
	\end{equation}
	for any $A \in \Sigma_\Xx$  and  $y \in \Yy$.  
	Let  $ \Theta$  be  a parameter set and  $P : =\{\nu_\theta  \in \Pp(\Xx) | \, \theta \in \Theta\}$  be   a  family of  probability measures  dominated  by $\mu$. If  there exist  a function
	$h : \Yy \times \Theta \to \R$  such  that  for all $\theta \in \Theta$  and  we have
	\begin{equation}\label{eq:NF}
	\nu_\theta = h(\kappa (x))\mu
	\end{equation}
	then $\kappa $ is sufficient  for  $P$, since    for any $\theta \in \Theta$ 
	$$\pb^* (1_A) = \frac{d\kappa_*  (1 _A  \nu_\theta)}{d\kappa_* \nu_\theta} $$
	does not depend on $\theta$.  The  condition (\ref{eq:NF})  is the Fisher-Neymann sufficiency condition  for a family  of  dominated
	measures \cite{Neyman1935}.   
\end{example}

Let us assume that  $T: (\Xx, P_\Xx) \leadsto  (\Yy, P_\Yy)$ is  a sufficient  
morphism
for $\mu \in \Pp(\Xx)$.  Then  for any $A \in \Sigma _\Xx$ we have
\begin{equation}
\label{eq:suff1}
\mu(A)  \stackrel{(\ref{eq:markov1})}{=}  T_*(1_A \mu) (\Yy)\stackrel{Theorem \, \ref{thm:cat}(2)}{=}\int_\Yy\frac{dT_*(1_A \mu)}{dT_*\mu}dT_*\mu \stackrel{(\ref{eq:suff})}{=} \int _\Yy  \pb_y  (A) d T_*(\mu).
\end{equation}
Comparing (\ref{eq:suff1})  with (\ref{eq:markov1}), we conclude   that  $\mu = \underline{\pb}_* \circ T_* (\mu)$.  Hence  $T_*\circ  \underline{\pb}_* (T_* \mu) = T_*\mu$.
It  follows  that $(\Xx, \mu)$ is  equivalent   to $(\Yy,  T_*(\mu))$. 
Hence  we  get the following
\begin{proposition}
	\label{prop:suffs}(\cite  [Proposition 5.2]{MS1966}) Two  statistical  models  $(\Xx, P_\Xx)$   and  $(\Yy, P_\Yy)$  are  equivalent  if there exists  a         probabilistic  morphism   $T: \Xx \leadsto \Yy$ such that  $T$ is sufficient for  $P_\Xx$    and  $T_*(P_\Xx) = P_\Yy$.   
\end{proposition}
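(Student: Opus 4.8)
The plan is to read off both required equivalences directly from the single-measure identity established immediately before the statement, taking the conditional morphism as the reverse arrow. Let $\pb\colon\Yy\to\Pp(\Xx)$ be a conditional mapping for $T$, with associated probabilistic morphism $\underline{\pb}\colon\Yy\leadsto\Xx$; by Definition~\ref{def:suff} this \emph{single} $\underline{\pb}$ witnesses sufficiency simultaneously for every $\mu\in P_\Xx$, which is exactly what is needed to treat it as one well-defined morphism rather than a $\mu$-dependent family. I would set $T_1:=T$ and $T_2:=\underline{\pb}$ and show that these are mutually inverse equivalences in the category of statistical models.

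First I would record the key retraction identity. The computation (\ref{eq:suff1}), which rests on the absolute-continuity statement Theorem~\ref{thm:cat}(2) to legitimize the Radon--Nikodym derivative $dT_*(1_A\mu)/dT_*\mu=\pb_y(A)$ and then integrate it against $T_*\mu$, gives, for each $\mu\in P_\Xx$ and each $A\in\Sigma_\Xx$,
$$\underline{\pb}_*\bigl(T_*\mu\bigr)(A)=\int_\Yy \pb_y(A)\,dT_*(\mu)=\mu(A),$$
so that $\underline{\pb}_*\circ T_*(\mu)=\mu$ on all of $P_\Xx$. Using this I would check that $T_1,T_2$ are genuine morphisms of statistical models: for $T_1=T$ this is the hypothesis $T_*(P_\Xx)=P_\Yy\subseteq P_\Yy$, while for $T_2=\underline{\pb}$ the identity together with surjectivity yields $\underline{\pb}_*(P_\Yy)=\underline{\pb}_*\bigl(T_*(P_\Xx)\bigr)=P_\Xx\subseteq P_\Xx$.

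It then remains to verify that the two composites are units. Invoking the functoriality of $M_*$ (Theorem~\ref{thm:cat}(1), established through Proposition~\ref{prop:funct2}), I have $(T_2\circ T_1)_*=\underline{\pb}_*\circ T_*$ and $(T_1\circ T_2)_*=T_*\circ\underline{\pb}_*$. On $P_\Xx$ the first composite is the identity directly by the key identity, so $T_2\circ T_1$ is a unit. For the second, I would again use surjectivity: any $\nu\in P_\Yy$ can be written $\nu=T_*\mu$ with $\mu\in P_\Xx$, and then $(T_1\circ T_2)_*(\nu)=T_*\bigl(\underline{\pb}_*(T_*\mu)\bigr)=T_*(\mu)=\nu$, so $T_1\circ T_2$ is a unit on $P_\Yy$. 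Hence $T_1$ and $T_2$ are equivalences and the two models are equivalent.

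The only place where the hypotheses do real work, rather than soft categorical bookkeeping, is the \emph{equality} $T_*(P_\Xx)=P_\Yy$ (not merely inclusion): it is what allows me to represent an arbitrary $\nu\in P_\Yy$ as a pushforward $T_*\mu$ in the $P_\Yy$-side computation, and it is also what forces $\underline{\pb}_*(P_\Yy)=P_\Xx$. I do not expect any serious obstacle beyond keeping this surjectivity in view, since the analytic heart of the argument, namely converting sufficiency into the retraction identity $\underline{\pb}_*\circ T_*=\mathrm{id}$, is already carried out in (\ref{eq:suff1}); everything after that is a matter of checking the morphism condition $T_*(P)\subseteq P'$ and unwinding the definition of a unit via functoriality of $M_*$.
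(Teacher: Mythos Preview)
Your proposal is correct and follows essentially the same approach as the paper: the paper also derives the retraction identity $\underline{\pb}_*\circ T_*(\mu)=\mu$ from (\ref{eq:suff1}) and then applies $T_*$ to obtain $T_*\circ\underline{\pb}_*(T_*\mu)=T_*\mu$, concluding equivalence. Your write-up is a bit more explicit in checking that $\underline{\pb}$ is a morphism of statistical models and in invoking functoriality of $M_*$, but the mathematical core is identical.
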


\begin{remark}\label{rem:reg1} (1) The converse of Proposition \ref{prop:suffs} is  valid, if   $P_\Xx$   is a dominated   family \cite[Theorem 2.1]{Sacksteder1967},
	and   it    is   false  without  the  dominated family condition   \cite[\S 6]{Sacksteder1967}.

	(2) There  are many   characterizations of sufficiency  of a statistic,  see e.g. \cite[\S 2.9, p. 35]{Pfanzagl2017}. In   \cite[Proposition 5.6, p. 266]{AJLS2017}  Ay-Jost-L\^e-Schwachh\"ofer  give   a    characterization  of a sufficient  statistic    w.r.t.   to a {\it   regular}    set $P_\Xx$
	of  dominated measures  that is  parameterized  by a smooth   manifold  via       the notion of  ``information loss". 
\end{remark}

We  complete  this section with   the following theorem on   the category of
sufficient  morphisms.

\begin{theorem}\label{thm:sprob}   (1) A  composition of        sufficient    morphisms between     statistical models   is a   sufficient   morphism. 
	
	(2)  Assume that $T: (\Xx, P_\Xx) \leadsto (\Yy, P_\Yy)$ is  a sufficient  morphism   and 
	$T_*(\Pp_\Xx) = P_\Yy$. Let $\pb: \Yy \to  \Pp (\Xx)$ be a conditional mapping   for  $T$. Then  $\underline{\pb}: \Yy \leadsto \Xx$  is a  sufficient   probabilistic  morphism   for  $P_\Yy$ and $\overline{T}: \Xx \to \Pp(\Yy)$ is a conditional  mapping   for  $\underline{\pb}$.
\end{theorem}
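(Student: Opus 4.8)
The plan is to prove each part by exhibiting the conditional mapping explicitly and verifying the defining relation \eqref{eq:suff}; by Remark \ref{rem:suff0}(2) it always suffices to check \eqref{eq:suff} on indicators. First I record two tools. From \eqref{eq:markov1} and \eqref{eq:adjoint1} one has, for $f=1_B$ and hence by linearity and dominated convergence for every $f\in L(\Yy)$, the adjunction $I_f(S_*(T)\mu)=I_{T^*f}(\mu)$, valid for all $\mu\in\Ss(\Xx)$. Combining this with the covariant functoriality $S_*(T_2\circ T_1)=S_*(T_2)\circ S_*(T_1)$ of Theorem \ref{thm:cat}(1) and Proposition \ref{prop:funct2}, and testing against the Dirac masses $\delta_x$ (for which $S_*(T)\delta_x=\overline T(x)$), yields the contravariant functoriality $(T_2\circ T_1)^*=T_1^*\circ T_2^*$, which I use freely below.

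For part (1), let $T_1:(\Xx,P_\Xx)\leadsto(\Yy,P_\Yy)$ and $T_2:(\Yy,P_\Yy)\leadsto(\Zz,P_\Zz)$ be sufficient morphisms with conditional mappings $\pb_1:\Yy\to\Pp(\Xx)$ and $\pb_2:\Zz\to\Pp(\Yy)$, so that the reverse morphisms are $\underline{\pb}_1:\Yy\leadsto\Xx$ and $\underline{\pb}_2:\Zz\leadsto\Yy$. I claim $T_2\circ T_1$ is sufficient with reverse morphism $\underline{\pb}_1\circ\underline{\pb}_2:\Zz\leadsto\Xx$. Fix $A\in\Sigma_\Xx$ and $\mu\in P_\Xx$. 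Using functoriality of $(\cdot)_*$, the sufficiency of $T_1$ applied to $(\mu,1_A)$, the inclusion $(T_1)_*\mu\in P_\Yy$ from Definition \ref{def:cats}, the sufficiency of $T_2$ applied to $\bigl((T_1)_*\mu,\underline{\pb}_1^*(1_A)\bigr)$ with $\underline{\pb}_1^*(1_A)\in L(\Yy)$ by Proposition \ref{prop:markov}(4), and finally contravariant functoriality, one chains
\begin{align*}
(T_2\circ T_1)_*(1_A\mu)
&=(T_2)_*\bigl((T_1)_*(1_A\mu)\bigr)
=(T_2)_*\bigl(\underline{\pb}_1^*(1_A)\,(T_1)_*\mu\bigr)\\
&=\underline{\pb}_2^*\bigl(\underline{\pb}_1^*(1_A)\bigr)\,(T_2\circ T_1)_*\mu
=(\underline{\pb}_1\circ\underline{\pb}_2)^*(1_A)\,(T_2\circ T_1)_*\mu,
\end{align*}
which is exactly \eqref{eq:suff} for $T_2\circ T_1$.

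For part (2), fix $\nu\in P_\Yy$. Since $T_*(P_\Xx)=P_\Yy$ there is $\mu\in P_\Xx$ with $\nu=T_*(\mu)$, and the computation \eqref{eq:suff1} gives $\underline{\pb}_*(\nu)=\underline{\pb}_*(T_*\mu)=\mu$. I must check that $\underline{\pb}$ satisfies \eqref{eq:suff} with reverse morphism $T$ (equivalently, with conditional mapping $\overline T:\Xx\to\Pp(\Yy)$); by Remark \ref{rem:suff0}(2) this reduces to showing, for every $B\in\Sigma_\Yy$,
\begin{equation*}
\underline{\pb}_*(1_B\,\nu)=T^*(1_B)\cdot\underline{\pb}_*(\nu)=T^*(1_B)\cdot\mu .
\end{equation*}
Evaluating both measures on an arbitrary $A\in\Sigma_\Xx$, the left side equals $\int_B\pb_y(A)\,d\nu(y)$ by \eqref{eq:markov1}, while the right side equals $\int_A\overline T(x)(B)\,d\mu(x)$ by \eqref{eq:adjoint1}.

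These two quantities are equal, and this is the crux of the argument: it is precisely the relation \eqref{eq:suff} expressing the sufficiency of $T$, read with $h=1_A$ and evaluated on $B$ (its left side $T_*(1_A\mu)(B)=\int_A\overline T(x)(B)\,d\mu$, its right side $\int_B\underline{\pb}^*(1_A)\,dT_*\mu=\int_B\pb_y(A)\,d\nu$). Hence the displayed identity holds, proving simultaneously that $\underline{\pb}$ is sufficient for $P_\Yy$ and that $\overline T$ is a conditional mapping for it. The essential point — the only input beyond bookkeeping — is this symmetry: the defining equation of sufficiency for $T$ is itself the disintegration identity $\int_A\overline T(x)(B)\,d\mu=\int_B\pb_y(A)\,d\nu$ of the joint law of $(x,y)$ on $\Xx\times\Yy$, which is manifestly symmetric in the two coordinates once the marginals are matched via $T_*(P_\Xx)=P_\Yy$ and the inversion $\underline{\pb}_*\circ T_*=\mathrm{id}$ on $P_\Xx$. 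I therefore expect no genuine obstacle beyond correctly invoking this inversion from \eqref{eq:suff1} together with the surjectivity hypothesis $T_*(P_\Xx)=P_\Yy$.
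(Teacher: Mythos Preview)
Your proof is correct and follows essentially the same approach as the paper. For part~(1) you chain the two sufficiency relations through functoriality of $(\cdot)_*$ exactly as the paper does, and for part~(2) both arguments reduce to the single identity $\int_B\pb_y(A)\,dT_*\mu=\int_A\overline T(x)(B)\,d\mu$, which you verify by reading the sufficiency equation \eqref{eq:suff} for $T$ with $h=1_A$ and evaluating at $B$; the only cosmetic difference is that the paper starts from $\mu\in P_\Xx$ while you start from $\nu\in P_\Yy$ and pull back via the surjectivity $T_*(P_\Xx)=P_\Yy$ together with \eqref{eq:suff1}.
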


\begin{proof} 1.   Assume  that $T_1 : (\Xx, P_\Xx)\leadsto (\Yy, P_\Yy)$    and $T_2: (\Yy, P_\Yy) \leadsto (\Zz, P_\Zz)$ are sufficient  morphisms   with     conditional mappings $\pb_1 : \Yy \to \Pp( \Xx)$    and  $\pb_2 : \Zz \to \Pp (\Yy)$ for $T_1$ and $T_2$,  respectively. Then we  have  for  any $h  \in L(\Xx)$ and  $\mu\in P_\Xx$
	$$ (T_2 \circ T_1 )_* (h\mu)=( T_2) _* \circ  (T_1) _* (h \mu)$$
	$$=  (T_2)_*(\underline{ \pb}_1 ^* (h) (T_1)_* (\mu)) =  \underline{\pb_2 }^*\circ \underline {\pb}_1^* (h) ( T_2 \circ T_1)_*(\mu)$$
	which   proves  the   assertion   (1)  of   Theorem  \ref{thm:sprob}.

	2. Assume   that  $T: (\Xx, P_\Xx) \to  (\Yy, P_\Yy)$ is a sufficient  morphism and
	$\Pp_\Yy =  T_*(P_\Xx)$. Let  $\pb: \Yy \to \Pp (\Xx)$  be a  conditional mapping for $T$, i.e., we  have  for any $h \in L(\Xx)$   and $\mu \in  P_\Xx$
	\begin{equation}\label{eq:suff2}
	\underline{\pb}^* (h) = \frac{dT_* (h \mu)}{dT_* (\mu)}.
	\end{equation}
	To   prove  the  assertion  (2)   of Theorem \ref{thm:sprob}  it suffices  to establish  the following   equality
	\begin{equation}\label{eq:repro}
	\underline{\pb}_* (1_B  T_* \mu) (A)=  T^* (1_B) \mu(A)
	\end{equation}
	for  any  $\mu \in P_\Xx$,  $B \in \Sigma_\Yy$, $A\in \Sigma_\Xx$ and  $h \in L(\Xx)$.
	
	Straightforward computations  yield
	\begin{eqnarray} 
	\underline{\pb}_* (1_B  T_* \mu) (A)\stackrel{(\ref{eq:markov1})}{=}  \int_\Yy \pb_y (A) d (1_B  T_*\mu)\nonumber\\
	= \int_B  \pb _y (A)  dT_*\mu \stackrel{(\ref{eq:suff2})}{=} \int_B  \frac{ dT_* (1_A\mu)}{dT_*\mu} dT_*\mu\nonumber \\
	=  T_* (1_A \mu) (B)\stackrel{(\ref{eq:markov1})}{= }\int_\Xx  \overline  T_x (B)  d  1_A\mu\nonumber  \\  
	=   \int  _A  \overline{T}_x  (B) d\mu. \label{eq:sprob1}
	\end{eqnarray}
	On the other  hand we have
	\begin{equation}\label{eq:sprob2}
	T^*(1_B)\mu(A) = \int_A T^* (1_B)    (x)  d\mu  \stackrel{ (\ref{eq:adjoint1})}{=} \int_A\overline T_x (B)d\mu.   
	\end{equation}
	Comparing  (\ref{eq:sprob1})  with  (\ref{eq:sprob2}), we obtain  (\ref{eq:repro}).
	This  completes  the proof of Theorem \ref{thm:sprob}.
\end{proof}

\begin{remark}\label{rem:suffs} (1) Theorem \ref{thm:sprob}  implies   that in the  subcategory  of     statistical models  equivalent to a given statistical  model, whose morphisms  are 
	sufficient morphisms,  every       object  is a  terminal object, so we don't have   the notion  of a ``minimal sufficient     probabilistic  morphism"     like the notion of   a minimal sufficient    statistic.   Note that  a minimal sufficient  statistic  does not always  exist  \cite[\S 2.6, p. 24]{Pfanzagl2017}.
	
	(2) We would  like to point out that  Fritz \cite{Fritz2020}     treats  sufficient  statistics  in  abstract  Markov category and  reproves  several classical  results  concerning sufficient  statistics   using  a categorical  approach.
	
	(3)  We refer  the reader  to \cite{Le2020} for further applications of probabilistic  morphisms  to
the	category of statistical models  and their  natural geometry.
	
\end{remark}

\section{Probabilistic morphisms and Bayes' formula}\label{sec:bayes}
In this section we formalize  the notion  of  a parameter space of a  probabilistic model in Bayesian statistics  via the notion  of a Bayesian statistical model (Definition \ref{def:bstatmodel}, Example \ref{ex:bstat}, Remark \ref{rem:bayes}).   We   define  the  notion  of  posterior   distributions
(Definition \ref{def:post})   and   express   the existence   of   posterior   distributions  in terms  of   equivalence  of statistical  models   (Proposition \ref{prop:post}). 
Then we prove our Main Theorem~\ref{thm:bayesm} which provides a new formula  for the posterior  distributions. We  derive  a  consequence (Corollary \ref{cor:lopital}) and  prove the consistence  of our  formula  (Proposition \ref{prop:cons}).

\subsection{Bayesian statistical models  and probabilistic  morphisms}\label{subs:bsm}
\begin{definition}
	\label{def:bstatmodel}  A  Bayesian statistical model   is  a quadruple  $(\Theta,  \mu_\Theta, \pb , \Xx)$, where 
	$\pb: \Theta \to \Pp(\Xx)$ is a measurable mapping and $\mu_\Theta \in \Pp(\Theta)$,  called  a {\it   prior   measure}. 
\end{definition}

Rephrasing, a  Bayesian statistical model  is  defined  by
a  probabilistic  morphism $\underline{\pb}$  between  statistical models $(\Theta, \{ \mu_\Theta \})$ and $(\Xx,\Pp(\Xx))$.

Recall that, in our paper,    if  $\Xx$ is  a  topological space  then   $\Sigma_\Xx$  is assumed to be 
the Borel  $\sigma$-algebra $\Bb(\Xx)$  and  hence $\Pp(\Xx)$ consists of    Borel measures,  if not otherwise specified.

\begin{example}\label{ex:bstat}
	
	(1)  Let   $\Ss(\Xx)_{TV}$ denote  the Banach space  $\Ss(\Xx)$ endowed with  the       total variation norm  and $d_H$ denote the Hellinger distance  on $\Pp(\Xx)$, so the  topology   on $\Pp(\Xx)$  induced  by $d_H$   coincides with the topology  on $\Pp(\Xx)$  induced      from  the natural inclusion  of $i:\Pp(\Xx) \to \Ss(\Xx)_{TV}$. Assume  that    $(M, \mu_M)$ is a  smooth finite dimensional
	manifold   endowed  with a volume element $\mu_M$  and $\pb: M \to (\Pp (\Xx), d_H)$ is a  continuous map, for instance,  if $\pb$ is a smooth map, i.e.,
	the composition $i\circ \pb: M \to \Ss(\Xx)_{TV}$ is a  smooth map, see  e.g., \cite[p. 168]{AJLS2017}, \cite[p. 380]{Bogachev2010}. We shall show that $(M, \mu_M, \pb, \Xx)$ is a Bayesian   statistical model, i.e.,   we shall show  that  the  map
	$\pb: (M, \Bb(M)) \to  (\Pp(\Xx), \Sigma_w)$  is measurable. Denote  by $\tau_\rho$ 	the  smallest topology  on $\Ss(\Xx)$ 
	such that     for all  $f \in \Ff_s (\Xx)$  the  map $I_f: \Ss(\Xx) \to \R$    is continuous.  Clearly  the  topology  $\tau_\rho$ is  weaker   than  the  strong topology
	generated by  the    total variation norm.  Let $\tau_\rho$  also denote  the  induced
	topology  on $\Pp(\Xx)$. It follows  that  the   map $\pb: M \to \Pp(\Xx)$  is  $\tau_\rho$-continuous. Hence the  map $\pb : (M, \Bb(M)) \to  (\Pp (\Xx),\Bb (\tau_\rho))$ is measurable. It is not hard to    check that $\Sigma_w \subset  \Bb (\tau_\rho)$,   and  moreover   $\Bb(\tau_w) \subset \Bb(\tau_\rho)$  with the  equality  $\Bb(\tau_w) = \Bb(\tau_\rho)$  if and only if $\Xx$  is countable \cite{GH1989}. We conclude  that $\pb: (M, \Bb(M)) \to  (\Pp (\Xx), \Sigma_w)$ is a measurable  mapping.

	(2) For any $\mu_\Pp \in \Pp^2(\Xx)$  the quadruple $(\Pp(\Xx), \mu_\Pp, Id_\Pp, \Xx)$ is a Bayesian 
	statistical model. 
	
	(3)   Let $(\Theta, \mu_\Theta, \pb, \Om_\infty)$ be a  Bayesian  statistical model, where  $\Om_\infty : = \{ \om_1, \cdots, \om_\infty\}$.  Then   the probabilistic morphism  $\underline{\pb} : \Theta \leadsto \Om_\infty$   will be called  a {\it   random  partition}   of  $\Theta$.  If $\underline{\pb}$ is a measurable mapping then
	it is  also called {\it  a measurable   partition}.
	A random   partition $\underline\pb: \Theta \leadsto \Om_\infty$ will be called {\it hierarchical} if  
	$\underline{\pb} = \underline{\pb}_k \circ \cdots \circ \underline{\pb}_1$,  where  $\underline{\pb}_1: \Theta \leadsto  \Om_\infty$,  and   $\underline{\pb}_i : \Om_\infty \leadsto \Om_\infty$ for   $i \in [2, k]$ are  probabilistic  morphisms. 
\end{example}

\begin{remark}
	\label{rem:bayes}  (1)   We recover the classical (frequentist)  definition of a  (parameterized)  statistical model  from Definition \ref{def:bstatmodel}  by  applying the ``forgetful  functor"  from the category of measurable  spaces  to the category of sets.
	
	(2) Definition \ref{def:bstatmodel} encompasses both  parametric  Bayesian  models, which  usually   consist of conditional  density functions $f(x|\theta)$, see
	e.g. \cite[p. 4]{Berger1993},  and nonparametric  Bayesian models. It is essentially equivalent to  the concept  of  a Bayesian  parameter space  defined in \cite[p. 16]{GR2003} but we use the more  compact  functorial language  of probabilistic morphisms.
\end{remark}

\subsection{Posterior  distributions and probabilistic morphisms}\label{subs:postprop}

Let   $(\Theta, \mu_\Theta, \pb, \Xx)$ be  a Bayesian   statistical model.  Then 
we  define  the joint distribution  $\mu$  on the  measurable space  $(\Theta \times \Xx , \Sigma_\Theta \otimes  \Sigma_\Xx)$ as follows 
\begin{equation}\label{eq:dis2}
\mu(B \times A) := \int_{B} \pb_\theta(A)  d\mu_ \Theta  \text{ for  all } B \in \Sigma_\Theta, \: A \in \Sigma_\Xx,
\end{equation}
where   we re-denote  $\pb(\theta)$ by $\pb_\theta$  for interpreting $\theta$ as  parameter  of  the distribution.  In  other words, denoting by
$\Gamma_{\pb}: =  (Id_{\Theta}, \pb):\Theta \leadsto (\Theta \times  \Xx)$  the graph of the probabilistic morphism $\pb$, then 	
$\mu =(\Gamma_{\pb})_*\mu_\Theta.$
As in  (\ref{eq:xiy}),  $\Pi_\Theta$  and   $\Pi_\Xx$  denote  the  measurable  projections  $(\Theta \times \Xx, \Sigma_\Theta \otimes \Sigma_\Xx)\to (\Theta, \Sigma_\Theta)$ and  $(\Theta \times \Xx, \Sigma_\Theta \otimes \Sigma_\Xx)\to (\Xx, \Sigma_\Xx)$  respectively. Using  $\Pi_\Xx$ we express the marginal  probability  measure $\mu_\Xx$  on $\Xx$   as follows:  $\mu_\Xx = (\Pi_\Xx)_* (\mu)$.   In other  words  we have $\mu_\Xx = \pb_* (\mu_\Theta)$.  
Clearly $\mu_\Theta  = (\Pi_\Theta)_* (\mu)$. We note that
$\mu_{\Xx|\Theta}(A|\theta): = \pb_\theta(A)$ is  the conditional
probability    of the  measure $\mu$ on  $(\Theta \times \Xx, \Sigma_\Theta \otimes \Sigma_\Xx)$  w.r.t.  the projection $\Pi_\Theta$,   since we have  the following equality
\begin{equation}\label{eq:bayes1}
\pb_\theta (A)= \frac{d(\Pi_\Theta)_* (1_{\Theta \times A}\mu)}{d(\Pi_\Theta)_*\mu}(\theta),
\end{equation}
cf. (\ref{eq:xiy}).

Formula (\ref{eq:bayes1})  motivates  the following  definition.

\begin{definition}
	\label{def:post}  A family of  probabilistic   measures  $\mu _{\Theta| \Xx}(\cdot|x)  \in\Pp (\Theta)$,  $x \in \Xx$,     is   called  a family  of  {\it   posterior   distributions  of $\mu_\Theta$ after seeing the data $x$}  if  for all $B \in \Sigma_\Theta$ we have
	\begin{equation}\label{eq:bayes2}
	\mu_{\Theta|\Xx} (B| x)   = \frac{d(\Pi_\Xx)_* (1_{B\times\Xx}\mu)}{d(\Pi_\Xx)_*\mu}(x),
	\end{equation}
	where  (\ref{eq:bayes2}) should  be understood  as  an    equivalence 
	class  of functions  in $L^1(\Xx,\mu_\Xx )$. 
\end{definition}

\begin{remark}\label{rem:post} (1)  Definition    \ref{def:post}  coincides   with    the  definition  of a  posterior   distribution in  classical   Bayesian statistics see e.g. \cite[\S 4.2.1, p. 126]{Berger1993},
	\cite[p. 16]{Schervish1997}.   Note that  in
	the both definitions  of  the mentioned  books     the authors   did not   explicitly    require  that
	$\mu_{\Theta|\Xx} (\cdot |x)$  must be a  $\sigma$-additive  function  on $\Sigma_\Theta$.  The last requirement  is   trivially satisfied when  one considers  only  Bayesian statistical models
	of dominated measures, i.e.  there exists  a measure $\nu_0 \in \Pp(\Xx)$ such  that  for all $\theta \in \Theta$  we have  $\pb(\theta)\ll \nu_0$.
	In  other papers  and books, e.g.,   in \cite{Ferguson1973}  and \cite{GV2017},
	statisticians  also   think of   posterior  distributions as regular    conditional distributions, which  is  equivalent  to   Definition  \ref{def:post}.  
	The   existence  of     posterior  distributions   is therefore equivalent to the existence  of regular  conditional     distributions.

	(2) Alternatively,  we can also consider  the  existence  of  posterior  distributions  as  the  existence  of disintegration  \cite{CP1997}, \cite[p. 380, vol.2]{Bogachev2007}, \cite{CJ2019}, \cite{Parzygnat2020}.

\end{remark}	
The following    Proposition generalizes \cite[Theorem 2.1]{Parzygnat2020}, which considers  discrete   spaces.

\begin{proposition}\label{prop:post}  Given a  Bayesian  statistical model  $(\Theta, \mu_\Theta, \pb, \Xx)$.   A    family of posterior  distributions
	$\mu_{\Theta|\Xx} (\cdot |x)$ exists  if and  only  if  the
	statistical  models  $(\Theta, \{\mu_\Theta\})$  and  $(\Xx, \{\mu_\Xx\})$
	are equivalent, where  $\mu_\Xx$ is the  marginal     distribution
	of $\Xx$, i.e.,  $\mu_\Xx = \pb_* (\mu_\Theta) \in \Pp (\Xx)$.
\end{proposition}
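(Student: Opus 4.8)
The plan is to reduce the asserted equivalence-of-models statement to the sufficiency of the morphism $\underline{\pb}$ for the one-point family $\{\mu_\Theta\}$, and then to invoke Proposition \ref{prop:suffs} together with its converse. First I would unwind the two defining formulas against each other. For the joint measure $\mu = (\Gamma_{\pb})_*\mu_\Theta$ of \eqref{eq:dis2} and any $B \in \Sigma_\Theta$, a direct computation gives $(\Pi_\Xx)_*(1_{B\times\Xx}\mu)(A) = \mu(B\times A) = \int_B \pb_\theta(A)\, d\mu_\Theta = \underline{\pb}_*(1_B\mu_\Theta)(A)$ for all $A \in \Sigma_\Xx$, while $(\Pi_\Xx)_*\mu = \mu_\Xx$. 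Hence the posterior identity \eqref{eq:bayes2} reads $\mu_{\Theta|\Xx}(B|x) = \frac{d\underline{\pb}_*(1_B\mu_\Theta)}{d\mu_\Xx}(x)$, which is exactly the Radon--Nikodym derivative appearing in the sufficiency condition \eqref{eq:suff} of Definition \ref{def:suff} evaluated at $h = 1_B$, once we read $\mu_{\Theta|\Xx}(\cdot|x)$ as the map $q\colon\Xx\to\Pp(\Theta)$, $x\mapsto \mu_{\Theta|\Xx}(\cdot|x)$ and use $\underline q^*(1_B)(x) = q(x)(B)$ from \eqref{eq:adjoint1}. By Remark \ref{rem:sigmaw} the measurability of $x\mapsto q(x)(B)$ for every $B$ is equivalent to measurability of $q$ into $(\Pp(\Theta),\Sigma_w)$, so $q$ is a family of posterior distributions if and only if it is a conditional mapping for $\underline{\pb}$; equivalently, if and only if $\underline{\pb}$ is sufficient for $\{\mu_\Theta\}$. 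This equivalence is the technical heart of the argument.

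For the forward implication, suppose a posterior $q$ exists. By the bridge above $\underline{\pb}$ is sufficient for $\{\mu_\Theta\}$, and since $\underline{\pb}_*\mu_\Theta = \mu_\Xx$ by definition of the marginal, we have $\underline{\pb}_*(\{\mu_\Theta\}) = \{\mu_\Xx\}$. Proposition \ref{prop:suffs} then yields at once the equivalence of $(\Theta,\{\mu_\Theta\})$ and $(\Xx,\{\mu_\Xx\})$, with $\underline q$ as the inverse equivalence. As a consistency check I would verify directly that $\underline q$ is a morphism of the stated models, i.e. $\underline q_*\mu_\Xx = \mu_\Theta$: integrating the Radon--Nikodym derivative gives $\underline q_*\mu_\Xx(B) = \int_\Xx q(x)(B)\,d\mu_\Xx = \underline{\pb}_*(1_B\mu_\Theta)(\Xx) = \mu(B\times\Xx) = \mu_\Theta(B)$.

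For the converse I would exploit that the source family $\{\mu_\Theta\}$ is trivially dominated, by $\mu_\Theta$ itself. Under this domination the converse of Proposition \ref{prop:suffs}, due to Sacksteder and recalled in Remark \ref{rem:reg1}(1), applies: an equivalence between $(\Theta,\{\mu_\Theta\})$ and $(\Xx,\{\mu_\Xx\})$ is realized by a sufficient morphism, which here is the canonical $\underline{\pb}$ of the Bayesian model. Sufficiency of $\underline{\pb}$ then provides, via Theorem \ref{thm:sprob}(2), a conditional mapping $\Xx\to\Pp(\Theta)$, and by the bridge of the first paragraph this conditional mapping is precisely a family of posterior distributions. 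The main obstacle lies exactly at this converse step: the abstract notion of equivalence only constrains the pushforward action on the single measures $\mu_\Theta,\mu_\Xx$, so the real work is to pin down that the \emph{given} morphism $\underline{\pb}$, not merely some morphism realizing the equivalence, is the sufficient one, and then to upgrade the resulting $\mu_\Xx$-a.e.\ defined Radon--Nikodym derivatives into a genuine measurable selection $x\mapsto \mu_{\Theta|\Xx}(\cdot|x)$ into $(\Pp(\Theta),\Sigma_w)$ — that is, a regular version — which is what Definition \ref{def:post} demands and where the domination of $\{\mu_\Theta\}$ is essential.
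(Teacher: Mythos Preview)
Your forward direction is correct and matches the paper's in content: both show that a posterior $q$ satisfies $q_*\mu_\Xx = \mu_\Theta$ (the paper via the direct computation~\eqref{eq:inv}, you via the sufficiency bridge and Proposition~\ref{prop:suffs}), which together with $\underline{\pb}_*\mu_\Theta = \mu_\Xx$ yields the equivalence.

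The converse has a genuine gap that you flag but do not close. Sacksteder's result in Remark~\ref{rem:reg1}(1) only furnishes \emph{some} sufficient morphism $(\Theta,\{\mu_\Theta\}) \leadsto (\Xx,\{\mu_\Xx\})$, not the given $\underline{\pb}$, whereas your own bridge shows that what is needed is precisely sufficiency of $\underline{\pb}$ itself --- the posterior condition~\eqref{eq:bayes2} is tied to the specific joint $\mu = (\Gamma_{\pb})_*\mu_\Theta$. The assertion ``which here is the canonical $\underline{\pb}$'' is the entire content of the converse and is never justified. Worse, the abstract route cannot succeed as written: for \emph{singleton} families, equivalence in the sense of Definition~\ref{def:cats} is automatic --- the constant morphisms $\theta \mapsto \mu_\Xx$ and $x \mapsto \mu_\Theta$ always witness it, and each is trivially sufficient for the singleton with the other as conditional mapping. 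Thus Sacksteder's converse, applied to the bare hypothesis that $(\Theta,\{\mu_\Theta\})$ and $(\Xx,\{\mu_\Xx\})$ are equivalent, yields no information whatsoever about $\underline{\pb}$. The paper does not go through Sacksteder at all: it takes an arbitrary $Q$ with $Q_*\mu_\Xx = \mu_\Theta$ and argues directly, via the chain~\eqref{eq:compo2}, that $(\Gamma_Q)_*\mu_\Xx$ coincides with the joint $\mu$ determined by $\underline{\pb}$, so that $Q$ itself serves as the posterior. Any repair of your converse must likewise establish a concrete link between the inverse morphism and the specific $\underline{\pb}$, rather than rely on abstract equivalence of one-point families.
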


\begin{proof}   First  we assume that  a family of posterior   distributions
	$\mu_{\Theta|\Xx} (\cdot |x)$ exists. As before   we let $\mu: = (Id_\Theta, \pb)_*\mu _\Theta  \in \Pp(\Theta \times \Xx)$. Then we    define   a probabilistic  morphism  $Q: \Xx \leadsto \Theta$  whose   generating  measurable  mapping $\overline Q: \Xx \to \Pp (\Theta)$ is  defined  by
	\begin{equation}\label{eq:q}
	\overline Q(x) : = \mu_{\Theta|\Xx} (\cdot |x).
	\end{equation}
	Let us compute  for $A \in \Sigma_\Xx$ and  $B \in \Sigma_\Theta$
	\begin{eqnarray}\label{eq:inv}
	\bigl((\Gamma_Q)_*\mu_\Xx\bigr) (B \times A)=   \int_A  Q(x) (B) d\mu_\Xx= \int_A \frac{ d (\Pi_\Xx)_*  (1_{\Bb \times \Xx}\mu) }{d(\Pi_\Xx)_* \mu} d\mu_\Xx\nonumber\\
	= \int_{\Theta\times  A}  \frac{ d (1_{B \times  \Xx}\mu)}{ d\mu} d\mu = \mu(B \times  A).
	\end{eqnarray}
	It  follows  that  $(Q)_* \mu_\Xx = (\Pi_\Theta)_* (\mu)= \mu_\Theta$.
	Hence    the statistical models  $(\Theta, \mu_\Theta)$ and $(\Xx, \mu_\Xx)$ are equivalent.
	
	Now assume  that   the statistical models  $(\Theta, \mu_\Theta)$ and $(\Xx, \mu_\Xx)$ are equivalent. Then  there  exists  a  probabilistic  morphism
	$Q: \Xx \leadsto \Theta$ such  that  $Q_*  (\mu_\Xx) = \mu_\Theta$. It follows that
	$\pb \circ  Q_*  (\mu_\Xx) = \mu_\Xx$.
	Then we have
	\begin{equation}\label{eq:compo2}
	\mu= (Id_\Theta,  \pb)_* \mu_\Theta = (Id _{\Theta}, \pb_* )(Q_* \mu_\Xx)= (Q, \pb \circ Q)_* \mu_\Xx = (Q,    Id_{\Xx})_*\mu_\Xx.
	\end{equation}
	It follows  from (\ref{eq:compo2}) and  (\ref{eq:bayes1})  that  $ \mu_{\Theta |\Xx} (B|x) : =\overline  Q (x)(B), \, x\in \Xx,$ is the  required    family of   posterior   distributions. This  completes
	the proof  of  Proposition \ref{prop:post}. 
\end{proof}

\begin{remark}\label{rem:epost} One knows  several  theorems  on the existence of regular    conditional   distributions, see  e.g. \cite{CP1997}, \cite[Chapter 10, vol. 3]{Bogachev2007}  for surveys.   To our knowledge  an explicit  formula     for posterior  distributions
	is given    only  in  the  classical  Bayes  formula  and in a few  other  exceptional    cases.  The difficulty  in deriving  an explicit  formula
	for posterior  distributions  is caused  by three facts. Firstly, the   conditional   probability  is expressed  in terms  of the Radon-Nikodym derivative,  whose  existence
	is known  but an  explicit  formula    for    the Radon-Nikodym    derivative   is given  only in few   exceptional cases, see \cite{JLT2020}.  Secondly,    if we   have    an explicit formula  for   $\mu_{\Theta|\Xx} (B|x)$  satisfying  (\ref{eq:bayes2})
	we need  to show   that   for  $\mu_\Xx$-almost  all $x\in \Xx$,  $\mu_{\Theta|\Xx} (B|x)$ defines a $\sigma$-additive   function  on  $\Sigma_\Theta$.  Finally  to make    a   posterior  distribution formula  suitable for Baeysian   inference  we need  to     show that  the  singular   null set  that  appears  in  the  formula  remains     a
	null set  in the updated  marginal  measure.
\end{remark}

\subsection{Proof of   Theorem \ref{thm:bayesm}}\label{subs:bf}  Our proof of Theorem \ref{thm:bayesm}  shall be  divided in  two steps.  In the first  step
we shall   prove Theorem  \ref{thm:bayesm} for  $k =1$. In the  second step  we   shall          prove Theorem \ref{thm:bayesm} for any $k \in \N^+$.

\underline{Step 1}.  In this  step, assuming $k =1$,   we  shall    establish   the  existence  of
a  null set  $S\subset  \Xx$       that satisfies  the  condition
of Theorem  \ref{thm:bayesm}.  This will be done  by combining Proposition \ref{prop:der1},  Corollary \ref{cor:integ1}  below    with   the     existence of a countable   algebra generating  the Borel $\sigma$-algebra   of  a Souslin space $\Theta$  and a certain  property of  $\Theta$ (Proposition \ref{prop:souslin} below).  
In what follows we use shorthand  notations $\nu_1$  for the  measure $\mu_\Xx = (\Pi_\Xx)_*\mu$ and
$\nu_2$  for $ (\Pi_\Xx)_* (1_{B\times \Xx}\mu)$.  Clearly
$\nu_2 = \pb_*(1_B\mu_\Theta)\in \Mm(\Xx)$. By Theorem \ref{thm:cat}(2)
$\nu_2 \ll \nu_1$.
We recall  that  
\begin{equation}\label{eq:update3}
\nu_1(D_r(x)) = \int_\Theta\pb_\theta (D_r(x))d\mu_\Theta,
\end{equation}
\begin{equation}
\label{eq:update4}
\nu_2(D_r(x))= \int _B \pb _\theta (D_r(x))d\mu_\Theta.
\end{equation}

For  any $x \in \Xx$  we     set
$$ \overline{D}_{\nu_1} \nu_2  (x): = \lim _{r \to 0} \sup \frac{\nu_2 (D_r (x))}{\nu_1(D_r(x))} \text{  and  } \underline{D}_{\nu_1} \nu_2 (x) := \lim_{r\to 0} \inf \frac{\nu_2 (D_r (x))}{\nu_1(D_r(x))}$$
where  we set  $ \overline{D}_{\nu_1} \nu_2  (x)= \underline{D}_{\nu_1} \nu_2 (x) = +\infty$
if $\nu_1(D_r(x))=0$   for some $r>0$.

Furthermore  if $\overline{D}_{\nu_1} \nu_2 (x) = \underline{D}_{\nu_1}\nu_2 (x)$ then we set
$$D_{\nu_1}\nu_2 (x) : = \overline{D}_{\nu_1} \nu_2 (x) = \underline{D}_{\nu_1}\nu_2 (x)$$
which  is called {\it the derivative  of $\nu_2$ with respect  to $\nu_1$ at $x$.}
\begin{proposition}\label{prop:der1}  There  is a  measurable subset  $S_B\subset \Xx$ of zero $\nu_1$-measure   such that   for any $x \in \Xx \setminus S_B$ the 
	function $D_{\nu_1}\nu_2$  is well-defined.  Setting  $\tilde D_{\nu_1}\nu_2(x): =0$  for $x \in S_B$  and  $\tilde D_{\nu_1} {\nu_2} (x)= D_{\nu_1} \nu_2(x)$ for $x\in \Xx \setminus S_B$. Then the function  $\tilde D_{\nu_1}{\nu_2}:\Xx \to \R$ is measurable and serves as the Radon-Nikodym  density of the measure  $\nu_2$ with respect  to 
	$\nu_1$.
\end{proposition}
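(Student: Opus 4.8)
The plan is to read the statement as a Besicovitch-type differentiation theorem for the pair $\nu_2 \ll \nu_1$ on the metric space $\Xx$, whose distance is induced from the Riemannian distance of $M^n$. Since $\nu_2 \ll \nu_1$ by Theorem \ref{thm:cat}(2), and both measures are finite (indeed $\nu_1 = \mu_\Xx$ is a probability and $\nu_2 = \pb_*(1_B\mu_\Theta)$ has mass $\mu_\Theta(B) \le 1$), the Radon--Nikodym theorem already furnishes a density $g := d\nu_2/d\nu_1 \in L^1(\Xx,\nu_1)$ with $g \ge 0$. The entire content of the proposition is therefore that this density is recovered as the symmetric derivative, i.e. that $D_{\nu_1}\nu_2(x) = g(x)$ for $\nu_1$-almost every $x$, and that the resulting pointwise object is measurable. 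Accordingly I would split the argument into three tasks: measurability of the upper and lower derivatives, existence of the limit $\nu_1$-a.e., and identification of the limit with $g$.

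First I would settle measurability. For fixed $r>0$ the map $(x,y)\mapsto 1_{\{d(x,y)<r\}}$ is Borel on $\Xx\times\Xx$ because the Riemannian distance $d$ is continuous, so by Tonelli's theorem $x\mapsto \nu_i(D_r(x)) = \int_\Xx 1_{\{d(x,y)<r\}}\,d\nu_i(y)$ is measurable for $i=1,2$. Since $r\mapsto\nu_i(D_r(x))$ is nondecreasing and left-continuous, the values of the ratio $\nu_2(D_r(x))/\nu_1(D_r(x))$ lie in the closure of its values at rational radii, so $\overline{D}_{\nu_1}\nu_2$ and $\underline{D}_{\nu_1}\nu_2$ may be computed along a fixed sequence $r_n\downarrow 0$ of rationals and are thus measurable $[0,+\infty]$-valued functions (with the convention $+\infty$ on the measurable set where some $\nu_1(D_r(x))$ vanishes). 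Consequently the set $S_B$ on which the two derivatives disagree, or on which the common value is $+\infty$, is measurable; once it is shown to be $\nu_1$-null, $\tilde D_{\nu_1}\nu_2$ coincides $\nu_1$-a.e. with the measurable function $g$ and is itself measurable and real-valued.

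The analytic core is the differentiation step, where I would invoke the Besicovitch covering theorem for Riemannian manifolds (\cite[2.8.9]{Federer1969}, see also \cite{JLT2020}). Applied to the Radon measure $\nu_1$ it yields the Vitali covering property: from any fine cover of a set by balls one may extract an at most countable, disjoint subfamily exhausting the set up to $\nu_1$-measure zero. With this I would run the standard argument. For rationals $0\le a<b$, on $E_{a,b} := \{g<a\}\cap\{\overline{D}_{\nu_1}\nu_2>b\}$ the balls realizing $\nu_2(D_r(x))>b\,\nu_1(D_r(x))$ form a fine cover; choosing by outer regularity an open $U\supset E_{a,b}$ with $\nu_2(U)<\nu_2(E_{a,b})+\epsilon$ and extracting a disjoint subfamily inside $U$ exhausting $E_{a,b}$ gives $b\,\nu_1(E_{a,b}) \le \nu_2(U) < \nu_2(E_{a,b})+\epsilon \le a\,\nu_1(E_{a,b})+\epsilon$, where the last bound uses $g<a$ on $E_{a,b}$; letting $\epsilon\to 0$ forces $\nu_1(E_{a,b})=0$. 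The symmetric estimate applied to $\{g>b\}\cap\{\underline{D}_{\nu_1}\nu_2<a\}$ handles the reverse inequality. Taking the union over all rational pairs shows $D_{\nu_1}\nu_2 = g$ off a $\nu_1$-null set, which, together with the null sets from the measurability step, constitutes $S_B$.

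The point requiring the most care, and which I expect to be the main obstacle, is the transfer of the covering geometry from $M^n$ to the subset $\Xx$ with induced metric: the balls $D_r(x)$ are intersections $B_r^M(x)\cap\Xx$ of genuine metric balls in $M$ with $\Xx$. I would therefore apply the Besicovitch theorem to the ambient balls $B_r^M(x)$ centered at points of $\Xx$ and then intersect the selected disjoint subfamily with $\Xx$, which preserves both disjointness and the covering property while keeping all measures concentrated on $\Xx$. The complementary delicate point is that on a complete manifold the Besicovitch overlap constant is only available locally; this is defused precisely because differentiation sees only radii $r\to 0$, so it suffices to work on each member of a countable family of regions covering $\Xx$ where the constant is uniform, and a countable union of $\nu_1$-null exceptional sets remains null. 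This is exactly the local covering package supplied by \cite[2.8.9]{Federer1969} and \cite{JLT2020}, and everything else reduces to the Radon--Nikodym identification and the measurability bookkeeping above.
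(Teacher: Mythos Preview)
Your proposal is correct and rests on the same geometric input as the paper---the Besicovitch covering property for balls in a complete Riemannian manifold---but the paper's own proof is far terser: it simply pushes $\nu_1,\nu_2$ forward along the inclusion $i:\Xx\hookrightarrow M^n$, notes that $i_*\nu_2\ll i_*\nu_1$, and then invokes Theorem~1.2 of \cite{JLT2020} as a black box. What you have written is, in effect, a sketch of the proof of that cited theorem: you supply the measurability of the upper and lower derivatives via Tonelli and monotonicity in $r$, the Vitali-type extraction from Besicovitch, the standard $E_{a,b}$ comparison identifying the symmetric derivative with the Radon--Nikodym density $g$, and the localization needed because the Besicovitch overlap constant on $M^n$ is only uniform on bounded regions. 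Your device of applying Besicovitch to ambient balls $B_r^M(x)$ centered at points of $\Xx$ and then intersecting with $\Xx$ is exactly equivalent to the paper's pushforward, since $\nu_i(D_r(x))=(i_*\nu_i)(B_r^M(x))$. The payoff of your route is a self-contained argument that makes the analytic content visible; the paper's route is brevity, deferring all of the differentiation theory to the companion paper.
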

\begin{proof}
	We note that  the inclusion  map $\Xx \to (M^n, g)$ is continuous  and hence
	measurable. Hence  the  measures $i_*(\nu_1)$ and $i_*(\nu_2)$ are  Borel  measures  on $(M^n,g)$
	and $i_*(\nu_2) \ll i_*(\nu_1)$, since  $\nu_2 \ll \nu_1$.  Now   Proposition \ref{prop:der1} follows immediately  from Theorem 1.2 in \cite{JLT2020}.
\end{proof}

Proposition  \ref{prop:der1}  implies that  for  each $B \in \Sigma_\Xx$  
there exists  a   measurable  subset $S_B \subset \Xx$ of  zero $\mu_\Xx$-measure   such that the RHS  of (\ref{eq:updatem})  is well  defined  for   all $x \in \Xx \setminus S_B$,  and  moreover   it coincides  with the RHS  of (\ref{eq:bayes2}) as functions in $L^1(\Xx, \mu_\Xx)$.  We express  this fact in the following    equivalent   statement.

\begin{corollary}\label{cor:integ1} Let $B \in \Sigma_\Theta$.   For $x \in  (\Xx \setminus  S_B)$ let  $p (B, x):= \mu_{\Theta|\Xx} (B|x)$  that is defined  in (\ref{eq:updatem}) and  for $x\in  S_B$ we let  $ p (B,x): =0$.  Then  the function  $p(B, x) : \Xx \to \R$  is measurable. Furthermore,   for  any $A \in \Sigma_\Xx$  we have
	\begin{equation}\label{eq:integ1}
	\mu (B \times  A)= \int _{ A} p(B, x)d\mu_\Xx (x).
	\end{equation}
\end{corollary}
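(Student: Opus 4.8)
The plan is to recognize that Corollary \ref{cor:integ1} is essentially a reformulation of Proposition \ref{prop:der1} in terms of the joint measure $\mu$, so that nearly all of the analytic work has already been done. First I would observe that, for $k=1$, the quantity $\mu_{\Theta|\Xx}(B|x)$ appearing in (\ref{eq:updatem}) is precisely the limit as $r\to 0$ of the ratio $\nu_2(D_r(x))/\nu_1(D_r(x))$, by (\ref{eq:update3}) and (\ref{eq:update4}); hence for $x\in\Xx\setminus S_B$ it equals $D_{\nu_1}\nu_2(x)$. Comparing with the extension introduced in Proposition \ref{prop:der1}, which sets $\tilde D_{\nu_1}\nu_2$ equal to $0$ on $S_B$ and to $D_{\nu_1}\nu_2$ off $S_B$, I would conclude that $p(B,\cdot)=\tilde D_{\nu_1}\nu_2$ as functions on $\Xx$. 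The measurability of $p(B,\cdot)$ is then immediate from the measurability of $\tilde D_{\nu_1}\nu_2$ already granted by Proposition \ref{prop:der1}.

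For the integral formula (\ref{eq:integ1}), the key identification is $\nu_2(A)=\mu(B\times A)$ for every $A\in\Sigma_\Xx$, which I would check directly from the definitions: since $\Pi_\Xx^{-1}(A)=\Theta\times A$, one has $(\Pi_\Xx)_*(1_{B\times\Xx}\mu)(A)=\mu\bigl((B\times\Xx)\cap(\Theta\times A)\bigr)=\mu(B\times A)$, which is exactly $\nu_2(A)$. Then, invoking the Radon--Nikodym property supplied by Proposition \ref{prop:der1}, namely that $\tilde D_{\nu_1}\nu_2=\frac{d\nu_2}{d\nu_1}$, I would write
\[
\mu(B\times A)=\nu_2(A)=\int_A \tilde D_{\nu_1}\nu_2\, d\nu_1=\int_A p(B,x)\, d\mu_\Xx(x),
\]
recalling that $\nu_1=\mu_\Xx$ and $p(B,\cdot)=\tilde D_{\nu_1}\nu_2$. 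This is precisely (\ref{eq:integ1}).

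Thus every step is bookkeeping once Proposition \ref{prop:der1} is in hand, and there is no genuine analytic obstacle remaining in the Corollary itself. The only point requiring a little care is the matching of the piecewise definitions and null sets: one must verify that the prescription $p(B,x)=0$ on $S_B$ agrees with the extension $\tilde D_{\nu_1}\nu_2$, and that, since $\nu_2\ll\nu_1$ by Theorem \ref{thm:cat}(2) and $S_B$ is $\nu_1$-null, the values on $S_B$ do not affect the integral. In short, the heart of the argument is simply to confirm that the limit defining $p(B,x)$ in (\ref{eq:updatem}) coincides $\mu_\Xx$-almost everywhere with the Radon--Nikodym density of $\nu_2$ with respect to $\nu_1$, which Proposition \ref{prop:der1} already provides.
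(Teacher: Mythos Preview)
Your proposal is correct and matches the paper's approach: the paper presents Corollary \ref{cor:integ1} explicitly as an ``equivalent statement'' of what Proposition \ref{prop:der1} yields, with no further proof given. Your write-up simply fills in the bookkeeping that the paper leaves implicit, namely the identification $p(B,\cdot)=\tilde D_{\nu_1}\nu_2$ and the verification $\nu_2(A)=\mu(B\times A)$.
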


The strategy  of the remainder  of the  proof of the first assertion   of Theorem \ref{thm:bayesm}  for $k =1$  (Step 1) is  as  follows.  
By  \cite[Corollary  6.7.5, p.  25, vol.2]{Bogachev2007}, there exists  a countable  algebra $\Uu_\Theta$ 
generating  $\Bb(\Theta)$.  In Lemma \ref{lem:additive}  we   shall   show  the existence
of a  subset $S_0 \subset \Xx$  of  zero $\mu_\Xx$-measure  such   that
for  any ${ x} \in (\Xx \setminus S_0)$  and  any $B \in \Uu_\Theta$    the  RHS  of (\ref{eq:updatem})  is well-defined  and  $\mu_{\Theta|\Xx} (\cdot|{ x}) = p(B, {\bf x})$, moreover $\mu_{\Theta|\Xx}(\cdot| { x})$ is an additive   function on  $\Uu_\Theta$.     Then  in  Proposition \ref{prop:sigmad} 
we shall  apply  Proposition  \ref{prop:additive} below  to  show  that  there is 
a subset $S_1 \subset  (\Xx \setminus S_0)$ of zero  $\mu_\Xx$-measure such that  for  all  ${ x} \in \Xx \setminus (S_0 \cup S_1)$ the function
$p(B, { x})=\mu_{\Theta|\Xx}(\cdot| { x}) $ is  $\sigma$-additive  on  $\Uu_\Theta$, and hence   it  defines
a  probability  measure  on $\Theta$, since  $p(\Theta, { x}) = 1$ for  all ${\bf x} \in (\Xx \setminus (S_0 \cup S_1))$.  Since on $(\Xx \setminus  (S_0 \cup S_1))$ the
function $p(B, { x})$  coincides  with  $\mu_{\Theta|\Xx} (B,{ x})$  this shall complete  the proof   the first assertion  of Theorem \ref{thm:bayesm} for $k =1$.

We recall  that  a  family  $\Kk$ of subsets    of a set $X$  is called a {\it  compact  class}, if  for any  sequence $K_n$  of its elements  with $\cup_{ n=1}^ \infty  K_n = \emptyset$,  there  exists  $N$ such that $\cup_{i=1}^N K_n = \emptyset$ \cite[Definition 1.4.1, p. 13, vol.1]{Bogachev2007}.

\begin{proposition}
	\label{prop:additiven}\cite[Theorem 1.4.3, p. 13, vol. 1]{Bogachev2007} Let $\mu$ be  a nonnegative additive set function  on an algebra $\Aa$. Suppose  that there exists a compact class  $\Kk$ approximating $\mu$ in the following sense: for every $A \in \Aa$ and every $\eps > 0$, there  exist $K_\eps  \in \Kk$  and $A_\eps \in \Aa$  such  that $A_\eps \subset  K_\eps \subset  A$ and $\mu (A\setminus A_\eps)< \eps$. Then $\mu$ is countably additive.  In particular  this is true  if the compact class  $\Kk$ is contained  in $\Aa$  and for any $A \in \Aa$  one has the  equality
	$$ \mu(A) = \sup_{ K \subset  A, K \in \Kk} \mu(K).$$
\end{proposition}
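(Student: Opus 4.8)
The plan is to reduce countable additivity of $\mu$ to its continuity at the empty set and then to exploit the compactness of $\Kk$ to force that continuity. Since $\mu$ is a finite nonnegative additive set function on the algebra $\Aa$, a standard fact is that $\mu$ is countably additive if and only if $\mu(A_n)\to 0$ whenever $A_n\in\Aa$ with $A_n\supset A_{n+1}$ and $\bigcap_{n}A_n=\emptyset$. So first I would fix such a decreasing sequence $A_n\downarrow\emptyset$ together with a tolerance $\eps>0$, and reduce the whole statement to proving $\mu(A_n)<\eps$ for all sufficiently large $n$.

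Next I would apply the approximation hypothesis to each $A_n$ separately, choosing $K_n\in\Kk$ and $B_n\in\Aa$ with $B_n\subset K_n\subset A_n$ and $\mu(A_n\setminus B_n)<\eps/2^n$. Because $K_n\subset A_n$, we get $\bigcap_{n}K_n\subset\bigcap_{n}A_n=\emptyset$; here the compactness of $\Kk$ enters, producing an index $N$ with $\bigcap_{n=1}^{N}K_n=\emptyset$ (this is the defining finite-intersection property of a compact class). Since $B_n\subset K_n$, it follows that $\bigcap_{n=1}^{N}B_n=\emptyset$ as well, and crucially all the sets $B_n$ now lie in $\Aa$, so finite additivity of $\mu$ becomes available.

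The key computation is then an inside-out estimate. Using $\bigcap_{n=1}^{N}B_n=\emptyset$ together with $A_N\subset A_n$ for $n\le N$, I would write
\begin{equation*}
A_N=A_N\setminus\bigcap_{n=1}^{N}B_n=\bigcup_{n=1}^{N}(A_N\setminus B_n)\subset\bigcup_{n=1}^{N}(A_n\setminus B_n),
\end{equation*}
so that finite subadditivity yields
\begin{equation*}
\mu(A_N)\le\sum_{n=1}^{N}\mu(A_n\setminus B_n)<\sum_{n=1}^{N}\frac{\eps}{2^n}<\eps.
\end{equation*}
As the $A_n$ are decreasing, $\mu(A_n)\le\mu(A_N)<\eps$ for every $n\ge N$, whence $\mu(A_n)\to 0$, which is precisely the continuity at $\emptyset$ needed to conclude countable additivity.

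For the ``in particular'' clause I would simply verify that the displayed inner-regularity condition implies the approximation hypothesis: given $A\in\Aa$ and $\eps>0$, pick $K\in\Kk$ with $K\subset A$ and $\mu(K)>\mu(A)-\eps$, and take $A_\eps=K_\eps=K$; since $K\in\Kk\subset\Aa$, additivity gives $\mu(A\setminus K)=\mu(A)-\mu(K)<\eps$. The main obstacle is conceptual rather than computational: one must carefully interleave two distinct roles, invoking the sets $K_n\in\Kk$ solely for the compactness/finite-intersection step while performing every measure-theoretic manipulation with the inner approximants $B_n\in\Aa$, where finite additivity is legitimate. Getting the bookkeeping of this interplay right---in particular the decomposition of $A_N$ and the geometric summation of the errors $\eps/2^n$---is where the care lies.
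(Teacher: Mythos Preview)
Your argument is correct and is precisely the standard proof of this result. Note, however, that the paper does not actually supply its own proof of this proposition: it is stated as a citation of \cite[Theorem 1.4.3, p.~13, vol.~1]{Bogachev2007} and used as a black box in the proof of Proposition~\ref{prop:sigmad}. Your write-up reproduces the classical argument found in Bogachev---reduce to continuity at $\emptyset$, approximate each $A_n$ from within by $B_n\subset K_n\subset A_n$ with geometrically decaying error, invoke the defining finite-intersection property of a compact class to obtain $\bigcap_{n=1}^N K_n=\emptyset$, and then estimate $\mu(A_N)$ via the decomposition $A_N\subset\bigcup_{n=1}^N(A_n\setminus B_n)$---so there is nothing to compare against and nothing to correct.
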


{\it Continuation of Step 1}.  By Proposition \ref{prop:souslin}   any  family $\Kk$ of compact sets in $\Theta$ is a  {\it compact class}.
Propositions \ref{prop:souslin}  and \ref{prop:additive}  imply that  there  exists   a countable  algebra $\Uu_\Theta$  generating $\Bb(\Theta)$ such that $\Uu_\Theta$   contains a  countable union $\Kk_\Theta$ of metrizable  compact sets   on which the measure
$\mu_\Theta$ is  concentrated, and  for every $B \in \Bb(\Theta)$  and every  $\eps >0$ there
exists   a metrizable compact  set $K_\eps$  such that $\mu(B \setminus K_\eps)< \eps$. 
Thus 
the condition  of  \cite[Theorem 10.4.5 (ii), p. 359, vol.2]{Bogachev2007}  is  satisfied   for $\Aa =  \Uu_\Theta$.

Since $\Theta$ is a  Souslin space, by  \cite[Corollary  6.7.5, p.  25, vol.2]{Bogachev2007}, there exists  a countable  algebra $\Uu_\Theta$ 
generating  $\Bb(\Theta)$.  Let  $\Uu_\Theta$ consist   of countably many  sets $B_n\in \Sigma_\Theta$.

\begin{lemma}\label{lem:additive}  There exists  a measurable   subset  $S_0 \subset  \Xx $    of  zero  $\mu_\Xx$-measure   such  that   for all $x \in  (\Xx \setminus S_0)$  the   function
	$  \mu_{\Theta|\Xx}(\cdot|x)$  is additive  on $\Uu_\Theta$, moreover
	$\mu _{\Theta|\Xx}(\cdot| x) = p (B, x)$.
\end{lemma}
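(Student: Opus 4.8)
The plan is to reduce the claim to the countability of the algebra $\Uu_\Theta$ together with the pointwise existence of the limit in (\ref{eq:updatem}) already supplied by Proposition \ref{prop:der1}. First I would enumerate $\Uu_\Theta = \{B_n\}_{n}$ and, for each $n$, apply Proposition \ref{prop:der1} with $B = B_n$ (so that $\nu_2 = \pb_*(1_{B_n}\mu_\Theta)$) to obtain a $\mu_\Xx$-null set $S_{B_n} \subset \Xx$ off which the derivative $D_{\nu_1}\nu_2$ is well defined and finite. Setting $S_0 := \bigcup_{n} S_{B_n}$, a countable union of $\mu_\Xx$-null sets is again $\mu_\Xx$-null, so $S_0$ has zero $\mu_\Xx$-measure. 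This is the step that genuinely uses the Souslin hypothesis: it is only because $\Theta$ is Souslin that $\Bb(\Theta)$ admits a \emph{countable} generating algebra, and hence that a single exceptional set $S_0$ can serve for every member of $\Uu_\Theta$ simultaneously.

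For $x \in \Xx \setminus S_0$ and any $B_n \in \Uu_\Theta$ we have $x \notin S_{B_n}$, so by Proposition \ref{prop:der1} the limit in (\ref{eq:updatem}) defining $\mu_{\Theta|\Xx}(B_n|x)$ exists, and by the definition of $p$ in Corollary \ref{cor:integ1} it equals $p(B_n, x)$. This already yields the ``moreover'' assertion $\mu_{\Theta|\Xx}(\cdot|x) = p(\,\cdot\,, x)$ on $\Uu_\Theta$. It then remains to check finite additivity. The key observation is that additivity holds already before passing to the limit: for fixed $r > 0$ with $\nu_1(D_r(x)) > 0$, the map $B \mapsto \int_B \pb_\theta(D_r(x))\, d\mu_\Theta$ is additive over disjoint sets, being the integral of the fixed nonnegative function $\theta \mapsto \pb_\theta(D_r(x))$, while the normalizing denominator $\int_\Theta \pb_\theta(D_r(x))\, d\mu_\Theta = \nu_1(D_r(x))$ does not depend on $B$. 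Hence, given disjoint $B, B' \in \Uu_\Theta$ (whose union again lies in the algebra $\Uu_\Theta$), the pre-limit ratio for $B \cup B'$ is the sum of those for $B$ and $B'$. Since $x \notin S_0$ guarantees that all three limits exist, the limit of the sum is the sum of the limits, giving $\mu_{\Theta|\Xx}(B \cup B'|x) = \mu_{\Theta|\Xx}(B|x) + \mu_{\Theta|\Xx}(B'|x)$.

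I do not expect a serious obstacle in this lemma: the additivity is essentially automatic once all the relevant limits are known to exist, and existence is exactly what Proposition \ref{prop:der1} provides off $S_0$. The only point demanding care is the \emph{uniform} choice of the exceptional null set, which is what forces us to work with the countable algebra $\Uu_\Theta$ rather than with all of $\Bb(\Theta)$; without countability one could not collapse the family $\{S_{B_n}\}$ into a single null set. The genuinely harder property, namely countable additivity of $\mu_{\Theta|\Xx}(\cdot|x)$, is deliberately postponed to Proposition \ref{prop:sigmad}, where the compact-class criterion of Proposition \ref{prop:additiven} will be invoked; the present lemma is only the finite-additivity half of that program.
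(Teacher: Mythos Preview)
Your proposal is correct and follows essentially the same approach as the paper: define $S_0$ as the countable union $\bigcup_n S_{B_n}$ of the exceptional null sets from Proposition~\ref{prop:der1}, and then observe that additivity of the pre-limit ratios passes to the limit once all the limits are known to exist. The paper's proof is terser (it simply says the additivity of the RHS of (\ref{eq:updatem}) is ``clear''), but your explicit justification of that step via additivity of $B\mapsto\int_B \pb_\theta(D_r(x))\,d\mu_\Theta$ is exactly the intended reasoning.
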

\begin{proof}   
	Let  $S_0 = \cup _iS_{B_i}$.   Then $S_0$  is measurable  and  it has   zero measure  set. By   Corollary \ref{cor:integ1},  $\mu_{\Theta|\Xx}(\cdot| x) = p (B, x)$  for
	any  $x\in (\Xx \setminus  S_0)$. 
	Clearly  the   RHS   of (\ref{eq:updatem})  is  additive for on $\Uu_\Theta$   for any $x \in (\Xx \setminus S_0)$. This completes  the proof  of Lemma \ref{lem:additive}
\end{proof}

Now  for all $ x\in \Xx$ we    define   a
set  function $\tilde \mu_{\Theta|\Xx} (\cdot |x)$ on  $\Uu_\Theta$  by setting 

\begin{equation}\label{eq:mtxm}
\tilde  \mu_{\Theta |\Xx} (\cdot |x) :=
\begin{cases}
0  \text{ for  } x\in   S_0,\\
p (\cdot |x)  \text{  for }    x\in \Xx \setminus S_0.
\end{cases}
\end{equation}
By Corollary \ref{cor:integ1}  and  Lemma \ref{lem:additive},  for  any $B \in \Uu_\Theta$ 
the  function $\tilde \mu_{\Theta|\Xx}(B|\cdot ): \Xx  \to \R$ is   measurable.

\begin{proposition}\label{prop:sigmad} There exists  a  measurable   subset $S_1 \subset  \Xx\setminus   S_0$  of zero $\mu_\Xx$-measure   such that  for any  $ x\in  (\Xx \setminus (S_0  \cup S_1))$  the set function $\tilde  \mu_{\Theta|\Xx}(\cdot| x)$ is $\sigma$-additive  on $\Uu_\Theta$.
\end{proposition}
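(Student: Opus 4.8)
The plan is to verify the hypotheses of Proposition \ref{prop:additiven} for the nonnegative additive set function $\tilde\mu_{\Theta|\Xx}(\cdot|x)$ on the algebra $\Uu_\Theta$, simultaneously for all $x$ outside a single null set. As compact class I take the family $\Kk$ of all (metrizable) compact subsets of $\Theta$ that belong to $\Uu_\Theta$; by Proposition \ref{prop:souslin} any family of compact subsets of the Souslin space $\Theta$ is a compact class, so $\Kk$ qualifies and, being contained in $\Uu_\Theta$, puts us in the situation of the ``in particular'' clause of Proposition \ref{prop:additiven}. It then suffices to produce $S_1$ so that, for $x\notin S_0\cup S_1$, the measure $\tilde\mu_{\Theta|\Xx}(\cdot|x)$ is inner regular on $\Uu_\Theta$ with respect to $\Kk$, i.e. $\tilde\mu_{\Theta|\Xx}(B|x)=\sup_{K\in\Kk,\,K\subset B}\tilde\mu_{\Theta|\Xx}(K|x)$ for every $B\in\Uu_\Theta$.

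First I would enumerate $\Uu_\Theta=\{B_n\}_{n\in\N}$ and, using that $\mu_\Theta$ is inner regular with respect to compact sets (a standard property of finite Borel measures on Souslin spaces, already invoked above), choose for each pair $(n,k)$ a compact set $C_{n,k}\in\Kk$ with $C_{n,k}\subset B_n$ and $\mu_\Theta(B_n\setminus C_{n,k})<2^{-n-k}$, enlarging $\Uu_\Theta$ if necessary so that it contains these countably many compacts (this does not affect the earlier steps). Since $B_n\setminus C_{n,k}\in\Uu_\Theta$, Corollary \ref{cor:integ1} applied with $A=\Xx$ gives the key identity
\begin{equation*}
\int_\Xx \tilde\mu_{\Theta|\Xx}(B_n\setminus C_{n,k}|x)\,d\mu_\Xx
= \mu\bigl((B_n\setminus C_{n,k})\times\Xx\bigr)=\mu_\Theta(B_n\setminus C_{n,k})<2^{-n-k}.
\end{equation*}
Summing over $n$ and $k$ and using Tonelli, the nonnegative measurable function $x\mapsto\sum_{n,k}\tilde\mu_{\Theta|\Xx}(B_n\setminus C_{n,k}|x)$ has finite integral, hence is finite for $\mu_\Xx$-a.e. $x$; I let $S_1\subset\Xx\setminus S_0$ be the measurable null set on which it is infinite.

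For $x\notin S_0\cup S_1$ the series converges, so for each fixed $n$ we have $\tilde\mu_{\Theta|\Xx}(B_n\setminus C_{n,k}|x)\to 0$ as $k\to\infty$. By the additivity established in Lemma \ref{lem:additive} this yields $\tilde\mu_{\Theta|\Xx}(C_{n,k}|x)=\tilde\mu_{\Theta|\Xx}(B_n|x)-\tilde\mu_{\Theta|\Xx}(B_n\setminus C_{n,k}|x)\to\tilde\mu_{\Theta|\Xx}(B_n|x)$, which is exactly the required inner-compact approximation on $\Uu_\Theta$. Proposition \ref{prop:additiven} then forces $\tilde\mu_{\Theta|\Xx}(\cdot|x)$ to be $\sigma$-additive on $\Uu_\Theta$, proving the statement. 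The delicate point, and the one I would be most careful about, is making the inner approximation work uniformly: the compacts $C_{n,k}$ and the exceptional null sets must be fixed once and for all from a single countable scheme indexed by $n$ and $k$, so that the passage from the integral bound to a pointwise statement collapses into the one null set $S_1$ valid for every $B_n$ at once; arranging the approximating compacts to lie inside $\Uu_\Theta$ is precisely what lets me invoke the convenient special case of Proposition \ref{prop:additiven} rather than its general form.
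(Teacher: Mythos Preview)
Your proof is correct and follows essentially the same route as the paper: choose countably many compacts $C_{n,k}\subset B_n$ inside $\Uu_\Theta$ approximating $\mu_\Theta$, use Corollary~\ref{cor:integ1} to turn the $\mu_\Theta$-approximation into an integral bound for $\tilde\mu_{\Theta|\Xx}(B_n\setminus C_{n,k}|\cdot)$, extract a single null set $S_1$ on which inner regularity fails, and then invoke Proposition~\ref{prop:additiven}. The only cosmetic difference is that the paper obtains the a.e.\ equality $\tilde\mu_{\Theta|\Xx}(B_n|x)=\sup_k\tilde\mu_{\Theta|\Xx}(C_{n,k}|x)$ via a squeeze argument ($q_n\le\tilde\mu_{\Theta|\Xx}(B_n|\cdot)$ with equal integrals) for each $n$ separately and then unions the null sets, whereas your summability trick with weights $2^{-n-k}$ handles all $n$ simultaneously; both are standard and yield the same conclusion.
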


For the proof   of Proposition  \ref{prop:sigmad} we   shall  use   Propositions    \ref{prop:additive},  \ref{prop:souslin}  below. 

We recall  that  a  family  $\Kk$ of subsets    of a set $X$  is called a {\it  compact  class}, if  for any  sequence $K_n$  of its elements  with $\cup_{ n=1}^ \infty  K_n = \emptyset$,  there  exists  $N$ such that $\cup_{i=1}^N K_n = \emptyset$. In particular, an arbitrary family of compact sets in 
a topological space  is  a compact class \cite[p. 13, vol.1]{Bogachev2007}.

\begin{proposition}
	\label{prop:additive}\cite[Theorem 1.4.3, p. 13, vol. 1]{Bogachev2007} Let $\mu$ be  a nonnegative additive set function  on an algebra $\Aa$. Suppose  that there exists a compact class  $\Kk$ approximating $\mu$ in the following sense: for every $A \in \Aa$ and every $\eps > 0$, there  exist $K_\eps  \in \Kk$  and $A_\eps \in \Aa$  such  that $A_\eps \subset  K_\eps \subset  A$ and $\mu (A\setminus A_\eps)< \eps$. Then $\mu$ is countably additive.  In particular  this is true  if the compact class  $\Kk$ is contained  in $\Aa$  and for any $A \in \Aa$  one has the  equality
	$$ \mu(A) = \sup_{ K \subset  A, K \in \Kk} \mu(K).$$
\end{proposition}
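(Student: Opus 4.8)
The plan is to prove Proposition \ref{prop:additive} by reducing countable additivity to continuity from above at the empty set, and then to extract that continuity from the compact approximating class by a finite-intersection argument. Since $\mu$ is a nonnegative \emph{finite} additive set function on $\Aa$ (as is implicit here and certainly holds in the application to the probability measure $\mu_\Theta$), the first, purely formal, step is to recall that continuity from above at $\emptyset$ — meaning $\mu(B_n)\to 0$ whenever $B_n\in\Aa$ decrease to $\emptyset$ — implies countable additivity: given a disjoint union $A=\sqcup_k A_k$ in $\Aa$, the tails $B_n:=A\setminus\cup_{k=1}^n A_k\in\Aa$ decrease to $\emptyset$, and passing to the limit in $\mu(A)=\sum_{k=1}^n\mu(A_k)+\mu(B_n)$ yields $\mu(A)=\sum_k\mu(A_k)$. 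It therefore suffices to establish this continuity.

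For the main step I would argue by contradiction. Suppose $B_n\in\Aa$ with $B_n\downarrow$ and $\cap_n B_n=\emptyset$, but $\mu(B_n)\geq\delta>0$ for all $n$ (the limit exists by monotonicity). Using the approximating hypothesis, for each $n$ I would choose $K_n\in\Kk$ and $A_n\in\Aa$ with $A_n\subset K_n\subset B_n$ and $\mu(B_n\setminus A_n)<\delta\,2^{-(n+1)}$, and set $C_n:=A_1\cap\cdots\cap A_n\in\Aa$. The crucial estimate is that, because the $B_n$ decrease, for $k\leq n$ one has $B_n\setminus A_k\subset B_k\setminus A_k$, whence
$$\mu(B_n\setminus C_n)\leq\sum_{k=1}^n\mu(B_k\setminus A_k)<\delta\sum_{k=1}^\infty 2^{-(k+1)}=\frac{\delta}{2}.$$
Since $C_n\subset B_n$ and $\mu$ is finitely additive, $\mu(C_n)=\mu(B_n)-\mu(B_n\setminus C_n)>\delta-\delta/2>0$, so in particular $C_n\neq\emptyset$. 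As $C_n\subset\cap_{k=1}^n K_k$, every finite intersection of the $K_k$ is nonempty. Invoking the defining property of a compact class — equivalently, that a countable subfamily all of whose finite intersections are nonempty has nonempty total intersection — gives $\cap_{n=1}^\infty K_n\neq\emptyset$. But $K_n\subset B_n$ forces $\cap_n K_n\subset\cap_n B_n=\emptyset$, a contradiction. Hence $\mu(B_n)\to 0$, and $\mu$ is countably additive.

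The step I expect to be the main obstacle is the uniform control of the accumulated approximation error: one must choose the tolerances $\delta\,2^{-(n+1)}$ summably \emph{and} simultaneously exploit the monotonicity $B_n\subset B_k$ so that each discarded piece $B_n\setminus A_k$ is dominated by the already-controlled $B_k\setminus A_k$; this is precisely what keeps $\mu(B_n\setminus C_n)$ below $\delta/2$ and thereby keeps the finite intersections of the $K_n$ nonempty, so that compactness can be applied. Finally, the ``in particular'' clause is immediate from the general case: if $\Kk\subset\Aa$ and $\mu(A)=\sup_{K\subset A,\,K\in\Kk}\mu(K)$ for every $A\in\Aa$, then given $A$ and $\eps>0$ one chooses $K\in\Kk$ with $K\subset A$ and $\mu(A\setminus K)<\eps$, and since $K\in\Aa$ one may take $A_\eps=K_\eps=K$, so the approximating hypothesis is satisfied and countable additivity follows.
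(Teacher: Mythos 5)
Your proof is correct, and it coincides with the standard argument behind this result, which the paper simply imports from Bogachev (Theorem 1.4.3, vol.\ 1) without reproving: the reduction of countable additivity to continuity from above at $\emptyset$, the summable-error choices $A_n \subset K_n \subset B_n$ with $\mu(B_n\setminus A_n)<\delta 2^{-(n+1)}$ so that the intersections $C_n=A_1\cap\cdots\cap A_n$ retain measure at least $\delta/2$, and the finite-intersection property of the compact class yielding the contradiction $\emptyset\neq\bigcap_n K_n\subset\bigcap_n B_n=\emptyset$. The only point worth flagging is that the paper's recalled definition of a compact class misprints unions for intersections (with unions the condition is vacuous); your intersection formulation is the correct one and is exactly what your argument uses.
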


\begin{proposition}\label{prop:souslin}(\cite[Theorem 7.4.3, p. 85, vol. 2]{Bogachev2007})  Every  Borel  measure $\mu$  on a Souslin space  $\Theta$ is Radon and concentrated  on a countable  union  of metrizable compact sets. In addition, for every $B \in \Bb(\Theta)$  and every  $\eps >0$ there
	exists   a metrizable compact  set $K_\eps$  such that $\mu(B \setminus K_\eps)< \eps$.
\end{proposition}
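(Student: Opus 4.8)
The plan is to exploit the defining feature of a Souslin space, namely that there is a Polish space $P$ and a continuous surjection $\pi: P \to \Theta$, and then to transfer the classical tightness of finite Borel measures on Polish spaces across $\pi$. After normalizing I may assume $\mu \in \Pp(\Theta)$. The one genuinely nontrivial input I would invoke is a measurable section: by the Jankov--von Neumann uniformization theorem there is a universally $\mu$-measurable map $g: \Theta \to P$ with $\pi \circ g = \mathrm{id}_\Theta$. Pushing forward, $\nu := g_*\mu$ is a finite Borel measure on $P$ with $\pi_*\nu = \mu$ (since $\nu(\pi^{-1}(A)) = \mu((\pi\circ g)^{-1}(A)) = \mu(A)$), so $\mu$ is realized as the image of a measure living on a Polish space.

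Next I would produce the approximating compacta. Since $P$ is Polish, $\nu$ is Radon by Ulam's theorem, so for each $\eps > 0$ there is a compact $L_\eps \subset P$ with $\nu(P \setminus L_\eps) < \eps$. I set $K_\eps := \pi(L_\eps)$, compact as a continuous image of a compact set. Because $x = \pi(g(x))$ for all $x$, one has $g^{-1}(L_\eps) \subset K_\eps$, whence $\mu(K_\eps) \ge \mu(g^{-1}(L_\eps)) = \nu(L_\eps) > 1-\eps$, so $\mu(\Theta \setminus K_\eps) < \eps$. Crucially each $K_\eps$ is metrizable: $\pi|_{L_\eps}$ is a continuous surjection from a compact metric space onto the compact Hausdorff space $K_\eps$, so $f \mapsto f \circ \pi|_{L_\eps}$ embeds $C(K_\eps)$ isometrically into the separable space $C(L_\eps)$, and separability of $C(K_\eps)$ forces $K_\eps$ to be metrizable. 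Taking $\eps = 1/n$ and forming $\bigcup_n K_{1/n}$ yields a countable union of metrizable compacta of full $\mu$-measure, which is the concentration assertion.

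Finally I would upgrade tightness to the full Radon property and to the Borel-set approximation in the statement. Fix $B \in \Bb(\Theta)$ and $\eps > 0$, and put $K := K_{\eps/2}$, a metrizable (hence Polish) compact set with $\mu(\Theta \setminus K) < \eps/2$. On $K$ every finite Borel measure is Radon, so applied to the Borel set $B \cap K$ there is a compact $C \subset B \cap K \subset B$ with $\mu((B \cap K)\setminus C) < \eps/2$; then $\mu(B \setminus C) < \eps$, and $C$ is metrizable as a closed subset of $K$. This simultaneously gives inner regularity of $\mu$ by compact sets (the Radon property) and the concluding clause, with $C$ in the role of the desired $K_\eps$.

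I expect the main obstacle to be the first step: securing the measurable section and justifying that the universally measurable $g$ may legitimately be used to transport $\mu$ to a genuine Borel measure $\nu$ on $P$ with $\pi_*\nu = \mu$. The selection theorem is nontrivial, and the bookkeeping with universal (rather than Borel) measurability, together with the attendant completion of $\mu$, must be handled carefully; once the lift is in hand, the remaining steps are routine transfers of Polish-space regularity. An alternative avoiding selection would build $K_\eps$ directly from a Souslin scheme $\Theta = \bigcup_\sigma \bigcap_k \overline{\pi(\Sigma_{\sigma|k})}$ by inductively bounding the branching $n_i \le m_i$, but then the hard point migrates to proving compactness of the resulting intersection, so the difficulty appears intrinsic rather than an artifact of the chosen route.
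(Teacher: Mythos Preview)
The paper does not prove Proposition~\ref{prop:souslin}; it merely quotes the result from Bogachev \cite[Theorem 7.4.3]{Bogachev2007} and uses it as a black box inside the proof of Proposition~\ref{prop:sigmad}. So there is no ``paper's own proof'' to compare against.

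That said, your argument is sound and is essentially one of the standard routes to this theorem. The lift of $\mu$ to a measure $\nu$ on the covering Polish space via a universally measurable section, followed by Ulam tightness and pushforward of the approximating compacta, is exactly the mechanism used in many treatments. Your metrizability step for $K_\eps=\pi(L_\eps)$ via the isometric embedding $C(K_\eps)\hookrightarrow C(L_\eps)$ is clean; the equivalence ``compact Hausdorff with separable $C(K)$ $\Leftrightarrow$ metrizable'' is the right tool here. The final upgrade from tightness to inner regularity on every Borel set, by restricting to a large compact metric $K_{\eps/2}$ and invoking Radonness there, is also correct and in fact gives the slightly stronger conclusion $K_\eps\subset B$.

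Your own diagnosis of the delicate point is accurate: the invocation of the Jankov--von Neumann uniformization requires a little care because $\Theta$ is only Souslin, not Polish. The cleanest way to make this rigorous is to use Remark~\ref{rem:altsoulsin}(2), which lets you replace $(\Theta,\Bb(\Theta))$ by a Borel-isomorphic metrizable Souslin space, embed the latter as an analytic subset of a Polish space $Z$, observe that the reversed graph $\{(\pi(x),x):x\in P\}$ is Borel in $Z\times P$ (it is the image of the continuous injection $x\mapsto(\pi(x),x)$, hence Borel by Lusin--Souslin), and then apply Jankov--von Neumann in its standard Polish-space form. The resulting section is measurable for the $\mu$-completion of $\Bb(\Theta)$, which is all you need to define $\nu=g_*\bar\mu$. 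Once you spell this out, the remaining steps are indeed routine. The alternative Souslin-scheme construction you sketch at the end is also viable and is closer to how some textbooks (including parts of Bogachev's development) organize the argument, but you are right that the compactness verification there is where the work hides.
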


\begin{proof}[Proof of Proposition \ref{prop:sigmad}]
	By Proposition \ref{prop:souslin}   there exists  a   compact class
	$\Cc$  consisting  of  compact  sets $C_{n,l}$   such  that  for all
	$n, l \in \N^+$ we have
	\begin{equation}\label{eq:bog1}
	C_{n,l}\subset  B_n  \text{ and  } \mu_\Theta (B_n \setminus C_{n,l})< 1/l.
	\end{equation}
	%\color{black}
	\begin{lemma}\label{lem:claim}  There  exists  a  measurable   subset   $S_1 \subset \Xx\setminus S_0$ of zero $\mu_\Xx$-measure such that  for  all $  n,l \in \N^+$   and  for all $x\in (\Xx \setminus (S_0 \cup S_1))$  and  we have
		\begin{equation}\label{eq:sup1}
		\tilde \mu_{\Theta|\Xx}(B_n | x) = \sup _l \tilde \mu_{\Theta|\Xx} (C_{n, l}|x) .
		\end{equation}
	\end{lemma}
	\begin{proof}  We  define  a function $q_n : \Xx \to \R$ by  setting $q_n (x)$ equal to the  RHS  of  (\ref{eq:sup1}),  if $x \in (\Xx \setminus S_0)$  and $ q_n (x) = 0$ otherwise.  Then $q_n : \Xx \to \R$ is measurable, since  $\tilde \mu_{\Theta|\Xx}(C_{n, l}| \cdot ): \Xx \to \R$  is measurable.
		Since $C_{n,l} \subset B_n$ for all $l$, we  have  for all $ x \in (\Xx \setminus S_0)$,    
		\begin{equation}\label{eq:bog2}
		q_n(x) \le \tilde \mu_{\Theta|\Xx} (B_n| x)  \text{ for all } x \in (\Xx \setminus S_0).
		\end{equation}
		Since $\mu_{\Theta|\Xx}  (C_{n,l}|x) \le q_n (x)$ for  $x \in \Xx \setminus S_0$, and  $S_0$ is a  measurable subset of  zero $\mu$-measure, taking into account (\ref{eq:integ1})  and (\ref{eq:mtxm}), we  have
		\begin{equation}\label{eq:bog3}
		((\Gamma_{\pb})_*\mu_\Theta)(C_{n,l} \times  \Xx) = \int_{\Xx} \tilde \mu_{\Theta|\Xx} (C_{n,l}|x) d\mu_\Xx \le  \int_{\Xx}  q_n (x)d(\mu_\Xx).
		\end{equation}
		Taking  into   account (\ref{eq:bog2})  and (\ref{eq:bog1}), we obtain from (\ref{eq:bog3})
		\begin{eqnarray}
		\sup _{l} ((\Gamma_{\pb})_*\mu_\Theta)(C_{n,l} \times \Xx)\le  \int_{\Xx} q_n (x)d(\mu_\Xx)\nonumber\\
		\le \int_{\Xx} \tilde\mu_{\Theta|\Xx} (B_n|x)d(\mu_\Xx)  = ((\Gamma_{\pb})_*\mu_\Theta)(B_n \times \Xx).\label{eq:bog4}
		\end{eqnarray}
		By (\ref{eq:bog1}), the far LHS  of (\ref{eq:bog4})  is equal  to the  far RHS of  (\ref{eq:bog4}). Taking into account (\ref{eq:bog2}), we conclude that   there    exists a 
		subset  $S_1\subset \Xx\setminus S_0$  of  zero $\mu_\Xx$-measure such that  $q_n (x)= \mu_{\Theta|\Xx} (x)$ for all $x \in \Xx \setminus (S_0\cup  S_1)$.  Since   both the functions  $q_n$ and
		$\mu_{\Theta|\Xx}$ are measurable, the  subset  $S_1$  is measurable. This  proves
		Lemma \ref{lem:claim}.
	\end{proof}
	
	Lemma \ref{lem:claim}  implies  that   for  all  $ x\in \Xx \setminus (S_0 \cup S_1)$  the additive
	function $\tilde \mu_{\Theta|\Xx} (\cdot |x)$  on the algebra $\Uu_\Theta$   has the  property
	that    the compact  class $\Kk_\Theta$  approximates  $p(\cdot, x)$ on
	$\Uu_\Theta$. Combining  with Proposition \ref{prop:additive}, this completes  the proof of Proposition \ref{prop:sigmad}.  
\end{proof}

Since $\tilde \mu_{\Theta|\Xx} (\cdot|x)$ is   $\sigma$-additive on $\Uu_\Theta$,  it  extends  to  a   measure  on $\Theta$.  Since $\tilde \mu_{\Theta|\Xx}(\Theta, x) = 1$  for any $x \in \Xx$  this completes  the proof of Theorem  \ref{thm:bayesm} for   the case $k =1$.

\begin{corollary}
	\label{cor:lopital}  Assume  the  conditions  of  Theorem \ref{thm:bayesm}.
	Given a point $x_0\in \Xx \setminus S$ assume that
	$\pb_\theta (x_0) = 0$ for all $ \theta \in \Theta$. If the condition for differentiation w.r.t. $r$  at $0$ under 
	the   
	integral $\int_C  \pb_\theta (D_r(x_0))d\mu_\Theta$ holds  for  $C \in \Uu_\Theta \cup \{ \Theta \}$, then  for any $B \in \Uu_\Theta$  we have
	\begin{equation}\label{eq:lim1}
	\mu_{\Theta|\Xx}(B|x_0) =   \frac{ \int _B  \frac{d}{dr}|_{r=0}\pb_\theta (D_r (x_0)) d\mu_\Theta}{\int_\Theta  \frac{d}{dr}|_{r=0}\pb_\theta (D_r (x_0)) d\mu_\Theta},
	\end{equation}
	if the dominator in the RHS of (\ref{eq:lim1})  does not vanish.
\end{corollary}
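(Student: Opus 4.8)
The plan is to deduce the corollary directly from the Main Theorem in the case $k=1$ by recognizing the limit in (\ref{eq:updatem}) as a $0/0$ indeterminate form and resolving it through the definition of the derivative at $r=0$. Throughout I abbreviate
$$F_C(r):=\int_C \pb_\theta(D_r(x_0))\,d\mu_\Theta \qquad (C\in\{B,\Theta\}),$$
so that Theorem \ref{thm:bayesm} with $k=1$, applicable since $x_0\in\Xx\setminus S$, reads $\mu_{\Theta|\Xx}(B|x_0)=\lim_{r\to 0}F_B(r)/F_\Theta(r)$.

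First I would show $\lim_{r\to 0}F_C(r)=0$. As $r\to 0$ the open balls $D_r(x_0)$ decrease to $\{x_0\}$, so continuity of the probability measure $\pb_\theta$ from above gives $\pb_\theta(D_r(x_0))\to\pb_\theta(\{x_0\})=\pb_\theta(x_0)=0$ for every $\theta\in\Theta$, where the last equality is exactly the standing hypothesis. Since $0\le\pb_\theta(D_r(x_0))\le 1$ and the constant function $1$ is $\mu_\Theta$-integrable ($\mu_\Theta$ being a probability measure), the dominated convergence theorem lets me pass the limit inside the integral, giving $F_C(0)=0$.

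Next, the assumed interchange of $d/dr$ and integration over $C\in\Uu_\Theta\cup\{\Theta\}$ yields differentiability of $F_C$ at $r=0$ with $F_C'(0)=\int_C \frac{d}{dr}|_{r=0}\pb_\theta(D_r(x_0))\,d\mu_\Theta$. Because $F_C(0)=0$, the difference quotient collapses to $F_C'(0)=\lim_{r\to 0}F_C(r)/r$. I would then observe that $F_\Theta\ge 0$, $F_\Theta(0)=0$, and the non-vanishing hypothesis together force $F_\Theta'(0)>0$, hence $F_\Theta(r)>0$ for all sufficiently small $r>0$, so the ratio is well defined near $0$. Dividing numerator and denominator of the Main-Theorem ratio by $r$ and letting $r\to 0$ gives
$$\mu_{\Theta|\Xx}(B|x_0)=\lim_{r\to 0}\frac{F_B(r)/r}{F_\Theta(r)/r}=\frac{F_B'(0)}{F_\Theta'(0)},$$
which is precisely (\ref{eq:lim1}).

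I expect the only genuine obstacle to be the justification that $F_C(0)=0$: this is exactly where the hypothesis $\pb_\theta(x_0)=0$ is consumed, and it must be combined with continuity of measure and dominated convergence to legitimize treating the quotient as a $0/0$ form. Everything else is the bookkeeping of a one-sided L'Hôpital argument at $r=0$, with the differentiation-under-the-integral hypothesis doing the work of converting each $F_C'(0)$ into the stated integral of $\frac{d}{dr}|_{r=0}\pb_\theta(D_r(x_0))$.
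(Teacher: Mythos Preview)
Your proposal is correct and is essentially the argument the paper has in mind: the paper states Corollary~\ref{cor:lopital} without proof, treating it as an immediate consequence of Theorem~\ref{thm:bayesm} via the obvious L'H\^opital-type manipulation, and your write-up supplies exactly those details (continuity of measure plus dominated convergence to get $F_C(0)=0$, then the difference-quotient identification of $F_C'(0)$).
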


\begin{remark}\label{rem:classical}   Assume that $\{ \pb_{\theta}|\: \theta \in \Theta\}$ is a  family of measures dominated  by a measure $\nu \in \Pp (\Xx)$, i.e.  for each $\theta \in \Theta$ there exists   $ f_{\Xx|\Theta} (	\cdot|\theta) \in L^1(\Xx, \nu)$ such that $\pb_{\theta} = f_{\Xx|\Theta} (\cdot|\theta) \nu$.  Then  
	\begin{equation}\label{eq:int1}
	\mu(B|x)=\int_B \frac{d\mu_{\Theta|\Xx}}{d\mu_\Theta}(\theta|x) d\mu_\Theta
	\end{equation}
	where, by  Bayes' formula   we have  \cite[Theorem 1.31, p.16]{Schervish1997} 
	\begin{equation}
	\label{eq:schervish}
	\frac{d\mu_{\Theta|\Xx}}{d\mu_\Theta}(\theta|x)= \frac{f_{\Xx| \Theta}(x|\theta)}{\int_{\Theta} f_{\Xx|\Theta}(x|t)d\mu_\Theta  (t)}
	\end{equation}	
	for $\mu_\Xx$-a.e.   $ x\in \Xx$. We  regard    both   (\ref{eq:updatem})  and (\ref{eq:schervish})   as recipes
	for computing     the  posterior    distribution $\mu(B|x)$ under different
	assumptions. 
\end{remark}

\begin{remark}\label{rem:dui}  Assume the conditions of Theorem \ref{thm:bayesm}. 
	Given $x_0 \in \Xx \setminus S$, assuming that
	$\pb_\theta (x_0) = 0$ for all $ \theta \in \Theta$,  a sufficient   condition  for       differentiation under the integral sign  for any $C \in \Uu_\Theta \cup \Theta$
	\begin{equation}\label{eq:lim2}
	\lim_{r \to 0}\frac{1}{r}\int_C \pb_\theta (D_r(x_0))\, d\mu_\Theta = \int_C \frac{d}{dr}|_{r=0}\pb_\theta (D_r (x_0))\, d\mu_\Theta
	\end{equation}
	is  the  differentiability in $r$ of   the function $\pb_\theta (D_r(x_0))$ of the variable $r$  in a neighborhood of $0 \in [0,1]$  and the existence  of a $\mu_\Theta$-measurable function $F(\theta, x_0)$ of the variable $\theta$ such that
	$|\frac{d}{dr}\pb_\theta (D_r (x_0))| \le F(\theta, x_0) $ in this  neighborhood, see e.g. \cite[Theorem 16.11, p. 213]{Jost2005},  which  is also valid for an   arbitrary  measurable  space $\Theta
	$.
\end{remark}

\underline{Step 2} {\it Completion of the proof of Theorem \ref{thm:bayesm}}.   Let $\mu_k$ denote  the joint    distribution  on  $\Theta \times  \Xx^k$  defined   by    the  Bayesian statistical model $(\Theta, \mu_\Theta, \pb ^k, \Xx^k)$.  Given   $B \in \Sigma_\Theta$,  we  denote
by $\nu_1^k$ the marginal probability measure $\mu_{\Xx^k}= (\Pi_{\Xx^k})_* (\mu_k)$  and by
$\nu_2 ^k$  the  measure  $(\Pi _{\Xx^k})_*(1_{B\times \Xx^k}\mu_k)$.
Since  $\nu_1 ^k = (\pb^k)_* \mu_\Theta$ and  $\nu_2 ^k = (\pb^k)_*(1_B\mu_\Theta)$ we  obtain
\begin{equation}\label{eq:power}
\nu_1 ^k = (\nu_1 ^1) ^k \text{ and } \nu_2^k = (\nu_2)^k.
\end{equation}
From   (\ref{eq:power}) it follows  that  the RHS  of (\ref{eq:updatem}) exists   and  equals  to  the  LHS  of  (\ref{eq:updatem}). This completes Theorem \ref{thm:bayesm}.

\

\begin{proposition}\label{prop:cons}
	For any ${\bf x}= (x_1, \cdots,   x_k )\in    (\Xx\setminus  S)^k$ 
	the set $S$  is also   of  zero   marginal  measure $\mu^{{\bf x}}_\Xx$ w.r.t.    the  updated    posterior  measure  $\mu_{\Theta|\Xx}(\cdot |{\bf x})$
\end{proposition}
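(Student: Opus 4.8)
The plan is first to interpret the updated marginal measure as the posterior predictive measure $\mu^{\bf x}_\Xx := \pb_*\bigl(\mu_{\Theta|\Xx}(\cdot|{\bf x})\bigr)$, so that by (\ref{eq:markov1}), for every Borel $A \subset \Xx$,
$$\mu^{\bf x}_\Xx(A) = \int_\Theta p_\theta(A)\, d\mu_{\Theta|\Xx}(\theta|{\bf x}),$$
in complete analogy with $\mu_\Xx = \pb_*(\mu_\Theta)$. The whole claim $\mu^{\bf x}_\Xx(S) = 0$ then reduces to the absolute continuity $\mu_{\Theta|\Xx}(\cdot|{\bf x}) \ll \mu_\Theta$, which I would read off directly from the explicit formula (\ref{eq:updatem}). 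The key point is that for any Borel $B \subset \Theta$ with $\mu_\Theta(B) = 0$ the numerator $\int_B \prod_{i=1}^k p_\theta(D_r(x_i))\, d\mu_\Theta$ is the integral of a $[0,1]$-valued function over a $\mu_\Theta$-null set, hence vanishes identically in $r$.

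Next I would control the denominator. For ${\bf x} \in (\Xx\setminus S)^k$ the right-hand side of (\ref{eq:updatem}) is, by Theorem~\ref{thm:bayesm} itself, a genuine probability measure on $\Theta$; in particular the denominator $\int_\Theta \prod_{i=1}^k p_\theta(D_r(x_i))\, d\mu_\Theta$ is strictly positive for all sufficiently small $r$. This is exactly the effect of excluding $S$: recall the convention $\overline{D}_{\nu_1}\nu_2(x) = +\infty$ whenever $\nu_1(D_r(x)) = 0$ for some $r>0$ in Proposition~\ref{prop:der1}, together with the product identity (\ref{eq:power}) that carries this positivity from each factor $x_i \notin S$ to the joint denominator on $\Xx^k$. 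Consequently, for such $B$ and ${\bf x}$ the ratio in (\ref{eq:updatem}) equals $0$ for every small $r$, so $\mu_{\Theta|\Xx}(B|{\bf x}) = 0$, which is precisely $\mu_{\Theta|\Xx}(\cdot|{\bf x}) \ll \mu_\Theta$.

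Finally I would combine this with the standing hypothesis $\mu_\Xx(S) = 0$. Since $\mu_\Xx = \pb_*(\mu_\Theta)$, we have $\int_\Theta p_\theta(S)\, d\mu_\Theta = \mu_\Xx(S) = 0$; as $\theta \mapsto p_\theta(S) = ev_S(\pb(\theta))$ is nonnegative and, by the definition of $\Sigma_w$, measurable, the set $N := \{\theta \in \Theta : p_\theta(S) > 0\}$ is measurable with $\mu_\Theta(N) = 0$. Absolute continuity then gives $\mu_{\Theta|\Xx}(N|{\bf x}) = 0$, whence
$$\mu^{\bf x}_\Xx(S) = \int_\Theta p_\theta(S)\, d\mu_{\Theta|\Xx}(\theta|{\bf x}) = \int_N p_\theta(S)\, d\mu_{\Theta|\Xx}(\theta|{\bf x}) = 0,$$
the integrand being supported on the $\mu_{\Theta|\Xx}(\cdot|{\bf x})$-null set $N$. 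The step I expect to require the most care is the denominator positivity: one must check that the construction of $S$ in the proof of Theorem~\ref{thm:bayesm} really forces $\nu_1(D_r(x_i)) > 0$ for small $r$ at each $x_i \notin S$, and that (\ref{eq:power}) transmits this to the product over the $k$ sample points; the remaining manipulations are routine null-set arguments.
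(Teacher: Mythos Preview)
Your argument is correct, and it rests on the same elementary observation the paper uses, namely that
\[
\int_\Theta p_\theta(S)\,\prod_{i=1}^k p_\theta(D_r(x_i))\,d\mu_\Theta \;\le\; \int_\Theta p_\theta(S)\,d\mu_\Theta \;=\; \mu_\Xx(S)\;=\;0
\]
for every $r>0$. The paper's proof is a two-line direct computation: it writes $\mu^{\bf x}_\Xx(S)=\int_\Theta p_\theta(S)\,d\mu_{\Theta|\Xx}(\theta|{\bf x})$, passes straight to the ratio on the right of (\ref{eq:updatem}) with the integrand $p_\theta(S)$ in place of $1_B$, and concludes $0$ because the numerator vanishes identically. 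Your route is slightly different: you first extract the absolute continuity $\mu_{\Theta|\Xx}(\cdot|{\bf x})\ll\mu_\Theta$ from (\ref{eq:updatem}) applied only to Borel sets $B$, then use the $\mu_\Theta$-null set $N=\{p_\theta(S)>0\}$ to kill the integral. This is a genuine, if minor, difference in packaging: your version uses (\ref{eq:updatem}) exactly as stated (for indicator integrands $1_B$), whereas the paper tacitly extends the limit formula to the bounded measurable integrand $\theta\mapsto p_\theta(S)$, a step that is correct here because the numerator is zero for every $r$, but which you sidestep entirely. Your concern about denominator positivity is legitimate and your reading of the convention in Proposition~\ref{prop:der1} together with (\ref{eq:power}) is the right way to settle it; the paper leaves this implicit in the phrase ``${\bf x}\in(\Xx\setminus S)^k$''.
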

\begin{proof}
	Let us compute  the    updated  marginal measure  of  $S$
	\begin{eqnarray}
	\mu ^{{\bf x}}_\Xx (S)=  (\pb_*\mu_{\Theta|\Xx}(\cdot |{\bf x})) (S)= \int_\Theta \pb_\theta (S)d	\mu_{\Theta|\Xx}(\cdot |{\bf x})\nonumber\\
	= \lim_{r\to 0} \frac{\int_\Theta \pb_\theta (S)\Pi_{i=1}^k  \pb_\theta(D_r (x_i))d\mu_\Theta}{\int_\Theta \Pi_{i=1}^k \pb_\theta (D_r(x_i))d\mu_\Theta}= 0 \label{eq:upd2}
	\end{eqnarray}
	since   for all $r$ we have
	$$	\int_\Theta \pb_\theta (S)\Pi_{i=1}^k  p_\theta(D_r (x_i))d\mu_\Theta  = 0.$$
	This completes  the proof  of   Proposition \ref{prop:cons}.
\end{proof}

Proposition  \ref{prop:cons}  implies  that     our    formula  for
posterior  distributions  is consistent   for Bayes  inference  see Remark \ref{rem:post}(2)  and \cite[p.6]{GV2017}. We postpone   a  discussion on   posterior   consistency  in   another  paper.

\begin{remark}\label{rem:connected}  We can slightly generalize 	Theorem \ref{thm:bayesm}  to the  case  of  a   finite  dimensional   complete     Riemannian manifold  with a countable  number of connected   components  by 
	setting   the  distance  between   two  points  from different  connected
	components   to be $\infty$.
\end{remark}

\begin{example}\label{ex:bayes}  Assume  that  $\Xx \subset (M, g)$
	is a  closed (or open) subset  of $(M, g)$.  Then  $\Xx$ is a   Polish space
	and $\Pp (\Xx)$  is a Polish space and hence a Souslin space.   Theorem 	\ref{thm:bayesm} implies   that       for  any   (prior)  measure
	$\mu_\Theta  \in \Pp ^2(\Xx)$  our formula (\ref{eq:updatem})  gives   a  posterior    distribution
	of $\mu_\Theta$ on $\Theta: = \Pp(\Xx)$ after  seeing  data 
	${\bf x} \in \Xx ^k$  for any $k \in \N^+$.  If
	$\dim \Xx \ge 1$  then  $\Pp(\Xx)$ is  not  a family of dominated measures  and therefore  we cannot apply     Bayes'  formula  for
	computing  the  posterior  distribution of $\mu_\Theta$  after  seeing  
	data   ${\bf x}$, unless  we have  a   constraint on $\mu_\Theta$,  e.g., $\mu_\Theta$  is a Dirichlet measure, see   \cite[Theorem 4.6, p.62]{GV2017}  for  a short  proof  of computing  the  posterior  distribution  of  Dirichlet measures.
\end{example}

\section{Dirichlet   measures revisited}\label{sec:dirichlet}
In this  section,  first we    discuss  some    functorial methods   of generating   probability measures  on $\Pp(\Xx)$. Then, by using the functorial language  of probabilistic morphisms,
we  revisit Dirichlet distributions,  which   are Dirichlet  measures  on $\Pp(\Xx)$ where  the $\Xx$ are finite  sample spaces, see   Definition \ref{def:dirichlet} and  the  remark  thereafter.  Finally  we    give  a new  proof  of  the existence   of  Dirichlet
measures over      any measurable  space by using  a  functorial  property  of the   Dirichlet map constructed by Sethuraman (Theorem \ref{thm:dir}).

\subsection{Probability measures  on  $\Pp(\Xx)$}\label{subs:p2}
There  are   several known techniques for construction  of  random measures, i.e.,  measures on $\Pp(\Xx)$, see  \cite[Chapter 3]{GV2017}  for an extensive   account.
In this  section    we  shall use  probabilistic   morphisms   for
construction  of  a  probability
measure  on $\Pp (\Xx)$.  The most natural  way  is    to look  at  a  ``simpler" measurable space  $\Xx_s$     and construct   a     probabilistic morphism $T : \Xx_s \leadsto \Xx$  together  with  a   measure  $\mu \in \Pp (\Xx_s)$,  a    measure  $\mu_\Pp \in \Pp^2 (\Xx_s)$,  and   examine if  the measures  $ P_*(\overline  T)( \mu), \, ev_P \circ  P_*^2 (\overline T)(\mu_\Pp),\, P_*^2 (T)(\mu_\Pp)  \in \Pp^2 (\Xx)$     satisfy our requirement. 
$$
\xymatrix{\Xx_s \ar@<1ex>[r]^{\delta}\ar@{~>}[d]^{T}\ar[dr]^{\overline T} &\Pp (\Xx_s) \ar@{~>} [l]^{ev}\ar@<1ex>[r]^{\delta}\ar[d]^{P_*(T)}\ar[dr]^{P_*(\overline T)} & \Pp^2 (\Xx_s) \ar[l]^{ev_P}\ar[d]^{P^2_*(T)}\ar[dr]^{P_*^2 (\overline T)} & \\
	\Xx \ar[r]^{\delta} &\Pp (\Xx)\ar@<1ex>@{~>} [l]^{ev}\ar[r]^{\delta} & \Pp^2 (\Xx) \ar@<1ex>[l]^{ev_P}\ar[r]^{\delta} &  \Pp^3(\Xx) \ar@<1ex>[l]^{ev_P}
} 
$$
By   Lemma  \ref{lem:ev2} (3), $ev_P \circ P_*^2 (\overline T)(\mu_\Pp)= P_*(\overline  T)(ev_P (\mu_\Pp))$, hence   we  need only  to look  at  $P_*(\overline T) (\mu) $ and  $P_* ^2  (T)  (\mu_\Pp)$. Both  these
constructions   are utilized  in the  proof  of  Theorem \ref{thm:dir}, where   we give an alternative  proof of   the Sethuraman  theorem.

Let us recall  that  the  $\sigma$-algebra    on $\Pp(\Xx)$  is generated
by  the      subsets  $\la  A, B^*\ra_P : = \la A, B^* \ra \cap \Pp(\Xx)$  where 
$A \in \Sigma_\Xx$ and  
$B^* \in \Bb ([0,1])$ (see the proof of Prop \ref{prop:sigmainduced}).    Hence  we   obtain the following  easy Lemma  whose proof  is omitted.

\begin{lemma}\label{lem:p2}  Any  $\mu_\Pp \in \Pp^2(\Xx)$  is determined
	uniquely   by  its  value    on the     measurable  subsets
	$\la  A_1,   B_1 ^* \ra_P  \cap  \cdots  \cap \la A_n,  B_n ^* \ra_P $  where  $A_i \in \Sigma _\Xx$, $B_i ^*  \in \Bb ([0,1])$  and 
	$n \in \N$.
\end{lemma}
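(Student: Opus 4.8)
The plan is to recognise that the finite intersections appearing in the statement form a $\pi$-system generating $\Sigma_w$, so that the standard uniqueness theorem for probability measures applies directly. Recall from the proof of Proposition \ref{prop:sigmainduced} that $\Sigma_w$ on $\Pp(\Xx)$ is generated by the sets $\la A, B^*\ra_P = \la A, B^*\ra \cap \Pp(\Xx)$ with $A \in \Sigma_\Xx$ and $B^* \in \Bb([0,1])$. Let $\Cc$ denote the collection of all finite intersections $\la A_1, B_1^*\ra_P \cap \cdots \cap \la A_n, B_n^*\ra_P$, with $A_i \in \Sigma_\Xx$, $B_i^* \in \Bb([0,1])$ and $n \in \N$.

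First I would check that $\Cc$ is a $\pi$-system. The intersection of two members of $\Cc$ is obtained by concatenating their defining lists $\la A_i, B_i^*\ra_P$, hence is again a finite intersection of the same type and so lies in $\Cc$; thus $\Cc$ is closed under finite intersections. Taking $n = 1$ shows that $\Cc$ contains every generator $\la A, B^*\ra_P$, so the $\sigma$-algebra generated by $\Cc$ is precisely $\Sigma_w$. One may also note that $\Pp(\Xx) = \la \Xx, [0,1]\ra_P \in \Cc$, since $\mu(\Xx) = 1$ for every $\mu \in \Pp(\Xx)$.

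Finally I would invoke Dynkin's $\pi$-$\lambda$ theorem in its uniqueness form: two probability measures on a measurable space that agree on a $\pi$-system generating the ambient $\sigma$-algebra must coincide on the whole $\sigma$-algebra. Applying this to $(\Pp(\Xx), \Sigma_w)$, any two elements $\mu_\Pp, \nu_\Pp \in \Pp^2(\Xx)$ that take the same values on all members of $\Cc$ satisfy $\mu_\Pp = \nu_\Pp$, which is exactly the claimed uniqueness. The argument is a routine application of a standard measure-theoretic result; the only point demanding a moment's care is the verification that $\Cc$ is closed under finite intersection, and since this is immediate from the definitions I do not anticipate any real obstacle (which is presumably why the authors omit the proof).
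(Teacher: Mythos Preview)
Your proposal is correct and is exactly the standard argument the authors have in mind: the paper explicitly omits the proof, calling the lemma ``easy'', and your $\pi$-$\lambda$ reasoning is the natural way to justify it. There is nothing to add.
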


Note that  the     collection  $\{ (A_1 \times   B_1^*)\times  \cdots  \times (A_n \times B_n^*)\}$  of measurable subsets
generates the $\sigma$-algebra  in  $\Xx_{univ} : = ( (\Xx \times [0,1])^\infty, \Sigma_{\Xx_{univ}}: =  (\Sigma _\Xx \otimes \Bb([0,1]))^\infty)$, moreover   any measure $\nu \in \Pp(\Xx_{univ})$ is determined  uniquely   by its  values  on   the subsets in this  collection.  This suggests   that  we could   take $\Xx_s : = \Xx_{univ}$ to define  a prior  measure  on $\Pp (\Xx)$ 
such that  the obtained  prior measure on $\Pp(\Xx)$ satisfies    Ferguson's required   properties: the  support  of the prior distribution should  be large and the posterior  distributions should be manageable analytically.  
A required  probabilistic morphism $\Xx_{univ} \leadsto \Xx$  has been constructed in   Sethuraman's proof  of the    existence of Dirichlet measures \cite{Sethuraman1994},  which  we  shall   revisit  at the end of this  section.

\subsection{Dirichlet distributions}\label{subs:dir}
Dirichlet distributions are most commonly used as the prior distribution over   finite  sample  spaces  in Bayesian statistics.   In this subsection, following \cite[\S 3.1.1]{GR2003}  and  \cite{Ferguson1973}, we 
recall the notion  of a Dirichlet distribution  $Dir (\alpha_1,\cdots, \alpha_k)$ on $\Delta_k = \Pp(\Om_k)$, where $\alpha:= (\alpha_1, \cdots, \alpha_k)\in \R^k_{\ge 0} \setminus \{ 0\}$  is a parameter of the distribution. Classically,
Dirichlet distributions  $Dir(\alpha)$ are defined    for
$\alpha \in \R^k_{>0}$ 
but  
Ferguson's definition of a Dirichlet  distribution  (Definition \ref{def:dir1}) extends  naturally to a  definition of a   Dirichlet measure  in   general case (Definition  \ref{def:dirichlet})  when $\Xx$ need not be finite,   while the  classical  definition doesn't.chlet  distribution  on $\Om (\alpha)$.

\begin{definition}\label{def:dir1}(cf. \cite[Definition 3.1.1, p. 89]{GR2003}, \cite[p. 211]{Ferguson1973})  Given  $\alpha \in \Mm^*(\Om_k)$ let  $\Om(\alpha) : = \{ \om _j \in  \Om_k |\, \alpha (\om_j )\not = 0\}$. Let $\pi_\alpha: \Om (\alpha)  \to  \Om_k$  denote the natural inclusion.  Let  $l(\alpha): = \#\Om(\alpha)$.
	{\it The Dirichlet  distribution  $Dir(\alpha)\in  \Pp^2 (\Om_k)= \Pp (\Delta_k)$} is the measure  $P_*^2(\pi_{\alpha}) Dir(\alpha|_{\Om (\alpha)})$,  where  $Dir (\alpha|_{\Om(\alpha)})\in \Pp^2(\Om(\alpha))$    is the classical $(l(\alpha)-1)$-dimensional Dirichlet distribution on $\Delta_{l(\alpha)}$, i.e. the distribution  with  the  following density function with respect to the $(l(\alpha)-1)$-dimensional Lebesgue measure  on $\Delta_{l(\alpha)}$:
	\begin{equation}
	\label{eq:dir1}
	Dir(\alpha|_{\Om(\alpha)})(x_{i_1}, \cdots, x_{i_{l(\alpha)}}): = 
	\frac{\Gamma (\alpha_{i_1}  + \cdots    + \alpha_{i_{l(\alpha)}})}{\Gamma (\alpha_{i_1})\cdots  \Gamma(\alpha_{i_{l (\alpha)}})}\Pi_{ j =1} ^{l(\alpha)}  x_{i_j} ^{\alpha_{i_j}-1},
	\end{equation}
	where $(x_{i_1}, \cdots  , x_{i_{l(\alpha)}}) \in \Delta_{l(\alpha)}$.
\end{definition}

We summarize  important known   properties  of Dirichlet  distributions,  which shall be needed later,  in the following

\begin{proposition}\label{prop:dirconv}  
	(1) The map  $Dir_k: (\Mm^*(\Om_k), \tau_w) \to (\Pp^2 (\Om_k), \tau_w):
	\alpha \mapsto  Dir (\alpha)$ is continuous  \cite[11(a), p. 93]{GR2003}.

	(2)    
	If $p \in \Delta_k$ is distributed  by $Dir (\alpha_1, \cdots, \alpha_k)$  then for any partition
	$A_1, \cdots,  A_l$ of $\Om_k$   the   vector\\  $ ( \sum_{\om_i \in A_1}p_i,  \sum_{\om_i \in A_2} p_i , \cdots,  \sum_{\om_ i \in A_l} p_i)$ 
	is distributed   by $Dir(\alpha  (A_1), \cdots , \alpha(A_l) )$ \cite[p. 90]{GR2003}. 
	
	(3)  Let  $\alpha \in \Mm^* (\Om_k)$ and  $(\Pp (\Om_k), Dir(\alpha),  Id_\Pp, \Om_k)$  be a Bayesian   statistical  model. 
	Then the posterior distribution   of the prior  distribution $Dir (\alpha)$ after seeing the data $x \in \Om_k$ is $Dir (\alpha  + \delta_x)$ \cite[p. 212]{Ferguson1973}, \cite[p. 92]{GR2003}.
	
\end{proposition}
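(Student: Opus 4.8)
The three assertions are classical, but each can be obtained transparently within the functorial framework; I indicate the approach for each in turn. For (1), since $\Om_k$ is finite, $\Mm^*(\Om_k)$ is identified with $\R^k_{\ge 0}\setminus\{0\}$ and $\tau_w$ with the Euclidean topology, while $\Delta_k=\Pp(\Om_k)$ is a compact subset of $\R^k$. The plan is to reduce weak convergence on the compact space $\Delta_k$ to convergence of all mixed \emph{moments}. Because $\Delta_k$ is compact, the polynomials are dense in $C(\Delta_k)$ by the Stone--Weierstrass theorem, so $Dir(\alpha^{(n)})$ converges weakly to $Dir(\alpha)$ in $(\Pp^2(\Om_k),\tau_w)$ as soon as
$$\E_{Dir(\alpha^{(n)})}\bigl[p_1^{m_1}\cdots p_k^{m_k}\bigr]\longrightarrow \E_{Dir(\alpha)}\bigl[p_1^{m_1}\cdots p_k^{m_k}\bigr]$$
for every multi-index $(m_1,\dots,m_k)\in\N^k$. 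The key point is the explicit moment formula
$$\E_{Dir(\alpha)}\bigl[p_1^{m_1}\cdots p_k^{m_k}\bigr]=\frac{\prod_{i=1}^k\prod_{j=0}^{m_i-1}(\alpha_i+j)}{\prod_{j=0}^{m-1}(\alpha_0+j)},\qquad \alpha_0:=\sum_i\alpha_i,\ m:=\sum_i m_i,$$
which one verifies on $\R^k_{>0}$ from the density (\ref{eq:dir1}) by the Dirichlet integral and which persists under the push-forward definition of $Dir(\alpha)$ on the boundary (when $\alpha_i=0$ the factor $\prod_{j=0}^{m_i-1}(\alpha_i+j)$ vanishes for $m_i\ge 1$, matching the fact that $p_i=0$ almost surely). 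The right-hand side is a rational function of $\alpha$ whose denominator never vanishes on $\R^k_{\ge 0}\setminus\{0\}$, hence continuous there, yielding the continuity of $Dir_k$. The only delicate feature is that the support of $Dir(\alpha)$ drops to a proper face as some $\alpha_i\to 0$, but the moment formulation absorbs this automatically, which is precisely why I route the argument through moments rather than through the densities directly.

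For (2), I would phrase the aggregation as a functorial identity. Let $\kappa:\Om_k\to\Om_l$ send each atom to the block $A_j$ containing it; then $P_*(\kappa):\Delta_k\to\Delta_l$ is exactly the map $p\mapsto(\sum_{\om_i\in A_1}p_i,\dots,\sum_{\om_i\in A_l}p_i)$, while $M_*(\kappa)(\alpha)=(\alpha(A_1),\dots,\alpha(A_l))$. The claim becomes the compatibility $P_*^2(\kappa)\,Dir(\alpha)=Dir\bigl(M_*(\kappa)(\alpha)\bigr)$. The cleanest proof uses the gamma representation: take independent $Y_i\sim\mathrm{Gamma}(\alpha_i,1)$ and set $p_i=Y_i/S$ with $S=\sum_i Y_i$, so that $p\sim Dir(\alpha)$; then $\sum_{\om_i\in A_j}Y_i\sim\mathrm{Gamma}(\alpha(A_j),1)$, these block sums are independent by disjointness of the $A_j$, and normalizing them by the common $S$ exhibits the aggregated vector as $Dir(M_*(\kappa)(\alpha))$. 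This functorial reformulation is exactly the compatibility of the Dirichlet construction with push-forward that will be invoked in the proof of Theorem~\ref{thm:dir}.

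For (3), this is the conjugacy of the Dirichlet family. In the Bayesian statistical model $(\Pp(\Om_k),Dir(\alpha),Id_\Pp,\Om_k)$ the likelihood of the datum $x=\om_j$ at the parameter $p\in\Delta_k$ is $\pb_p(\{\om_j\})=p_j$, so by Proposition~\ref{prop:post} (or directly by (\ref{eq:bayes2})) the posterior is the measure on $\Delta_k$ with density proportional to $p_j$ against the prior. Using (\ref{eq:dir1}),
$$p_j\,\prod_i p_i^{\alpha_i-1}=p_j^{\alpha_j}\prod_{i\ne j}p_i^{\alpha_i-1}=\prod_i p_i^{(\alpha+\delta_x)_i-1},$$
which is, up to the normalizing constant, the density of $Dir(\alpha+\delta_x)$; hence the posterior equals $Dir(\alpha+\delta_x)$. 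The one subtlety is the boundary case $\alpha_j=0$: then the marginal probability of seeing $\om_j$ is $\E_{Dir(\alpha)}[p_j]=\alpha_j/\alpha_0=0$, so $x=\om_j$ lies in a $\mu_\Xx$-null set where the posterior is defined only by convention, and the value $Dir(\alpha+\delta_x)$ is the natural continuous extension consistent with that convention. Across the three parts I expect the main obstacle to be the boundary behaviour in (1)—continuity precisely where the support of $Dir(\alpha)$ collapses to a lower-dimensional face—which is exactly what the moment formulation is engineered to handle.
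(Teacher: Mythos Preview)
The paper does not prove this proposition: it is presented as a summary of ``important known properties'' and each item simply cites the relevant pages of \cite{GR2003} and \cite{Ferguson1973}. There is therefore no proof in the paper to compare against; you have supplied arguments where the paper offers none.

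Your arguments are the standard ones and are correct. For (1), routing through the explicit moment formula and Stone--Weierstrass on the compact simplex is exactly the right device to handle the collapse of support as coordinates of $\alpha$ tend to zero; for (2), the gamma representation gives the aggregation property in one line, and your functorial reformulation $P_*^2(\kappa)\,Dir(\alpha)=Dir(M_*(\kappa)\alpha)$ is precisely the identity the paper uses in Remark~\ref{rem:functd}(1); for (3), the density computation is the textbook conjugacy argument, and your treatment of the null boundary event $\alpha_j=0$ is adequate. The only minor comment is that in (2) the gamma representation tacitly requires interpreting $\mathrm{Gamma}(0,1)$ as the Dirac mass at $0$; this is consistent with Definition~\ref{def:dir1}, which defines $Dir(\alpha)$ on the boundary as the push-forward from the face $\Delta_{l(\alpha)}$, but it is worth making explicit.
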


\begin{remark}\label{rem:functd} (1)  
	Let  $\Xx = A_1 \dot\cup \cdots \dot\cup A_n$  be a measurable  partition
	of $\Xx$ into $n$ disjoint measurable subsets.  This  partition  induces  a measurable  map $\pi: \Xx \to \Om_n := \{ A_1, \cdots, A_n\}$,  $ \pi(x): = A_i $ if $ x\in A_i$.  
	We have
	$M_*(\pi) (\alpha) = (\alpha(A_1), \cdots, \alpha(A_n))$  for any $\alpha \in \Mm(\Xx)$.  Hence
	the  assertion  (2)  in  Proposition \ref{prop:dirconv}  is   equivalent  to the commutativity of the  following diagram
	for  any $ n \ge k$   and 
	any surjective mapping  $\pi_{nk}: \Om_n \to \Om_k$
	$$
	\xymatrix{\Mm^*(\Om_n) \ar[r]^{Dir_n}\ar[d]^{M_*(\pi_{nk})} &  \Pp^2(\Om_n)\ar[d]^{P^2_*(\pi_{nk})}\\
		\Mm^*(\Om_k) \ar[r]^{Dir_k}&  \Pp^2(\Om_k).
	}$$
	
	(2)  Let  $k < l$  and  $\pi_{kl}: \Om_k \to  \Om_l$ is  an injection.  Let
	$\pi_{lk}: \Om_l \to \Om_k$   be a left inverse   of  $\pi_{kl} $.
	By the definition  of   $Dir (\alpha)$ it  is   not hard  to    see that  the following  diagram is  commutative 
	$$
	\xymatrix{\Mm^*(\Om_k)\ar[r]^{M_*(\pi_{kl})}\ar[d]^{Dir_k}& \Mm^*(\Om_l)\ar[r] ^{ M_*(\pi_{lk})}\ar[d] ^{Dir_l }&  \Mm^*(\Om_k)\ar[d]^{Dir_k}\\
		\Pp^2(\Om_k)\ar[r] ^{P^2_* (\pi_{kl})}&  \Pp^2(\Om_l)\ar[r]^{P_*^2 (\pi_{lk})} &  \Pp^2(\Om_k).
	}
	$$
	It  follows  that  for  any map $\pi_{kl}: \Om_k  \to \Om_l$  we  have
	$$ P_*^2 (\pi_{kl})\circ Dir_k = Dir_l \circ M_*(\pi_{kl}).$$
	Hence $Dir_k : \Mm ^* (\Om_k) \to \Pp^2 (\Om_k)$ is a  natural transformation of    the functor  $M_*$  to the functor  $P_*^2$  in the  category of finite sample  spaces  $\Om_k$   whose morphisms are   (measurable) mappings.

	(3)   The  converse   statement   of     Remark \ref{rem:functd}(2) is also valid, assuming  a  normalization  condition.   Namely 
	for  each $n \in \N^+$ there  exists a unique	   mapping  $Dir_n : \Mm^* (\Om_n) \to  \Pp^2 (\Om_n)$    satisfying  the following  properties.
	
	(i) Normalization:   for  $n = 2$   we have  $Dir_2(\alpha) = Dir (\alpha)$.
	
	(ii) Naturality:  for  any $ n, k\ge 1$  and any  mapping  $\pi_{nk} : \Om_n \to \Om_k$  the following diagram is commutative
	$$
	\xymatrix{\Mm^*(\Om_n) \ar[r]^{Dir_n}\ar[d]^{M_*(\pi_{nk})}&  \Pp^2(\Om_n)\ar[d]^{P^2_*(\pi_{nk})}\\
		\Mm^*(\Om_k) \ar[r]^{Dir_k}&  \Pp^2(\Om_k).
	}$$	
\end{remark}

\subsection{Dirichlet  measures}\label{subs:dirpro}
\begin{definition}\label{def:dirichlet}(cf. \cite[Definition 1, p. 214]{Ferguson1973})  Let $\Xx$ be a measurable space  and $\alpha \in \Mm^*(\Xx)$. An element $\Dd(\alpha) \in \Pp^2(\Xx)$  is called
	a {\it Dirichlet measure
		on $\Pp(\Xx)$ parameterized by $\alpha$}, if  for all  surjective  measurable  mappings $\pi_k : \Xx \to \Om_k$
	we have
	\begin{equation}\label{eq:dirichlet}
	P^2 _* (\pi_k) (\Dd(\alpha))= Dir(\alpha (\pi_k^{-1}(\om_1)), \cdots , \alpha(\pi_k^{-1}(\om_k)))\in \Pp^2(\Om_k) = \Pp(\Delta_k),
	\end{equation}
	where $Dir(\alpha_1, \cdots, \alpha_k)$  
	is the  Dirichlet distribution  with the parameter $(\alpha_1, \cdots, \alpha_k)$ on $\Delta_k$  defined in Definition \ref{def:dir1}. If $\Dd(\alpha)$ is  defined for all $\alpha \in \Mm^*$   we shall call $\Dd:\Mm^* (\Xx) \to \Pp^2 (\Xx)$ {\it a Dirichlet map}.
\end{definition}

Proposition  \ref{prop:dirconv} (2)   implies   that  $Dir(\alpha_1, \cdots, \alpha_k)$ is   a  Dirichlet  measure
on $\Pp(\Om_k)$.

\begin{theorem}\label{thm:dir}
	For any  measurable    space
	$\Xx$ there  exists  a measurable  mapping  $\Dd: \Mm^*(\Xx) \to \Pp^2 (\Xx)$ such that  $\Dd(\alpha)$  is a Dirichlet  measure  parameterized   by $\alpha$.
	Moreover  the  mapping $\Dd$ is  a  natural transformation   of the  functor $M_*$ to the functor $P_*^2$ in the
	category  of   measurable  spaces    whose  morphisms are measurable mappings.	
\end{theorem}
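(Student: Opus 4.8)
The plan is to realize $\Dd$ as a single pushforward through a ``universal'' stick-breaking map living on $\Xx_{univ}=((\Xx\times[0,1])^\infty,\Sigma_{\Xx_{univ}})$, with all dependence on $\alpha$ pushed into a product measure. First I would define a measurable map $\Phi_\Xx:\Xx_{univ}\to(\Pp(\Xx),\Sigma_w)$, independent of $\alpha$, by the Sethuraman rule: for $\xi=((\theta_i,v_i))_{i\ge 1}$ set $w_i(\xi):=v_i\prod_{j<i}(1-v_j)$ and
\[
\Phi_\Xx(\xi):=\sum_{i=1}^\infty w_i(\xi)\,\delta_{\theta_i}.
\]
Since $ev_A(\Phi_\Xx(\xi))=\sum_i w_i(\xi)1_A(\theta_i)$ is a countable sum of measurable functions of $\xi$ for each $A\in\Sigma_\Xx$, the map $\Phi_\Xx$ is $\Sigma_w$-measurable. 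Writing $M:=\alpha(\Xx)=I_{1_\Xx}(\alpha)$, $\overline\alpha:=\pi_\Xx(\alpha)=M^{-1}\alpha$, and letting $\beta_M\in\Pp([0,1])$ be the $\mathrm{Beta}(1,M)$ law, I would set
\[
\mu_\alpha:=(\overline\alpha\otimes\beta_M)^{\otimes\infty}\in\Pp(\Xx_{univ}),\qquad \Dd_\Xx(\alpha):=(\Phi_\Xx)_*\mu_\alpha=P_*(\Phi_\Xx)(\mu_\alpha)\in\Pp^2(\Xx).
\]

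For measurability of $\Dd_\Xx$ I would show that $\alpha\mapsto\mu_\alpha$ is measurable into $(\Pp(\Xx_{univ}),\Sigma_w)$. On a finite cylinder $C=\prod_{i=1}^n(A_i\times B_i^*)\times\cdots$ one has $\mu_\alpha(C)=\prod_{i=1}^n\overline\alpha(A_i)\,\beta_M(B_i^*)$, which is measurable in $\alpha$ because $\alpha\mapsto\overline\alpha(A_i)$ is measurable by Proposition \ref{prop:tauv}(1) together with $ev_{A_i}$, while $\alpha\mapsto\beta_M(B_i^*)$ is measurable since $M\mapsto\beta_M$ is weakly continuous and $M=I_{1_\Xx}(\alpha)$. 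A monotone-class argument (the family of $C$ for which $\alpha\mapsto\mu_\alpha(C)$ is measurable is closed under complements and monotone limits, by continuity of each $\mu_\alpha$ from below, and contains the generating algebra of cylinders) extends this to all of $\Sigma_{\Xx_{univ}}$; by Remark \ref{rem:sigmaw} this is exactly the desired measurability, and notably no countable-generation hypothesis on $\Sigma_\Xx$ is required. Then $\Dd_\Xx=P_*(\Phi_\Xx)\circ(\alpha\mapsto\mu_\alpha)$ is measurable by Proposition \ref{prop:markov}(2).

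Naturality I would prove directly from the construction, which makes both naturality and the Dirichlet property fall out together. Given measurable $\kappa:\Xx\to\Yy$, put $\hat\kappa:=(\kappa\times\mathrm{id}_{[0,1]})^\infty:\Xx_{univ}\to\Yy_{univ}$. Two identities hold: (i) $\Phi_\Yy\circ\hat\kappa=P_*(\kappa)\circ\Phi_\Xx$ pointwise, since both send $\xi$ to $\sum_i w_i(\xi)\delta_{\kappa(\theta_i)}$, the weights depending only on the unchanged $v_i$; and (ii) $\hat\kappa_*\mu_\alpha=\mu_{\kappa_*\alpha}$, because pushing the product forward factorwise gives $((\kappa\times\mathrm{id})_*(\overline\alpha\otimes\beta_M))^{\otimes\infty}=(\overline{\kappa_*\alpha}\otimes\beta_M)^{\otimes\infty}$, using $(\kappa_*\alpha)(\Yy)=\alpha(\Xx)=M$ and $\kappa_*\overline\alpha=\overline{\kappa_*\alpha}$. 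Combining, $P_*^2(\kappa)(\Dd_\Xx(\alpha))=(P_*(\kappa)\circ\Phi_\Xx)_*\mu_\alpha=(\Phi_\Yy\circ\hat\kappa)_*\mu_\alpha=(\Phi_\Yy)_*\mu_{\kappa_*\alpha}=\Dd_\Yy(M_*(\kappa)(\alpha))$, which is naturality. Taking $\kappa=\pi_k:\Xx\to\Om_k$ a surjective measurable partition map then yields $P_*^2(\pi_k)(\Dd_\Xx(\alpha))=\Dd_{\Om_k}(\alpha(\pi_k^{-1}\om_1),\dots,\alpha(\pi_k^{-1}\om_k))$, so the defining property \eqref{eq:dirichlet} reduces to the finite-dimensional identity $\Dd_{\Om_k}(\beta)=Dir(\beta)$.

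Thus the only genuinely analytic input is that stick-breaking on the finite space $\Om_k$ reproduces the classical Dirichlet distribution $Dir(\beta)$, which is precisely Sethuraman's theorem \cite{Sethuraman1994} and which I would invoke. The main obstacle is therefore not the functorial skeleton, which is clean as above, but the measure-theoretic bookkeeping for an arbitrary measurable space $\Xx$: verifying $\Sigma_w$-measurability of $\alpha\mapsto\mu_\alpha$ without assuming $\Sigma_\Xx$ countably generated (handled by the monotone-class argument), and confirming that $\sum_i w_i=1$ holds $\mu_\alpha$-almost surely, so that $\Dd_\Xx(\alpha)$ is genuinely supported on $\Pp(\Xx)$ rather than on sub-probability measures; off this almost-sure set $\Phi_\Xx$ may be defined arbitrarily to be everywhere $\Pp(\Xx)$-valued without affecting the pushforward.
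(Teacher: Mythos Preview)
Your proposal is correct and follows essentially the same route as the paper: both realize $\Dd$ via Sethuraman's stick-breaking on $\Xx_{univ}$, with your $\Phi_\Xx$ being the paper's $p_{univ}$, your $\mu_\alpha$ the paper's $T(\alpha)$, and your $\hat\kappa$ the paper's $\kappa_\infty$; naturality is obtained from the same two intertwining identities, and the Dirichlet property is reduced to the finite case by invoking Sethuraman. Your pointwise verification of $\Phi_\Yy\circ\hat\kappa=P_*(\kappa)\circ\Phi_\Xx$ is slightly crisper than the paper's computation through the generating sets $\langle A_i,B_i^*\rangle_P$, and your monotone-class argument for measurability of $\alpha\mapsto\mu_\alpha$ is more explicit, but the architecture is identical.
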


\begin{proof}  We shall show that the    Dirichlet  mapping $\Dd: \Mm^*(\Xx)   \to \Pp^2 (\Xx)$ constructed  by  Sethuraman \cite{Sethuraman1994}   satisfies  the  condition   in Theorem  \ref{thm:dir}(1).	
	%	following  the   exposition in \cite[p. 103]{GR2003}.   
	Let us recall the construction of $\Dd$.
	First we define  a  mapping $T: \Mm^*(\Xx) \to \Pp( \Xx_{univ})$ as   follows
	\begin{equation*}%\label{eq: Duni}
	T(\alpha):= (\pi_\Xx (\alpha)\times  beta (1, \alpha (\Xx)))^\infty \in \Pp((\Xx \times [0,1])^\infty).
	\end{equation*}
	Here, $beta$ is the usual beta distribution.
	Since  $\alpha (\Xx) >0$,   the  map  $T_1: \Mm^*(\Xx) \to \Pp^*([0,1]), \alpha \mapsto    beta (1, \alpha(\Xx))$ is a measurable map. Since  $\pi_\Xx$ is    a measurable map by Proposition \ref{prop:tauv},
	the  map  $T$  is measurable.

	Let  $\pi_i : \Xx_{univ} \to \Xx \times [0,1]$ denote   the projection  on the
	$i$-factor  of  $\Xx_{univ}$. Denote by  $j_1$  and  $j_2$  the projections  from $\Xx \times [0,1]$  to  $\Xx$ and to $[0,1]$, respectively.
	The   we define  for  $i \in \N$  and $n \in \N$  the following   measurable mappings:\\
	$\bullet$ $\theta_i: \Xx_{univ} \to \R,\,   \theta_i(x_{univ}): = j_2 \circ \pi_i (x_{univ})$,\\
	$\bullet$ $q_i: \Xx_{univ} \to \Xx, \,  q_i(x_{univ}):=   j_1 \circ \pi_i  (x_{univ})$,\\
	$\bullet$ $p_1:  \Xx_{univ} \to \R,\, p_1 (x_{univ}): =\theta_1 (x_{univ})$,   \\
	$\bullet$  $p_n : \Xx_{univ} \to \R, \,  p_n(x_{univ}): = \theta_n(x_{univ}) \Pi_{i =1}^{n-1}(1-\theta_i(x_{univ}))$,   if  $n \ge 2$,  \\
	$\bullet$ $p_{univ}: \Xx_{univ} \to \Pp (\Xx) , \
	p_{univ}(x_{univ})  : = \sum_{i=1}^\infty  p_i(x_{univ}) \cdot \delta _{q_i(x_{univ})} $\\
	$\bullet$  $\Dd: \Mm^* (\Xx) \to \Pp^2 (\Xx), \,  \Dd (\alpha): = (p_{univ})_* (T_{univ} (\alpha))$.
	
	\begin{remark}\label{rem:puniv}     It is not hard to see   that    for any $x_{univ} \in \Xx_{univ}$  we have  $ \sum_{i=1}^\infty p_i (x_{univ}) \in [ 0,1]$. Therefore
		the  image  of $p_{univ}(\Xx_{univ}) \subset \Mm(\Xx)$. Furthermore it is known  it    that  there  exists  a    measurable subset $ \Xx_{univ}^{reg}$ of   full $\mu_{univ}$-measure  for   any $\mu_{univ} =\otimes ^\infty  (\mu_\Xx \times \nu)\in \Pp(\Xx_{univ})$, where  $\mu_\Xx \in \Pp(\Xx)$ and  $\nu \in \Pp([0,1]) $ is dominated by the Lebesgue measure on $[0,1]$,  such that   the map  $p_{univ}(\Xx^{reg}) \subset  \Pp (\Xx)$, see e.g. \cite[Lemma 3.4, p. 31]{GV2017}.  Thus   the expression   $(p_{univ})_* (T(\alpha))$   should
		be  understood as  $(p_{univ})_* \circ r(\alpha_{univ})	\in \Pp ^2 (\Xx)$ where
		$r:  \Pp (\Xx_{univ}) \to \Pp (\Xx_{univ} ^{reg})$ is   the restriction and $\alpha_{univ} : = T(\alpha)$. 
	\end{remark}
	Since   the mappings $p_i$ are measurable, and  $\sum_{i=1}^\infty p_i (x_{univ}) = 1$ for $x_{univ}\in \Xx_{univ}^{reg}$, the  map  $\Dd(\alpha): \Mm^*(\Xx) \to \Pp^2_*(\Xx)$ is a  measurable  map.  
	
	Now  we     shall show   that       Sethuraman's  Dirichlet  map $\Dd$ is  a natural transformation  of the functor   $M_*$ to the functor  $P_*^2$.

	Let  $\kappa: \Xx \to \Yy$ be a measurable   mapping.  
	Denote by $\kappa_\infty: \Xx_{univ}  \to \Yy_{univ}$ the  induced measurable mapping:
	$$\kappa_\infty ((x_1, \theta_1), \cdots,  (x_\infty, \theta_\infty)): = ((\kappa(x_1), \theta_1), \cdots , (\kappa(x_\infty), \theta_\infty)) . $$
	Clearly  we have $\kappa_\infty (\Xx_{univ}^{reg} ) \subset   \Yy_{univ} ^{reg}$. 
	Let us consider  the 
	following   diagram
	\begin{equation}
	\label{eq:sth1}
	\xymatrix{
		\Mm^* (\Xx)\ar[r]^{r\circ T} \ar[d]^{ \kappa_*} &\Pp(\Xx_{univ}^{reg})\ar[r]^{(p_{univ})_*} \ar[d]^{(\kappa_\infty)_*} & \Pp^2 (\Xx)\ar[d] ^{ P^2_*(\kappa)} \\
		\Mm^*(\Yy)\ar[r]^{r\circ T} & \Pp(\Yy_{univ}^{reg})\ar[r]^{(p_{univ})_*}& \Pp^2(\Yy).
	}
	\end{equation}
	Assume that   $\alpha  \in \Mm^* (\Xx)$. Since  $\kappa_*(\alpha)(\Yy) = \alpha (\Xx)$ we  obtain
	\begin{eqnarray}
	r\circ 	T \circ \kappa_*  (\alpha) =  r \circ ((\pi_\Yy(\kappa_* \alpha) \times \beta  (1, \alpha (\Xx)))^ \infty  \nonumber \\
	= (\kappa _\infty)_* r\circ T(\alpha) \in \Pp  (\Yy_{univ}). \label{eq:funcs1}
	\end{eqnarray}
	Let $\la  A_i, B_i ^*\ra_P \in \Sigma_{\Pp (\Yy)}$. Then
	\begin{equation}\label{eq:sth2}
	(p_{univ})_* (\kappa_\infty)_*  (r(\alpha_{univ})) (\cap_i\la  A_i, B_i ^* \ra_P)= 
	r(	\alpha_{univ}) ( \kappa _\infty ^{-1} (p_{univ} ^{-1} ( \cap_i\la A_i,  B^*_i\ra_P))),
	\end{equation}
	\begin{eqnarray}
	P_*^2 (\kappa) (p_{univ})_* (r(\alpha_{univ}))(\cap_i\la A_i,  B^*_i \ra_P )=  r(\alpha_{univ})(p_{univ} ^{-1} (\kappa_*) ^{-1} (\cap_i \la A_i, B^*_i\ra_P))\nonumber\\ 
	=r(\alpha_{univ})(p_{univ} ^{-1}(\cap_i\la  \kappa ^{-1}(A_i), B^*\ra_P)).\label{eq:sth3}
	\end{eqnarray}	
	Let  $\la A, B^* \ra_P \in \Sigma_{\Pp(\Yy)}$. Then we have
	\begin{equation}\label{eq:sth4}
	p_{univ} ^{-1} (\la A, B^*\ra_P)=\{ y_{univ}\in \Yy_{univ} | \, \sum_{i=1}^\infty p_i (y_{univ}) \delta_{q_i (y_{univ})} (A)\in B^*\}.
	\end{equation}
	Hence
	\begin{eqnarray}
	\kappa _\infty ^{-1}(p_{univ} ^{-1}(\la  A,  B^* \ra_P ))= \{ x_{univ} \in \Xx_{univ}| \,  \sum_{ i =1} ^\infty  p_i  (\kappa_\infty (x_{univ})) \delta _{ q_i (\kappa_\infty (x_{univ}))}\in A\}\nonumber\\
	=\{ x_{univ}\in \Xx_{univ}|\, \sum_{i =1} ^\infty p_i (x_{univ})\delta_{q_i(x_{univ})} \in \kappa ^{-1} (A)\nonumber \\
	= p_{univ}^{-1}( \la\kappa ^{-1} (A), B \ra_P).\label{eq:sth5}
	\end{eqnarray}
	
	It follows  from (\ref{eq:sth5}), (\ref{eq:sth4}) and (\ref{eq:sth3})  that
	\begin{equation}\label{eq:sth6}
	\kappa_\infty ^{-1} ( p _{univ}  ^{-1} ( \cap _i \la   A_i, B_i^* \ra_P))
	= p_{univ}^{-1}  (\kappa_*)^{-1} (\cap _i  \la A_i, B_i ^*\ra_P).
	\end{equation}
	Taking into account Lemma \ref{lem:p2},   
	we obtain  from  (\ref{eq:sth6})  the following  identity
	\begin{equation}\label{eq:sth7}
	(p_{univ})_* (\kappa_\infty)_* ( r(\alpha _{univ}))=  P _*^2  (\kappa) (p_{univ})_*  (r(\alpha _{univ})).
	\end{equation}
	
	Hence we  deduce    from  (\ref{eq:funcs1})  and  (\ref{eq:sth7})  the naturality of the transformation $\Dd$ which  is expressed  in the following
	identity
	\begin{equation}\label{eq:funcs2}
	\Dd \circ M_* (\kappa )(\alpha) = P_* ^2  (\kappa) \circ \Dd (\alpha).
	\end{equation}
	
	As  in Remark \ref{rem:functd},   we    observe    that    the functoriality   of  the  map  $\Dd$ implies  that
	$\Dd(\alpha)$ is a Dirichlet  measure, since it is known that the restriction   of the     map $\Dd$ to the   category  of   finite  sample   spaces    is  the Dirichlet map, see e.g. \cite[\S 3.3.3]{GV2017}. (By Remark \ref{rem:functd}(3)  we need  only to show that   the Sethuraman  map   defined  on $M_*(\Om_2)$  is  the Dirichlet map).
	This completes   the proof   of Theorem \ref{thm:dir}.

\end{proof}

\begin{remark}\label{rem:dir2}
	1.	  Let  $\Theta$ denote
	$\Pp(\Xx)$. Assume that $\Dd: \Mm^*(\Xx) \to \Pp(\Theta)$ is a Dirichlet map.  In \cite[Theorem 1, p. 217]{Ferguson1973} Ferguson proved  that the posterior
	(conditional) distribution $\Dd_{\Theta|\Xx}(\cdot|x)$  with prior $\Dd(\alpha)$ is equal to  $\Dd(\alpha + \delta_x)$.

	2.
	In \cite{Ferguson1973} Ferguson  made use
	of Kolmogorov's consistency theorem.  Since   $\Pp (\R, \Sigma_w)$
	is not  a measurable  subset of $[0,1]^{\Bb (\R)}$, see e.g., \cite[p. 64]{GR2003}, the  Kolmogorov theorem does  not apply directly and  we need a more refined  technique  \cite[Theorem 3.12,  p.28]{GV2017}.
	Ferguson also suggested  a second proof of  the existence  of a  Dirichlet map, which is close  to  Sethuraman's  proof \cite{Sethuraman1994}.

	3.
	We have shown  the benefit  of the functorial language  of probabilistic morphisms in this paper. In particular, by using  the concise  functorial language of probabilistic  morphisms,    we don't need  to use   abstract  generators  of random  variables. 
\end{remark}

\section*{Acknowledgement}  The authors  would like  to   thank Tobias Fritz for discussions on probability monads, Markov category  and many useful comments on an early version of this  paper,  Lorenz Schwachh\"ofer,  Juan Pablo Vigneaux  for beneficial comments on  different  versions  of this paper, and Ulrich Menne for  alerting us  on the relation between \cite{JLT2020} and \cite[2.8.9]{Federer1969}. We appreciate Duc Hoang Luu for   Example \ref{ex:prob1} (3) and  helpful suggestions. We are grateful to the  anonymous  referee  for  critical comments and  valuable suggestions. HVL would like   to thank   Domenico Fiorenza   and XuanLong Nguyen   for helpful comments  on  the   first version    of this paper \cite{Le2019}. She     warmly thanks  JJ, TDT  and Duc Hoang Luu for  hospitality  during  her visit  to MPI MIS  Leipzig in 2019 where a  part  of this
paper  was discussed.

\bibliographystyle{acm}

\end{document}